\definecolor{verdeosc}{rgb}{0,0.6,0}
\numberwithin{equation}{section}
\newcommand{\be}{\begin{equation}}
\newcommand{\ee}{\end{equation}}
\DeclareMathOperator{\Tail}{Tail}
\def\r{\mathbb R}
\newcommand{\red}[1]{{\color{red}#1}}
\newcommand{\pa}{\partial}
\newcommand{\ve}{\varepsilon}
\newcommand{\R}{\mathbb{R}}
\newcommand{\vp}{\varphi}
\newcommand{\D}{\Delta}
\newcommand{\LL}{\mathcal{L}}
\numberwithin{equation}{section}
\newtheorem{lemma}{Lemma}[section]
\newtheorem{theorem}[lemma]{Theorem}
\newtheorem{proposition}[lemma]{Proposition}
\theoremstyle{remark}
\newtheorem{remark}[lemma]{Remark}
\begin{document}

\title{The limiting case of the fractional Caffarelli-Kohn-Nirenberg inequality in dimension one}

\author[M.d.M. Gonz\'alez]{Mar\'ia del Mar Gonz\'alez}

\address{Mar\'ia del Mar Gonz\'alez
\hfill\break\indent
Universidad Aut\'onoma de Madrid
\hfill\break\indent
Departamento de Matem\'aticas, and ICMAT.  28049 Madrid, Spain}
\email{mariamar.gonzalezn@uam.es}

\author[A. Hyder]{Ali Hyder}

\address{Ali Hyder
\hfill\break\indent
TIFR Centre for Applicable Mathematics, Sharadanagar, Yelahanka New Town, Bangalore 560065, India}
\email{hyder@tifrbng.res.in}

\author[M. S\'aez]{Mariel S\'aez}

\address{Mariel S\'aez
\hfill\break\indent Facultad de Matem\'atica, P. Universidad Cat\'olica de Chile,
\hfill\break\indent Av. Vicu\~na Mackenna 4860, 690444 Santiago, Chile. }
\email{\tt mariel@mat.uc.cl}

\maketitle

\begin{abstract}

 In this paper we study the fractional Caffarelli-Kohn-Nirenberg inequality (CKN)  in one dimension when the parameter $\gamma$ converges (from the left) to its critical value $1/2$,  obtaining   Onofri's inequality in the unit disk as the limit.  A  difficulty that we encounter is the lack of an explicit expression for the extremal function at which the CKN inequality is attained, which we address by studying  solutions of the weighted Liouville equation for the half-Laplacian in dimension one.

\end{abstract}

\section{Introduction and statement of the results}

Let $n\geq 2$ and $\gamma\in(0,1)$, or $n=1$ and $\gamma\in(0,\frac{1}{2})$. We fix  $\alpha,\beta\in\r$ satisfying
\begin{equation}\label{parameter}
\alpha\leq \beta\leq \alpha+\gamma, \quad -2\gamma<\alpha<\frac{n-2\gamma}{2},
\end{equation}
and set
\begin{equation}
p=\frac{2n}{n-2\gamma+2(\beta-\alpha)}. \label{pdef}
\end{equation}
The fractional Caffarelli-Kohn-Nirenberg inequality (CKN)  generalizes the classical CKN of \cite{CaffarelliKohnNirenberg} to fractional norms with homogeneous weights. It states that there exists a constant $\Lambda>0$ such that
\begin{equation}\label{ineq_u}
{\Lambda} \left(\int_{\r^n}\frac{|u(x)|^{p}}{|x|^{{\beta} {p}}}\,dx\right)^{\frac{2}{p}}\leq
\int_{\r^n}\int_{\r^n}\frac{(u(x)-u(y))^2}{|x-y|^{n+2\gamma}|x|^{{\alpha}}|y|^{{\alpha}}}\,dy\,dx
\end{equation}
for every $u\in C^{\infty}_c (\mathbb R^n)$.

 There has been extensive work regarding existence and properties of functions that satisfy equality above (we refer to them as {\it extremal solutions}),
see  for instance the  results in \cite{Frank-Lieb-Seiringer,Ghoussoub-Shakerian,Dipierro-Montoro-Peral-Sciunzi,Nguyen-Squassina,Musina-Nazarov1,Musina-Nazarov2,Chen-Lu-Zhang,Ao-DelaTorre-Gonzalez,deNitti-Glaudo-Konig} (this list is not  exhaustive).

Before we present our results let us introduce some notations. Consider   the energy functional
\begin{equation}\label{defi-E}
E_{\gamma,\alpha,\beta}[u]=\frac{\|u\|_{\gamma,\alpha}^2}{\Big(\int_{\r^n}|x|^{-\beta p}|u|^p\,dx\Big)^{2/p}},
\end{equation}
where we have defined the weighted norm
\begin{equation}\label{weighted-norm}
\|u\|_{\gamma,\alpha}^2:=
\int_{\r^n}\int_{\r^n}\frac{(u(x)-u(y))^2}{|x-y|^{n+2\gamma}|x|^{{\alpha}}|y|^{{\alpha}}}\,dy\,dx.
\end{equation}
We set
\begin{equation}\label{eq-S}
S_\gamma(\alpha,\beta):=\inf_{u\in D_{\gamma,\alpha}(\r^n)\setminus \{0\}}E_{\gamma,\alpha,\beta}[u]
\end{equation}
to be the best constant in  inequality \eqref{ineq_u}. Here the function space $D_{\gamma,\alpha}(\mathbb R^n)$ is defined in Section \ref{section:function-spaces} in terms of the norm \eqref{weighted-norm}.
We also consider the operator
\begin{equation}\label{formula-L}
\mathcal L_{\gamma,\alpha}u(x):=\textrm{PV}\int_{\r^n}\frac{u(x)-u(y)}{|x-y|^{n+2\gamma}|x|^{\alpha}|y|^{\alpha}}\,dy.
\end{equation}
Then extremal solutions for \eqref{eq-S}  are (weak) solutions of the  Euler-Lagrange equation
\begin{equation}
\mathcal L_{\gamma,\alpha}u(x)
=c\frac{|u(x)|^{p-2}u(x)}{|x|^{\beta p}}, \label{eq-extremal}
\end{equation}
for some constant $c$ which acts as a Lagrange multiplier. The existence and regularity of  minimizers of \eqref{ineq_u} and solutions to \eqref{eq-extremal}, has been addressed by several authors (see the references above). \\

Note that  \eqref{ineq_u} can be understood as a critical fractional Hardy-Sobolev inequality (a good reference is the recent book \cite{Peral-Soria}). Indeed, by the change $\tilde u=|x|^{-\alpha}u$, it is equivalent to consider the energy functional (see Section \ref{section:equivalent} for details)
\begin{equation}\label{ground-state-representation}
\tilde E_{\gamma,\alpha,\beta}[\tilde u]=\frac{\displaystyle\int_{\mathbb R^n} \int_{\mathbb R^n} \frac{(\tilde u(x)-\tilde u(y))^2}{|x-y|^{n+2\gamma}}\,dx\,dy+ C_{\gamma,\alpha}\int_{\mathbb R^n} \frac{\tilde u^2(x)}{|x|^{2\gamma}} \, dx}
{\displaystyle \int_{\mathbb R^n} \frac{|\tilde u(x)|^p }{|x|^{(\beta-\alpha) p}}\,dx}.
\end{equation}
This equivalence is known as the \emph{ground state representation} of Frank-Lieb-Seiringer \cite{Frank-Lieb-Seiringer}. In particular,   $\mathcal L_{\gamma,\alpha}$ is just a (critical) Hardy type operator with fractional Laplacian
\begin{equation}
|x|^{\alpha}\mathcal L_{\gamma,\alpha}u= (\varsigma_\gamma)^{-1} (-\Delta)^\gamma \tilde u +
\frac{C_{\gamma,\alpha}}{|x|^{2\gamma}}\tilde u,
\label{rescaling L}\end{equation}
for some constants  $\varsigma_\gamma$ (given by \eqref{varsigma}) and $C_{\gamma,\alpha}$ defined precisely in Section \ref{section:equivalent}.
For simplicity, we define the new constant
$$c_{\text{CKN}}:=-\varsigma_\gamma C_{\gamma,\alpha}.$$
In  \cite{Ghoussoub-Shakerian} the authors prove existence of an extremal solution for $\tilde E_{\gamma,\alpha,\beta}$ as long as
\begin{equation}\label{range-existence}
c_{\text{CKN}}<c_{\text{H}},
\end{equation}
where $c_{\text{H}}$ is the Hardy constant, defined in \eqref{c-Hardy}.

In addition,  a symmetrization argument from \cite{Ghoussoub-Shakerian} shows that $\tilde{u}$ is radial and decreasing if $ C_{\gamma,\alpha}<0$ (or equivalently, even  and decreasing for $x\geq 0$ if  $n=1$).
Another reference on the subject is \cite{Dipierro-Montoro-Peral-Sciunzi}, where they prove that $\tilde u\sim |x|^{-\alpha}$ as $|x|\to 0$. The separation of the region where solutions are radially symmetric from the symmetry breaking region, and the first properties of a Felli-Schneider type curve were considered in \cite{Ao-DelaTorre-Gonzalez}. \\

These results can be
 summarized by the following theorem (see Theorems 1.1 and 1.2 in \cite{Ghoussoub-Shakerian} and Proposition 4.7 in \cite{Ao-DelaTorre-Gonzalez}).

\begin{theorem}\label{theorem:properties}
Assume that $0<\gamma<1$, $0\leq p(\beta-\alpha)< 2 \gamma<n$ and
$c_{\text{CKN}}<c_{\text{H}}$. Then:
\begin{enumerate}
\item If either $\{\beta>\alpha\}$ or $\{\beta=\alpha\hbox{ and }   c_{\text{CKN}}\geq 0\}$  then the infimum of
$\tilde E_{\gamma,\alpha,\beta}[\cdot]$ is attained.
\item If $\beta=\alpha \hbox{ and }   c_{\text{CKN}}< 0$ then  there are no extremals for $\tilde E_{\gamma,\alpha,\beta}[\cdot]$ .
\item If $0< c_{\text{CKN}}$ or  $\{0=c_{\text{CKN}} \hbox{ and } 0<(\beta-\alpha)p<2 \gamma\}$ then any non-negative minimizer for $\tilde E_{\gamma,\alpha,\beta}[\cdot]$ is positive, radially symmetric, radially decreasing and approaches 0 as $|x|\to \infty$.
\item If the infimum of  $\tilde E_{\gamma,\alpha,\beta}[\cdot]$ is attained  then the extremal solution is strictly positive.
\end{enumerate}

\end{theorem}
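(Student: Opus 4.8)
The plan is to move to the ground state representation \eqref{ground-state-representation} and argue variationally. Writing $\tilde u=|x|^{-\alpha}u$ and $q:=(\beta-\alpha)p\in[0,2\gamma)$, the statements all concern the quotient $\tilde E_{\gamma,\alpha,\beta}[\tilde u]$, whose numerator I abbreviate
\[
\mathcal Q[\tilde u]:=\|(-\Delta)^{\gamma/2}\tilde u\|_{L^2(\r^n)}^2+C_{\gamma,\alpha}\int_{\r^n}|x|^{-2\gamma}\tilde u^2\,dx,
\]
and whose denominator is $\big(\int_{\r^n}|x|^{-q}|\tilde u|^p\,dx\big)^{2/p}$. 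The first remark is that the hypothesis $c_{\text{CKN}}<c_{\text{H}}$ is precisely the statement that, by the fractional Hardy inequality, $\mathcal Q$ is comparable to $\|(-\Delta)^{\gamma/2}\cdot\|_{L^2}^2$ and hence coercive on $\dot H^\gamma(\r^n)$, so $S_\gamma(\alpha,\beta)>0$ and minimizing sequences are bounded. The second remark is that $\mathcal Q$ and the denominator are both invariant under the dilations $\tilde u\mapsto\lambda^{(n-2\gamma)/2}\tilde u(\lambda\,\cdot)$ while the two weights break translation invariance, so a minimizing sequence can fail to be relatively compact (modulo rescaling) only by concentrating or spreading near one of the two scale-invariant points $x=0$ or $x=\infty$; the whole argument is a case analysis of these two channels via a fractional version of Lions' concentration--compactness lemma.

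First I would dispose of the critical case $\beta=\alpha$, where $p=2n/(n-2\gamma)$, $q=0$, and the natural reference level is the best pure fractional Sobolev constant $S^\ast:=S_\gamma(0,0)$. If $C_{\gamma,\alpha}=0$ there is nothing to prove: $\tilde E_{\gamma,\alpha,\beta}$ is the fractional Sobolev quotient and the Aubin--Talenti type bubbles are extremal. If $C_{\gamma,\alpha}<0$ (equivalently $c_{\text{CKN}}>0$) I would test $\tilde E_{\gamma,\alpha,\beta}$ with a bubble centred at the origin: the Hardy term contributes a strictly negative amount, so $S_\gamma(\alpha,\alpha)<S^\ast$. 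Then in the concentration--compactness analysis the only non-compact channel is a Sobolev bubble splitting off at a point away from the origin (or at infinity), which costs at least the Dirichlet energy $S^\ast$ times (its share of the mass)${}^{2/p}$; since $S_\gamma(\alpha,\alpha)<S^\ast$, together with the elementary inequality $\theta^{2/p}+(1-\theta)^{2/p}>1$ for $\theta\in(0,1)$, such a split would push the energy strictly above the infimum, a contradiction; the remaining concentration at the origin is removed by rescaling, so the infimum is attained. This gives (1) when $\beta=\alpha$. For (2), when $C_{\gamma,\alpha}>0$ ($c_{\text{CKN}}<0$), the bound $\mathcal Q[\tilde u]\geq\|(-\Delta)^{\gamma/2}\tilde u\|_{L^2}^2$ yields $S_\gamma(\alpha,\alpha)\geq S^\ast$, while testing with a Sobolev extremal translated to a point with $|x_0|\to\infty$ sends the Hardy term to $0$ and gives $S_\gamma(\alpha,\alpha)\leq S^\ast$; hence equality, and any nonzero extremal would be forced to satisfy $\int_{\r^n}|x|^{-2\gamma}\tilde u^2=0$, which is impossible.

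Next, for $\beta>\alpha$ one has the subcritical exponent $p<2n/(n-2\gamma)$ together with the singular weight $|x|^{-q}$, $0<q<2\gamma$ --- a fractional Hardy--Sobolev--Maz'ya quotient. Now a bubble escaping to infinity has vanishing denominator, so that channel is harmless; a bubble concentrating at an interior point $x_0\in(0,\infty)$ is excluded by the (subcritical, hence compact) Rellich embedding on annuli; so the only concentration is at $x=0$, where the restriction of $\tilde E_{\gamma,\alpha,\beta}$ to bubbles at the origin is again scale-invariant and the loss is removed by rescaling (a double bubble at the origin being excluded by the same $\theta^{2/p}+(1-\theta)^{2/p}>1$ inequality). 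This produces a minimizer and completes (1). For the symmetry statement (3), assuming $C_{\gamma,\alpha}<0$, or $C_{\gamma,\alpha}=0$ with $0<q<2\gamma$, I would argue by symmetric decreasing rearrangement: the fractional P\'olya--Szeg\H{o} inequality says the Dirichlet energy does not increase, the Hardy term enters $\mathcal Q$ with a sign that makes rearrangement decrease it (respectively, when $q>0$, the weight $|x|^{-q}$ in the denominator strictly increases), and the denominator $\int|x|^{-q}|\tilde u|^p$ does not decrease; so a nonnegative minimizer must coincide with its own rearrangement, i.e.\ is radial and radially decreasing, and the Euler--Lagrange equation together with $L^p$ integrability and fractional Harnack estimates gives $\tilde u\to0$ as $|x|\to\infty$. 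Finally for (4): replacing $\tilde u$ by $|\tilde u|$ (which does not raise $\mathcal Q$, by $(|\tilde u(x)|-|\tilde u(y)|)^2\leq(\tilde u(x)-\tilde u(y))^2$, and leaves the denominator unchanged) I may assume $\tilde u\geq0$; it solves $(-\Delta)^\gamma\tilde u+C_{\gamma,\alpha}|x|^{-2\gamma}\tilde u=c\,|x|^{-q}\tilde u^{p-1}$ weakly, so away from the origin it is a nonnegative, not identically zero supersolution of a fractional equation with locally bounded potential, and the nonlocal strong maximum principle forces $\tilde u>0$; near the origin I would combine this with the asymptotics $\tilde u\sim|x|^{-\alpha}$ from \cite{Dipierro-Montoro-Peral-Sciunzi} to conclude strict positivity there as well.

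The hard part will be the concentration--compactness step underlying the existence assertions in (1): setting up Lions' lemma in this weighted, nonlocal framework and, above all, verifying the strict ``energy below the first threshold'' inequality that excludes dichotomy. That is exactly where the coercivity hypothesis $c_{\text{CKN}}<c_{\text{H}}$ and, in the critical case, the sign of $C_{\gamma,\alpha}$ are used in an essential way; it is the technical heart of \cite{Ghoussoub-Shakerian}. By comparison, the rearrangement inequalities behind (3) are classical in spirit but require the fractional P\'olya--Szeg\H{o} inequality, and the positivity in (4) is delicate only at the singular point $x=0$.
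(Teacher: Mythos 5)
The paper does not prove Theorem~\ref{theorem:properties}; it is introduced explicitly as a summary of results quoted from \cite{Ghoussoub-Shakerian} (Theorems~1.1 and 1.2) and \cite{Ao-DelaTorre-Gonzalez} (Proposition~4.7), so there is no internal argument in the paper to compare your sketch against. Your outline does reproduce the strategy of \cite{Ghoussoub-Shakerian}: pass to the ground state representation, obtain coercivity from $c_{\text{CKN}}<c_{\text{H}}$, run a weighted concentration--compactness argument with a strict energy-threshold inequality (and the strict subadditivity $\theta^{2/p}+(1-\theta)^{2/p}>1$ for $\theta\in(0,1)$) to rule out dichotomy, use Schwarz symmetrization for~(3), and a nonlocal strong maximum principle for~(4); you also correctly identify the concentration--compactness step as the technical heart, which is where the hypothesis $c_{\text{CKN}}<c_{\text{H}}$ and, in the critical case $\beta=\alpha$, the sign of $C_{\gamma,\alpha}$ actually enter.

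Two points to be careful about if you were to carry the sketch out in full. For~(3), your argument shows only that the symmetrically decreasing rearrangement of a nonnegative minimizer is again a minimizer; to conclude that the minimizer itself \emph{equals} its rearrangement a.e.\ you need the equality cases of the fractional P\'olya--Szeg\H{o} and Riesz/Hardy--Littlewood inequalities, not merely the one-sided monotonicity you quote, and it is precisely the strictness of the weights $|x|^{-2\gamma}$ (when $C_{\gamma,\alpha}<0$) and $|x|^{-q}$ (when $q>0$) that supplies this. For~(4), when $C_{\gamma,\alpha}>0$ the zeroth-order coefficient $|x|^{-2\gamma}$ is not locally bounded near the origin, so the maximum-principle argument you describe away from $x=0$ does not by itself give positivity at the singular point; invoking the boundary asymptotics $\tilde u\sim|x|^{-\alpha}$ of \cite{Dipierro-Montoro-Peral-Sciunzi} there is genuinely needed, as you remark, and is not a cosmetic addition.
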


In this work we are interested in the case $n=1$, more precisely
we will always assume that
$$n=1, \quad\gamma\in\big(0,\frac{1}{2}\big),\quad 0<\alpha<\frac{1-2\gamma}{2},\quad\alpha<\beta<\alpha+\gamma.$$

\begin{remark}\label{remark:intro}
The results described by Theorem \ref{theorem:properties}  are also valid for $n=1$. In addition, it is not difficult to verify that $0<c_{\text{CKN}}<c_{\text{H}}$ in our range of parameters
(see the discussion in Section \ref{section:equivalent}). Then, a minimizer exists, and it is positive, radially symmetric and decreasing in the radial variable.
\end{remark}

The value $\gamma=\tfrac{1}{2}$ is critical for  \eqref{ineq_u} in dimension one  (see, for instance, the related reference \cite{Nguyen-Squassina}). In this paper we study this limiting case obtaining, in a suitable scale, a generalized Onofri's inequality.

Before we give the statement of our main theorem, let us  recall the work of  \cite{DET}, where they recover the classical Moser-Trudinger-Onofri's inequality as the liming case of the Caffarelli-Kohn-Nirenberg inequality  for the Laplace operator $(\gamma=1)$ in dimension $n=2$ as  $\alpha,\beta\to 0$ (see also the survey paper \cite{Dolbeault-Esteban-Jankowiak}). The classical CKN inequality in dimension $n=2$ reads
\begin{equation}\label{ineq-class}
{\Lambda_{\alpha,\beta}} \left(\int_{\r^2}\frac{|u(x)|^{p}}{|x|^{{\beta} {p}}}\,dx\right)^{\frac{2}{p}}\leq
\int_{\r^n}\frac{|\nabla u(x)|^2}{|x|^{{2\alpha}}}\,dx.
\end{equation}
There is extensive work on this inequality, including fundamental symmetry and symmetry breaking results (let us mention \cite{Felli-Schneider,Dolbeault-Esteban-Loss}, although by no means we are trying to be exhaustive).
Fix $-1<a< 0$ and choose the parameters
\begin{equation*}
p_\ve=\frac{2}{\varepsilon}, \quad \alpha_\varepsilon=-\frac{\varepsilon}{1-\varepsilon}(a+1),\quad \beta_\varepsilon=\alpha_\varepsilon+\varepsilon,
\quad\text{as}\quad \varepsilon\downarrow 0.
\end{equation*}
With this choice we  are in the symmetry region \cite[Theorem 1.2]{DET} so that the ``bubble" function
\begin{equation}\label{bubble}
u_\varepsilon(x):=\left(1+|x|^{-\frac{2\alpha_\ve(1+\alpha_\ve-\beta_\varepsilon)}{\beta_\ve-\alpha_\ve}}\right)
^{-\frac{\beta_\ve-\alpha_\ve}{1+\alpha_\ve-\beta_\ve}}
\end{equation}
is an extremal solution for \eqref{ineq-class}. Now define the measure
\begin{equation*}
d\nu_a=\frac{a+1}{\pi} \frac{|x|^{2a}}{(1+|x|^{2(a+1)})^2}\,dx
\end{equation*}
and set
\begin{equation*}
w_\ve=(1+\ve v)u_\ve\quad\text{for any}\quad v\in L^1(\mathbb R^2,d\nu_a),\text{ with }|\nabla v|\in L^2(\mathbb R^2,dx).
\end{equation*}
Then, substituting $w_\ve$ in the inequality \eqref{ineq-class}
 and passing to the limit as $\ve\downarrow 0$, one obtains
\begin{equation}\label{classical-Onofri}
 \log\int_{\mathbb R^2} e^v\,d\nu_a \leq \frac{1}{16 \pi (a+1)}\int_{\mathbb R^2} |\nabla v|^2\,dx+\int_{\mathbb R^2} v\,d\nu_a,
\end{equation}
This is a generalization of Onofri's inequality
\begin{equation*}
\int_{\mathbb R^2} e^{v-\int_{ \mathbb R^2} v\,d\nu_0}\,d\nu_0\leq e^{\frac{1}{16\pi}\int_{\mathbb R^2}|\nabla v|^2\,dx},
\end{equation*}
which corresponds to the classical Moser-Trudinger-Onofri inequality on $\mathbb S^2$ after stereographic projection.\\

Our main result recovers, at the limit, the ``boundary version" of  Onofri's  inequality on the unit disk  $D_1$  in $\mathbb R^2$, which reads as follows \cite{Osgood-Phillips-Sarnak}: for every $w\in W^{1,2}(D_1)$  it holds that
\begin{equation}\label{onofri}
\log\int_{\mathbb S^1} e^w \,\frac{d\theta}{2\pi}
\leq\frac14\int_{D_1}|\nabla w|^2 \,\frac{dx}{\pi}+\int_{\mathbb S^1}w\,\frac{d\theta}{2\pi}.
\end{equation}
This inequality  has appeared in many contexts. Indeed, it is the first Lebedev-Milin inequality \cite[Chapter 5]{Duren}, which is an important ingredient in the proof of Bieberach's conjecture (a historical account may be found in  \cite{Korevaar}). Also, improvements under constraints can be obtained from the work in \cite{Grenander-Szego} on Toeplitz
determinants (see the observation in \cite{Widom}).  From a more geometric point of view, the inequality was proven in \cite{Osgood-Phillips-Sarnak} in relation to extremals of functional determinants on manifolds with boundary (for which, in the simply connected case, the optimal is attained on the disk).

It is possible to rewrite \eqref{onofri} in terms of the half-Laplacian on $\mathbb R$ using stereographic projection. More precisely, for a given function $u\in H^\frac12(\pa D_1)$, we consider its harmonic extension in $D_1$, still denoted  by the same notation $u$. Then, recalling the definition of the half-Laplacian on $\mathbb S^1$ (see, for instance, \cite{DMR}), it holds that
$$\int_{D_1}|\nabla u|^2 \,dx=\int_{\mathbb S^1} u(-\Delta_{\mathbb S^1} )^\frac12 u \,d\theta.$$
After stereographic projection $\Pi:\mathbb S^1\to \mathbb R$, we can write \eqref{onofri} as
\begin{equation}\label{onofri-stereographic}
\log\frac{1}{2\pi}\int_{\mathbb R} e^v \,dm
\leq\frac{1}{4\pi}\int_{\mathbb R}v(-\Delta_{\mathbb R} )^{\frac{1}{2}} v\,dx+\frac{1}{2\pi}\int_{\mathbb R}v\,dm,
\end{equation}
where the measure is given by
\begin{equation*}
dm=\frac{2}{1+x^2}\,dx
\end{equation*}
and $v(x)=u(\Pi^{-1}(x))$. Here we have used that the half-Laplacian is a conformal operator in dimension one.
\\

As we approach the critical  exponent $\gamma \to (\frac{1}{2})^-$,  our main theorem shows that Onofri's inequality \eqref{onofri-stereographic} can be obtained as limit from the fractional Caffarelli-Kohn-Nirenberg inequality \eqref{ineq_u}  in dimension $n=1$:

\begin{theorem}\label{main-theorem} Let $n=1$ and choose a sequence of parameters
\begin{equation}\label{choice-parameters}
 \gamma_\ve\to \left(\tfrac{1}{2}\right)^-,\quad \beta_\ve\to0^+,\quad0<\alpha_\ve<\min\left\{\beta_\ve,  \frac{1-2\gamma_\ve}{2}\right\} , \quad \text{as} \quad\ve\downarrow 0,\quad\text{(in particular }p_\ve\to \infty),
 \end{equation}
for which a (positive) radially symmetric minimizer $u_\ve$ exists.
Suppose that
\begin{equation*}
  \ve p_\ve\to a\in(0,\infty),\quad \beta_\ve p_\ve\to b \in[0,1).
  \end{equation*}  Up to a suitable normalization we assume that $u_\ve$ satisfies
  \begin{align}\label{eq-uepsilon-introduction}   \LL_\ve u_\ve=\frac{1}{p_\ve}  \frac{1}{|x|^{\beta_\ve p_\ve}} (u_\ve)^{p_\ve-1},\quad   \max_{\overline {B_1}}u_\ve =1.\end{align}
Then, the function
\begin{equation*}\label{eta-epsilon-introduction}
\eta_\ve:=p_\ve(u_\ve-1)
\end{equation*}
converges  (up to a subsequence) in  $C^0_{loc}(\R)$  to $ \eta_0$  as $\ve\downarrow 0$. Moreover, if we define
\begin{equation}
\label{shift}\eta:=\eta_0+\log\zeta_\frac12,
\end{equation}
then $\eta\in L^1_\frac 12(\R)$ (this space will be defined in \eqref{defL1q}) and $\eta$ is a solution to
\begin{equation}\label{eq-eta-introduction}(-\D)^\frac12\eta=|x|^{-b} e^\eta\quad\text{in }\R,\qquad \text{and}\qquad \kappa_b:= \int_\R |x|^{-b} e^\eta \,dx=2 \pi(1-b)<\infty.
\end{equation}
In addition, if we denote
\begin{equation*}
dm_b=\frac{e^\eta}{|x|^b}\,dx,
\end{equation*}
then we have
\begin{equation}\label{conclusion}
\log\left(\frac{1}{\kappa_b} \int_{\mathbb R} e^{v}\,dm_b\right)
\leq \frac{1}{2\kappa_b}  \int_{\mathbb R} v(-\Delta_{\mathbb R})^{\frac{1}{2}}v \,dx  +\frac{1}{\kappa_b} \int_{\mathbb R}  v\,dm_b, \end{equation}
for every $v\in C^\infty_0(\mathbb R)$.
\end{theorem}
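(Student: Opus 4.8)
The plan is to obtain the inequality \eqref{conclusion} as a limit of the fractional CKN inequality \eqref{ineq_u}, mimicking at the critical scale the strategy of \cite{DET} but working around the absence of a closed-form extremal. First I would use the rescaled Euler--Lagrange equation \eqref{eq-uepsilon-introduction} together with the ground state representation \eqref{ground-state-representation}--\eqref{rescaling L}: after the substitution $\tilde u_\ve = |x|^{-\alpha_\ve} u_\ve$, the operator $\mathcal L_{\gamma_\ve,\alpha_\ve}$ becomes $(\varsigma_{\gamma_\ve})^{-1}(-\Delta)^{\gamma_\ve} + C_{\gamma_\ve,\alpha_\ve}|x|^{-2\gamma_\ve}$, and as $\ve\downarrow 0$ one has $\gamma_\ve\to \tfrac12$, $\alpha_\ve\to 0$, $\beta_\ve\to 0$, so $C_{\gamma_\ve,\alpha_\ve}\to 0$; the constant $\varsigma_{\gamma_\ve}$ converges to the normalizing constant of $(-\Delta)^{1/2}$ on $\mathbb R$. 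With $\eta_\ve = p_\ve(u_\ve-1)$, a Taylor expansion $u_\ve^{p_\ve-1} = (1+\eta_\ve/p_\ve)^{p_\ve-1} \to e^{\eta_0}$ turns the right-hand side of \eqref{eq-uepsilon-introduction}, multiplied by $p_\ve$, into $|x|^{-\beta_\ve p_\ve} u_\ve^{p_\ve-1}\to |x|^{-b}e^{\eta_0}$, while the left-hand side, $p_\ve\mathcal L_\ve u_\ve = \mathcal L_\ve \eta_\ve$, converges to $(-\Delta)^{1/2}\eta_0$ (up to the constant $\varsigma_{1/2}$), giving the limiting equation; the shift by $\log\zeta_{1/2}$ in \eqref{shift} absorbs the residual weight coming from $|x|^{-\alpha_\ve}$ and the $\varsigma$-normalization, and the total mass identity $\kappa_b = 2\pi(1-b)$ follows from a Pohozaev / balance argument applied to \eqref{eq-eta-introduction}. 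This part I would take as given, since it is the content of the earlier sections; for the present statement I only need that $\eta$ solves \eqref{eq-eta-introduction} and that $dm_b$ has finite mass $\kappa_b$.

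Next, the substitution in the CKN inequality. Fix $v\in C^\infty_0(\mathbb R)$ and set $w_\ve = (1 + \tfrac{1}{p_\ve} v)\, u_\ve$ — the one-dimensional analogue of the test family $w_\ve=(1+\ve v)u_\ve$ in \cite{DET}, where here the small parameter is $1/p_\ve$ rather than $\ve$ (the two are comparable since $\ve p_\ve\to a$). Plugging $w_\ve$ into \eqref{ineq_u} with exponent $p_\ve$, I would expand both sides to second order in $1/p_\ve$. For the denominator $\big(\int |x|^{-\beta_\ve p_\ve}|w_\ve|^{p_\ve}\big)^{2/p_\ve}$, write $|w_\ve|^{p_\ve} = u_\ve^{p_\ve}(1+v/p_\ve)^{p_\ve}$, use $(1+v/p_\ve)^{p_\ve}\to e^{v}$ and the fact (provable from \eqref{eq-uepsilon-introduction} and the convergence $\eta_\ve\to\eta_0$) that $|x|^{-\beta_\ve p_\ve} u_\ve^{p_\ve}\,dx$ converges weakly, after the shift, to a multiple of $dm_b$; taking $\log$ and multiplying by $\tfrac{p_\ve}{2}$ converts $(\cdot)^{2/p_\ve}$ into $\tfrac{2}{p_\ve}\log(\cdot)$, whose limit produces the term $\log\big(\tfrac{1}{\kappa_b}\int e^{v}\,dm_b\big)$ plus the linear term $\tfrac{1}{\kappa_b}\int v\,dm_b$ (the latter from the first-order part of $e^{v}$ against the normalized measure, exactly as the $\int v\,d\nu_a$ term arises in \eqref{classical-Onofri}). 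For the numerator $\|w_\ve\|_{\gamma_\ve,\alpha_\ve}^2$, I would expand the bilinear form: the zeroth-order term is $\|u_\ve\|_{\gamma_\ve,\alpha_\ve}^2$, which by \eqref{eq-uepsilon-introduction} and integration against $u_\ve$ equals $\frac{1}{p_\ve}\int |x|^{-\beta_\ve p_\ve} u_\ve^{p_\ve}$ (this matches the zeroth-order term on the other side and cancels against the constant $\Lambda=S_{\gamma_\ve}(\alpha_\ve,\beta_\ve)$, which must itself be tracked and shown to converge appropriately); the first-order term involves $\langle u_\ve, v u_\ve\rangle_{\gamma_\ve,\alpha_\ve}$, which by the equation again reduces to a weighted integral of $v u_\ve^{p_\ve}$ and cancels with the first-order denominator contribution; the surviving second-order term is $\tfrac{1}{p_\ve^2}$ times a quadratic form in $v$, and the key computation is that
\[
p_\ve^2\Big(\text{second-order part of }\|w_\ve\|_{\gamma_\ve,\alpha_\ve}^2\Big)\;\longrightarrow\; \tfrac12\int_{\mathbb R} v(-\Delta_{\mathbb R})^{1/2}v\,dx
\]
(up to the factor $\kappa_b$ after dividing through), using that $u_\ve\to 1$ locally uniformly so the weights $|x|^{-\alpha_\ve}$ and the difference quotients at exponent $n+2\gamma_\ve = 1+2\gamma_\ve$ degenerate precisely to the Gagliardo seminorm of $(-\Delta)^{1/2}$ on $\mathbb R$. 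Collecting the surviving terms, dividing by $\kappa_b$, and using $\Lambda = S_{\gamma_\ve}(\alpha_\ve,\beta_\ve)$ yields \eqref{conclusion}.

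The main obstacle is the rigorous justification of the limits in the numerator: one must control the Gagliardo-type double integral $\iint \frac{(\delta_h w_\ve)^2}{|x-y|^{1+2\gamma_\ve}|x|^{\alpha_\ve}|y|^{\alpha_\ve}}$ uniformly in $\ve$ on \emph{all} of $\mathbb R\times\mathbb R$, not just on compact sets — the function $v$ has compact support but $u_\ve$ does not, it only tends to $1$, so the ``cross'' and ``tail'' regions $|x|$ or $|y|$ large require decay estimates on $u_\ve-1$ (equivalently on $\eta_\ve$) that are uniform in $\ve$, of the type $|\eta_\ve(x)|\lesssim \log(1+|x|)$ with $\ve$-independent constants. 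Such estimates should follow from the integral representation associated with $\mathcal L_\ve$ and the mass bound $\int |x|^{-\beta_\ve p_\ve} u_\ve^{p_\ve}\le C$, but making them uniform as $\gamma_\ve\to\tfrac12$ (where the kernel $|x|^{-1-2\gamma_\ve}$ becomes borderline non-integrable at infinity) is delicate and is where most of the technical work lies; a secondary difficulty is showing that the weak limit of the measures $|x|^{-\beta_\ve p_\ve}u_\ve^{p_\ve}\,dx$ is exactly $c\,dm_b$ with the shift \eqref{shift} accounted for, and identifying the constant $c$ with $\kappa_b$ so that all the $\kappa_b$ factors in \eqref{conclusion} come out correctly — this is essentially a consequence of the convergence $\eta_\ve\to\eta_0$ in $C^0_{loc}$ promoted to an $L^1$ statement against the (uniformly integrable) densities.
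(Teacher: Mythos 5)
Your overall plan -- perturb the extremal by setting $w_\ve = (1 + \tfrac{1}{p_\ve} v)u_\ve$, expand the Rayleigh quotient, cancel the zeroth- and first-order terms against the minimality relation $E_\ve[w_\ve]\geq E_\ve[u_\ve]$, and extract the quadratic form of $(-\Delta)^{1/2}$ from the surviving second-order piece -- is precisely the strategy of the paper (Section \ref{section:proof}), and your observations that $C_{\gamma_\ve,\alpha_\ve}\to 0$ and that the full Gagliardo double integral over $\R\times\R$ is what must be controlled are on target. However, there is one genuine gap in how you propose to pin down the normalization constant.

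You note, correctly, that Fatou only gives $\int e^{\eta_0}|x|^{-b}\,dx =: \kappa_0 \leq \bar\kappa := \lim_\ve \int |x|^{-\beta_\ve p_\ve}u_\ve^{p_\ve}\,dx$, and you propose to close the gap by upgrading the $C^0_{loc}$ convergence of $\eta_\ve$ to an $L^1$ statement against the densities, invoking uniform integrability. But uniform integrability of $|x|^{-\beta_\ve p_\ve}u_\ve^{p_\ve}\,dx$ is not established anywhere in the compactness analysis, and the pointwise bound $u_\ve \leq |x|^{\alpha_\ve}$ for $|x|\geq 1$ gives $u_\ve^{p_\ve} \leq |x|^{\alpha_\ve p_\ve}$ with $\alpha_\ve p_\ve$ \emph{not} going to zero a priori, so tail control as $\gamma_\ve\uparrow\tfrac12$ is genuinely delicate; this is exactly the sort of borderline estimate you flag as the main obstacle, and it would require real additional work. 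The paper sidesteps this entirely with a bootstrap: it first derives the inequality \eqref{main-inequality} with the (possibly strictly larger) constant $\bar\kappa$ in place of $\kappa_b$, extends it by density to the class $\mathcal X_b$ in \eqref{Xb}, then proves (Lemma \ref{const}) that constants lie in $\mathcal X_b$ by building a logarithmic cutoff sequence $\psi_k$ with $\|(-\D)^\frac14\psi_k\|_{L^2}\to 0$, and finally plugs $v\equiv C_0\gg 1$ into the inequality itself to force $\bar\kappa=\kappa_0$. This is the missing idea: you should not try to show the measure convergence directly; instead, derive the inequality with the unknown constant, use the fact that constants are admissible test functions, and let the inequality identify the constant for you. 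Without this step your argument terminates with an inequality whose normalization is not the advertised $\kappa_b = 2\pi(1-b)$, and the statement is not proved.
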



\begin{remark} \label{rem:existenceofparameters}
It is always possible to choose a sequence of parameters in \eqref{choice-parameters} such that the hypotheses of Theorem \ref{main-theorem} hold.
For instance, by fixing $b\in [0,1)$, for $\ve>0$ small enough we can take
 $$a=1,\quad p_\ve:=\frac1\ve,\quad \beta_\ve=\ve(b+\ve),\quad \alpha_\ve:=\frac {\beta_\ve}{2},\quad \gamma_\ve:=\frac12-\ve\left[1-\frac12(b+\ve)\right].$$
 Indeed, this is a valid choice in the setting of Remark \ref{remark:intro}.



\end{remark}

Our proof, although similar in spirit to that of  \cite{DET}, presents a challenging difficulty due to the fact that there is not an explicit formula for the extremal function such as the one in \eqref{bubble} for the classical case. Indeed, while the proof in \cite{DET} relies on explicit calculations as $\ve\to 0$ working with the function $w_\ve=(1+\ve v)u_\ve$  for $u_\ve$ defined in \eqref{bubble}, we need to find a different strategy to pass to the limit. Instead, we achieve compactness by establishing a-priori estimates with uniform constants that do not depend on the parameters.
\\

Problem \eqref{eq-eta-introduction}  has been well studied when $b=0$ (see \cite{Chang-Yang,Xu,DaLio-Martinazzi,DMR,Gerard-Lenzmann,Ahrend-Lenzmann}). In this particular case,  if $\eta\in L^1(dm_0)$ satisfies the non-local Liouville equation
$$(-\D)^\frac12\eta= e^\eta\quad\text{in }\R,\quad \int_\R e^\eta \,dx<\infty,$$
then we must have (up to translation and dilation)
$$\eta=\log\frac{2}{1+x^2}\quad \text{and in particular }\int_\R e^\eta \,dx=2\pi.$$
We observe that, as a consequence of our theorem for $b=0$  we recover Onofri's inequality \eqref{onofri-stereographic}.

In the presence of singular sources (the case $-1<b<1$), the classification of solutions is also known (see \cite{Galvez-Jimenez-Mira,Zhang-Zhou}). In fact, these are given by the one-parameter family
\begin{equation}
\eta_\rho(x)=\log\left(\frac{2(-b+1)\rho\cos\frac{\pi b}{2}}{|x|^{2(-b+1)}+2\rho |x|^{-b+1}\sin \frac{\pi b}{2}+\rho^2 }\right),\quad \rho>0. \label{solution-eta}
\end{equation}
In addition, non-degeneracy of this solution was studied in \cite{DelaTorre-Mancini-Pistoia}, while Brezis-Merle type concentration-compactness theorems were considered in \cite{Zhang-Zhou2}.

\begin{remark}\label{remark-nottrue}
For $b\leq -1$, there is no solution to the singular Liouville equation  \eqref{eq-eta-introduction} (and the function $\eta$ does not make sense in that context). However, we can show  (see the end of Section \ref{section:proof}) that the inequality \eqref{conclusion} does not hold for $-1<b<0$.

 \end{remark}

\begin{remark}
We expect that \eqref{classical-Onofri} also holds true if one considers the fractional CKN inequality in dimension $n=2$ and takes the limiting case $\gamma \to 1$; for this we refer to  the paper  \cite{Chang-Wang}, which is inspired in the dimension continuation argument of Branson \cite{Branson}. Indeed, continuation arguments are very well suited in the fractional world (see, for instance, the formal derivation in \cite[Section 6]{DelaTorre-Gonzalez-Hyder-Martinazzi} at the operator level).\\
\end{remark}

\begin{remark}\label{remark:parameters} The validity of \eqref{ineq_u} can be extended to a wider region of parameters
\begin{equation*}
\alpha\leq \beta\leq \alpha+\gamma, \quad \frac{n-2\gamma}{2}<\alpha<n,
\end{equation*}
(see the appendix  and the paper \cite{Nguyen-Squassina} for details); however, we will not consider those cases in our work.
\end{remark}

The structure of the paper is the following: in Section \ref{section:preliminaries} we give some background on the fractional Laplacian together with some preliminary results. Then, in Section \ref{section:minimizers} we consider the existence and properties of the extremal solutions to \eqref{ineq_u}).  Section \ref{section:compactness} yields the required compactness in order to pass to the limit. Finally, Section  \ref{section:proof} contains the proof of the main theorem. We conclude with an appendix proving Remark \ref{remark:parameters}.\\

\textbf{Acknowledgments:} M.d.M. Gonz\'alez  acknowledges financial support from grants PID2020-113596GB-I00, PID2023-150166NB-I00, RED2022-134784-T, all funded by MCIN/ AEI/10.13039/501100011033, and the ``Severo Ochoa Programme for Centers of Excellence in R\&D'' (CEX2019-000904-S). A. Hyder acknowledges  support from  SERB SRG/2022/001291.

\section{Preliminaries}\label{section:preliminaries}

\subsection{Function spaces}\label{section:function-spaces}

The fractional Laplacian on $\mathbb R^n$ is defined by
\begin{equation*}
(-\Delta)^\gamma u(x)=\varsigma_{\gamma}\textrm{PV}\int_{\mathbb R^n}\frac{u(x)-u(y)}{|x-y|^{n+2\gamma}}\,dy,
\end{equation*}
where
\begin{equation}
\varsigma_{\gamma}=\pi^{-\frac{n}{2}}2^{2\gamma}
\frac{\Gamma\left(\tfrac{n}{2}+\gamma\right)}{\Gamma(1-\gamma)}\gamma
\label{varsigma}
\end{equation}
is a multiplicative constant and $\textrm{PV}$ denotes the principal value.

We will consider  the space $L^1_q(\R^n)$  defined by \begin{equation}L^1_q(\R^n):=\left\{  u\in L^1_{loc}(\R^n):\|u\|_{L^1_q(\mathbb R^n)}<\infty\right\}, \label{defL1q}\end{equation}
where
$$\|u\|_{L^1_q(\mathbb R^n)}=\int_{\R^n}\frac{|u(x)|}{1+|x|^q}\,dx.$$

We now introduce a homogeneous weight. Let $D_{\gamma,\alpha}(\mathbb R^n)$ be the completion of $C^\infty_c(\mathbb R^n)$ with respect to the inner product
\begin{equation}\label{inner-product}
\langle u_1,u_2 \rangle_{\gamma,\alpha}:= \textrm{PV}
\int_{\r^n}\int_{\r^n}\frac{(u_1(x)-u_1(y))(u_2(x)-u_2(y))}{|x-y|^{n+2\gamma}|x|^{{\alpha}}|y|^{{\alpha}}}\,dy\,dx.
\end{equation}
Its associated norm is denoted by
\begin{equation*}\label{weighted-norm*}
\|u\|_{\gamma,\alpha}^2:=  \textrm{PV}
\int_{\r^n}\int_{\r^n}\frac{(u(x)-u(y))^2}{|x-y|^{n+2\gamma}|x|^{{\alpha}}|y|^{{\alpha}}}\,dy\,dx.
\end{equation*}
For simplicity, define the kernel
\begin{equation}
G_{\gamma,\alpha}(x,y):=\frac{1}{|x-y|^{n+2\gamma}|x|^\alpha |y|^\alpha},
\label{defiG}\end{equation}
so that
\begin{equation*}
\mathcal L_{\gamma,\alpha} u(x)= \textrm{PV}\int_{\r^n} [u(x)-u(y)]G_{\gamma,\alpha}(x,y)\,dy.
\end{equation*}
In what follows, all the singular integrals need to be interpreted in a principal value sense, but for the sake of simplifying the notation, we omit writing it explicitly.

Observe that formally we have
\begin{equation*}
\begin{split}
\|u\|_{\gamma,\alpha}^2&=\int_{\r^n}\int_{\r^n} [u(x)-u(y)]^2 G_{\gamma,\alpha}(x,y)\,dxdy\\
&=\int_{\r^n}\int_{\r^n} [u(x)-u(y)]u(x)G_{\gamma,\alpha}(x,y)\,dxdy+\int_{\r^n}\int_{\r^n} [-u(x)+u(y)]u(y)G_{\gamma,\alpha}(x,y)\,dy\\
&=2\int_{\r^n}\int_{\r^n} u(x)\mathcal L_{\gamma,\alpha} u(x)\,dx.
\end{split}
\end{equation*}

Now let us consider the equation
\begin{equation}\label{eq}
\mathcal L_{\gamma,\alpha} u=F\quad\text{in }\mathbb R^n.
\end{equation}
Given $F\in L^2(\mathbb R^n)$, we say that $u\in D_{\gamma,\alpha}$ is a weak energy solution of \eqref{eq} if
\begin{equation}\label{weak-formulation}
\frac{1}{2}\int_{\r^n}\int_{\r^n}\frac{(u(x)-u(y))(\phi(x)-\phi (y))}{|x-y|^{n+2\gamma}|x|^{{\alpha}}|y|^{{\alpha}}}\,dy\,dx=\int_{\mathbb R^n} F(x)\phi(x) \,dx\quad\text{for all }\phi\in C^{\infty}_c(\mathbb R^n).
\end{equation}


\subsection{An equivalent formulation} \label{section:equivalent}
Here we give the ground state representation \eqref{ground-state-representation} for our problem.

Note that $$|x|^{-\alpha}|y|^{-\alpha}(u(x)-u(y))=|x|^{-\alpha}(|x|^{-\alpha}u(x)-|y|^{-\alpha}u(y))+ |x|^{-\alpha} u(x)(|y|^{-\alpha}-|x|^{-\alpha}),$$
then, writing $\tilde u=|x|^{-\alpha}u$,  from formula \eqref{formula-L} we obtain
$$
\mathcal L_{\gamma,\alpha}u=|x|^{-\alpha} \int_{\mathbb R^n} \frac{\tilde u(x)-\tilde u(y)}{|x-y|^{n+2\gamma}}\,dy+
C_{\gamma,\alpha}|x|^{-\alpha-2\gamma}\tilde u(x),$$
where
\begin{equation*}
C_{\gamma,\alpha}=|x|^{\alpha+2\gamma}  \textrm{PV}\int_{\mathbb R^n} \frac{|y|^{-\alpha}-|x|^{-\alpha}}{|x-y|^{n+2\gamma}}\,dy.
\end{equation*}
Changing variables $z=\frac{y}{|x|}$ we arrive to
\begin{equation}\label{constant-C}C_{\gamma,\alpha}=  \int_{\mathbb R^n} \frac{|z|^{-\alpha}-1}{|e_1-z|^{n+2\gamma}}\,dz.
\end{equation}
Trivially, $C_{\gamma,0}=0$. A simple computation shows that this constant has a sign. More precisely:

\begin{lemma}\label{lemma:constant} In our range of parameters, $C_{\gamma,\alpha}$ is a decreasing function of $\alpha$. If in addition we assume
 $0<\alpha<n-2\gamma$, then we have that $ C_{\gamma,\alpha}<0$.
\end{lemma}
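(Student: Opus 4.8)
The plan is to prove the two assertions of Lemma \ref{lemma:constant} separately, working from the scale-invariant formula \eqref{constant-C} for $C_{\gamma,\alpha}=\int_{\mathbb R^n}\frac{|z|^{-\alpha}-1}{|e_1-z|^{n+2\gamma}}\,dz$. For monotonicity in $\alpha$, I would differentiate under the integral sign:
\[
\frac{\partial}{\partial\alpha}C_{\gamma,\alpha}=-\int_{\mathbb R^n}\frac{|z|^{-\alpha}\log|z|}{|e_1-z|^{n+2\gamma}}\,dz,
\]
and the task reduces to showing this integral is positive, i.e.\ that $\int \frac{|z|^{-\alpha}\log|z|}{|e_1-z|^{n+2\gamma}}\,dz<0$. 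The natural device here is the inversion $z\mapsto z/|z|^2$, which sends $|z|\mapsto 1/|z|$ (so $\log|z|\mapsto -\log|z|$) and has Jacobian $|z|^{-2n}$, together with the identity $|e_1-z/|z|^2|=|z|^{-1}|e_1-z|$. Splitting the integral over $\{|z|<1\}$ and $\{|z|>1\}$ and applying this inversion to one piece should pair up the two halves so that the $\log|z|<0$ region is weighted more heavily than the $\log|z|>0$ region precisely when the exchanged exponents satisfy the right inequality; one needs $\alpha$ in the stated range so the combined weight has a definite sign. (One must first check absolute integrability near $z=0$, near $z=e_1$, and near $\infty$: near $0$ we need $\alpha<n$, near $e_1$ we need $2\gamma<2$, i.e.\ $\gamma<1$, and near $\infty$ we need $\alpha+2\gamma>0$; all hold in our parameter range.)

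For the sign statement, the cleanest route is to avoid computing $C_{\gamma,\alpha}$ explicitly and instead combine the monotonicity just established with two anchor values. We already know $C_{\gamma,0}=0$. If I can further show that $C_{\gamma,\alpha}$ vanishes (or is nonpositive) at the other natural endpoint, or more directly that $C_{\gamma,\alpha}<0$ for some $\alpha$ in $(0,n-2\gamma)$, then decreasingness forces $C_{\gamma,\alpha}<0$ on all of $(0,n-2\gamma)$. A symmetric candidate is to look for a value of $\alpha$ at which the integrand enjoys an extra symmetry under the inversion $z\mapsto z/|z|^2$: since that map sends $|z|^{-\alpha}\mapsto |z|^{\alpha}$ and contributes $|z|^{-2n}$ from the Jacobian and $|z|^{n+2\gamma}$ from the kernel, the substitution shows
\[
C_{\gamma,\alpha}=\int_{\mathbb R^n}\frac{|z|^{-\alpha}-1}{|e_1-z|^{n+2\gamma}}\,dz=\int_{\mathbb R^n}\frac{|z|^{\alpha+2\gamma-n}-|z|^{2\gamma-n}}{|e_1-z|^{n+2\gamma}}\,dz,
\]
and comparing the two expressions, the choice $\alpha=n-2\gamma$ makes the second integrand equal to $\frac{1-|z|^{-\alpha}}{|e_1-z|^{n+2\gamma}}=-\big(\text{first integrand}\big)$, hence $C_{\gamma,n-2\gamma}=-C_{\gamma,n-2\gamma}=0$. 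So $C_{\gamma,\alpha}$ vanishes at both $\alpha=0$ and $\alpha=n-2\gamma$; being strictly decreasing in between, it is strictly negative on $(0,n-2\gamma)$, which is exactly the claim.

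The main obstacle I anticipate is justifying the differentiation under the integral and, more delicately, making the inversion argument rigorous in the presence of the non-integrable-looking singularity at $z=e_1$: the principal value must be handled with care because the inversion $z\mapsto z/|z|^2$ does not fix $e_1$ but maps it to itself only up to the way it distorts small symmetric balls, so one should check that the PV cancellation is preserved under the change of variables (equivalently, symmetrize the integrand in the pair $\{z,\, 2\,\langle z,e_1\rangle e_1-\dots\}$ near $e_1$, or regularize by removing an $\varepsilon$-ball and letting $\varepsilon\to0$). For the monotonicity piece, one also needs a dominated-convergence argument to differentiate through, using that $|z|^{-\alpha}\log|z|$ is dominated uniformly for $\alpha$ in a compact subinterval by an integrable function against $|e_1-z|^{-n-2\gamma}$; the only subtlety is near $z=e_1$, where $\log|z|\to 0$ so there is in fact no new difficulty, and near $z=0$ and $z=\infty$ where the logarithm is beaten by a small power of $|z|$. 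Once these analytic points are pinned down, both conclusions follow from the two inversion identities above.
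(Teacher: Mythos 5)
Your approach hinges on the same key tool as the paper's, namely the inversion $z\mapsto z/|z|^2$ together with the Kelvin identity \eqref{Kelvin}, but you organize the argument differently: rather than folding the integral once and for all onto $B_1$, as the paper does, to obtain
\[
C_{\gamma,\alpha}=\int_{B_1}\frac{\bigl(|z|^{-\alpha}-1\bigr)\bigl(1-|z|^{-n+2\gamma+\alpha}\bigr)}{|e_1-z|^{n+2\gamma}}\,dz,
\]
from which both the sign and the $\alpha$-monotonicity are read off pointwise, you differentiate under the integral sign and locate two zeroes of $C_{\gamma,\cdot}$. Both variants rest on the same cancellation, so this is a reorganization rather than a new method; the paper's folded formula is slightly more economical because it makes the pointwise sign comparison visible in a single integrand. (Your concern about the principal value near $z=e_1$ is, for the paper's range $n=1$, $\gamma<\tfrac12$, unfounded: $|z|^{-\alpha}-1$ vanishes like $|z|-1$ there and the kernel singularity has order $1+2\gamma<2$, so the integral is absolutely convergent; no PV bookkeeping is needed.)

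There is, however, a genuine gap in your last step. You conclude $C_{\gamma,\alpha}<0$ on $(0,n-2\gamma)$ from the two zeroes $C_{\gamma,0}=C_{\gamma,n-2\gamma}=0$ plus the claim that $C_{\gamma,\cdot}$ is ``strictly decreasing in between.'' But the monotonicity you establish (and the monotonicity the Lemma actually asserts, ``in our range of parameters,'' i.e.\ $\alpha<\tfrac{n-2\gamma}{2}$) holds only on the first half of the interval: your own inversion argument, carried out carefully, gives
\[
\partial_\alpha C_{\gamma,\alpha}=-\int_{B_1}\frac{\bigl(|z|^{-\alpha}-|z|^{\alpha-n+2\gamma}\bigr)\log|z|}{|e_1-z|^{n+2\gamma}}\,dz,
\]
whose sign flips precisely at $\alpha=\tfrac{n-2\gamma}{2}$ (the two exponents exchange their order for $|z|<1$). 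A function strictly decreasing on the whole of $[0,n-2\gamma]$ cannot vanish at both endpoints, so as written your argument is internally inconsistent. The repair is already within reach: substituting $n-2\gamma-\alpha$ for $\alpha$ in your inversion identity and using $C_{\gamma,n-2\gamma}=0$ yields the symmetry $C_{\gamma,\alpha}=C_{\gamma,\,n-2\gamma-\alpha}$, which together with strict decrease on $\bigl(0,\tfrac{n-2\gamma}{2}\bigr]$ does force $C_{\gamma,\alpha}<0$ on all of $(0,n-2\gamma)$. Alternatively, and more directly, the folded formula gives the sign immediately: for $0<\alpha<n-2\gamma$ and $|z|<1$ one has $|z|^{-\alpha}>1$ and $|z|^{-n+2\gamma+\alpha}>1$, so the numerator is a product of a positive and a negative factor, hence $C_{\gamma,\alpha}<0$ without any reference to monotonicity at all.
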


\begin{proof}
With the change of variables  $z\mapsto \frac{z}{|z|^2}$ in \eqref{constant-C} for the integral on $B_1^c$, and using the identity \begin{equation}\label{Kelvin}
|x||y|\left|\frac{x}{|x|^2}-\frac{y}{|y|^2}\right|=|x-y|
\end{equation}
one obtains
\begin{align*}  C_{\gamma,\alpha}&= \left( \int_{B_1}+\int_{B_1^c}\right) \frac{|z|^{-\alpha}-1}{|e_1-z|^{n+2\gamma}}\,dz\\ &= \int_{ B_1}  \frac{|z|^{-\alpha}-1}{|e_1-z|^{n+2\gamma}}\,dz+ \int_{B_1}  \frac{|z|^{\alpha}-1}{|e_1-z|^{n+2\gamma}}|z|^{-n+2\gamma}\,dz  \\ &=  \int_{B_1}  \frac{(|z|^{-\alpha}-1)(1-|z|^{-n+2\gamma+\alpha})}{|e_1-z|^{n+2\gamma}}\,dz.
\end{align*}
It follows that the  sign of $C_{\gamma,\alpha}$ depends on the sign of $\alpha$ and $-n+2\gamma+\alpha$. In the particular case, $0<\alpha<n-2\gamma$, we clearly have $C_{\gamma,\alpha}<0$ as the numerator is negative in $B_1$.
Moreover, $C_{\gamma,\alpha} $ is decreasing for $\alpha\leq\frac{n-2\gamma}{2}$, and increasing for $\alpha\geq \frac{n-2\gamma}{2}$. To see this, for a fixed $0<|z|<1$ we define $$f(\alpha):=(|z|^{-\alpha}-1)(1-|z|^{-n+2\gamma+\alpha}).$$ Then we have \begin{align*} f'(\alpha)&=-|z|^{-\alpha}\log|z|\left(  1-|z|^{-n+2\gamma+2\alpha}\right) \left\{ \begin{array}{ll} >0 &\quad\text{if }\alpha>\frac{n-2\gamma}{2} \\ <0&\quad\text{if }\alpha<\frac{n-2\gamma}{2} ,\end{array} \right.\\ \end{align*}  giving  the desired  monotonicity of $C_{\gamma,\alpha}$ with respect to $\alpha$.
\end{proof}

Note that for the particular value $\alpha=\frac{n-2}{2}$ we reduce to the Hardy constant
(see, for instance, the appendix in \cite{Ao-DelaTorre-Gonzalez}). More precisely,
\begin{equation}\label{c-Hardy}
c_{\text{H}}:=\varsigma_\gamma \int_{\mathbb R^n} \frac{1-|z|^{-\frac{n-2\gamma}{2}}}{|e_1-z|^{n+2\gamma}}\,dz= 2^{2\gamma}\frac{\Gamma^2\left(\frac{n+2\gamma}{4}\right)}{\Gamma^2\left(\frac{n-2\gamma}{4}\right)}=-\varsigma_\gamma C_{\gamma,\frac{n-2\gamma}{2}}.
\end{equation}

 With a similar computation we have
$$
\|u\|_{\gamma,\alpha}^2:=
\int_{\mathbb R^n}\int_{\mathbb R^n}\frac{(\tilde u(x)-\tilde u(y))^2}{|x-y|^{n+2\gamma}}\,dy\,dx
+ C_{\gamma,\alpha}\int_{\mathbb R^n} \frac{\tilde u^2(x)}{|x|^{2\gamma}} \, dx, \quad \text{where}\quad \tilde u=|x|^{-\alpha}u. $$
Thus it is possible to consider the equivalent functional
\begin{equation*}
\tilde E_{\gamma,\alpha,\beta}[\tilde u]=\frac{\displaystyle\int_{\mathbb R^n} \int_{\mathbb R^n} \frac{(\tilde u(x)-\tilde u(y))^2}{|x-y|^{n+2\gamma}}\,dx\,dy+ C_{\gamma,\alpha}\int_{\mathbb R^n} \frac{\tilde u^2(x)}{|x|^{2\gamma}} \, dx}
{\displaystyle \int_{\mathbb R^n} |\tilde u(x)|^p |x|^{(-\beta+\alpha) p}\,dx}.
\end{equation*}
The existence and properties of minimizers  of $\tilde E_{\gamma,\alpha,\beta}$ were already partially discussed in the introductory section above. In addition,  further regularity properties will be presented in
Section \ref{section:minimizers}.

The operator  \eqref{formula-L} in the new notation has the form
\begin{equation*}
\tilde {\mathcal L}_{\gamma,\alpha} (\tilde u)= (\varsigma_\gamma)^{-1} (-\Delta)^\gamma \tilde u+C_{\gamma,\alpha}\frac{1}{|x|^{2\gamma}}\tilde u.
\end{equation*}

\subsection{Operation rules}

\begin{lemma}(Integration by parts)
Let $u,v\in D_{\gamma,\alpha}$. It holds
\begin{equation}\label{integration-by-parts}
\int_{\mathbb R^n} v(x)\mathcal L_{\gamma,\alpha}u(x)\,dx=\int_{\mathbb R^n} u(x)\mathcal L_{\gamma,\alpha}v(x)\,dx.
\end{equation}
\end{lemma}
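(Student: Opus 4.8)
The plan is to prove the symmetry relation $\int_{\mathbb{R}^n} v\,\mathcal{L}_{\gamma,\alpha}u\,dx = \int_{\mathbb{R}^n} u\,\mathcal{L}_{\gamma,\alpha}v\,dx$ by reducing both sides to the symmetric bilinear form $\langle u,v\rangle_{\gamma,\alpha}$ defined in \eqref{inner-product}. First I would record the pointwise identity that, for suitably nice $w$,
\begin{equation*}
\mathcal{L}_{\gamma,\alpha}w(x) = \mathrm{PV}\int_{\mathbb{R}^n}[w(x)-w(y)]\,G_{\gamma,\alpha}(x,y)\,dy,
\end{equation*}
with $G_{\gamma,\alpha}$ symmetric in $(x,y)$. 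Multiplying by $v(x)$ and integrating over $x$, I would then apply Fubini/Tonelli to the double integral $\int\int v(x)[u(x)-u(y)]G_{\gamma,\alpha}(x,y)\,dy\,dx$; swapping the roles of the variables $x\leftrightarrow y$ in the part involving $u(y)$ and using $G_{\gamma,\alpha}(x,y)=G_{\gamma,\alpha}(y,x)$ gives the standard ``polarization'' identity
\begin{equation*}
\int_{\mathbb{R}^n} v(x)\,\mathcal{L}_{\gamma,\alpha}u(x)\,dx = \frac{1}{2}\int_{\mathbb{R}^n}\int_{\mathbb{R}^n}\frac{(u(x)-u(y))(v(x)-v(y))}{|x-y|^{n+2\gamma}|x|^\alpha|y|^\alpha}\,dy\,dx = \langle u,v\rangle_{\gamma,\alpha}.
\end{equation*}
Since the right-hand side is manifestly symmetric in $u$ and $v$, running the same computation with $u$ and $v$ interchanged yields \eqref{integration-by-parts}.

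For the argument to be rigorous I would first establish it for $u,v\in C^\infty_c(\mathbb{R}^n)$, where all manipulations are justified: near the diagonal the principal-value cancellation $u(x)-u(y)=O(|x-y|)$ controls the singularity $|x-y|^{-n-2\gamma}$ (for $\gamma<1$ the combined local integrability after symmetrization is fine), while the weight $|x|^{-\alpha}|y|^{-\alpha}$ is locally integrable because $\alpha<n$ in our parameter range, and decay at infinity comes from compact support. The key technical point is to make sense of the PV double integral as an absolutely convergent one after the symmetrization (writing the integrand as $\tfrac12[u(x)-u(y)][v(x)-v(y)]G_{\gamma,\alpha}$), so that Fubini applies without the principal value. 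Then I would extend to general $u,v\in D_{\gamma,\alpha}$ by density: the bilinear form $\langle\cdot,\cdot\rangle_{\gamma,\alpha}$ is by definition continuous on $D_{\gamma,\alpha}\times D_{\gamma,\alpha}$, and if $\mathcal{L}_{\gamma,\alpha}u$ is interpreted in the weak/distributional sense of \eqref{weak-formulation} then both sides of \eqref{integration-by-parts} are precisely $\langle u,v\rangle_{\gamma,\alpha}$, so equality is immediate; alternatively one approximates $u,v$ by $C^\infty_c$ functions in the $\|\cdot\|_{\gamma,\alpha}$-norm and passes to the limit.

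The main obstacle is the careful justification of Fubini's theorem in the presence of both the nonintegrable diagonal singularity (handled only via the principal value / symmetrization) and the homogeneous weight singularity at the origin; one must check that after the symmetric rewriting the integrand is genuinely in $L^1(\mathbb{R}^n\times\mathbb{R}^n)$ for the relevant class of functions, which is where the constraint $\alpha<n$ (guaranteed by \eqref{parameter}) and the smoothness/decay of test functions enter. Once that is in place, the proof is essentially the observation that $\int v\,\mathcal{L}_{\gamma,\alpha}u$ equals the symmetric form $\langle u,v\rangle_{\gamma,\alpha}$, mirroring the formal computation already displayed in the excerpt for $\|u\|_{\gamma,\alpha}^2 = 2\int u\,\mathcal{L}_{\gamma,\alpha}u$.
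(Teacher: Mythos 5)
Your proof is correct and takes essentially the same approach as the paper: both reduce $\int v\,\mathcal L_{\gamma,\alpha}u\,dx$ to the symmetric bilinear form by exploiting the symmetry $G_{\gamma,\alpha}(x,y)=G_{\gamma,\alpha}(y,x)$ together with the change of variables $x\leftrightarrow y$. The paper arranges the algebra slightly differently (expanding $[u(x)-u(y)]v(x)$ into three terms and solving $I = 2\int u\,\mathcal L_{\gamma,\alpha}v\,dx - I$), whereas you directly polarize to $\langle u,v\rangle_{\gamma,\alpha}$ and invoke its manifest symmetry, but the underlying idea is identical; your added remarks on Fubini justification and density are a sound supplement, not a different route.
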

\begin{proof}
Calculate
\begin{equation*}
\begin{split}
I:&=\int_{\mathbb R^n} v(x)\mathcal L_{\gamma,\alpha}u(x)\,dx=\int_{\mathbb R^n}\int_{\mathbb R^n} [u(x)-u(y)]v(x)G_{\gamma,\alpha}(x,y)\,dxdy\\
&=\int_{\mathbb R^n}\int_{\mathbb R^n} u(x)[v(x)-v(y)]G_{\gamma,\alpha}(x,y)\,dxdy+\int_{\mathbb R^n}\int_{\mathbb R^n} u(y)[v(y)-v(x)]G_{\gamma,\alpha}(x,y)\,dxdy\\
&+\int_{\mathbb R^n}\int_{\mathbb R^n} [u(x)-u(y)]v(y)G_{\gamma,\alpha}(x,y)\,dxdy\\
&=2\int_{\mathbb R^n} u(x)\mathcal L_{\gamma,\alpha} v(x)\,dx-I,
\end{split}
\end{equation*}
from where we obtain our claim.
\end{proof}

\begin{lemma}(Product formula) Let $u,v\in D_{\gamma,\alpha}$. Then

\begin{equation}\label{product-formula}
\mathcal L_{\gamma,\alpha}(uv)=v\mathcal L_{\gamma,\alpha}u+ u\mathcal L_{\gamma,\alpha}v-\int_{\mathbb R^n} [u(\cdot)-u(y)][v(\cdot)-v(y)]G_{\gamma,\alpha}(\cdot,y)\,dy.
\end{equation}

\end{lemma}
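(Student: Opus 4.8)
The plan is to establish \eqref{product-formula} by a pointwise algebraic identity under the integral sign, exactly as in the proofs of the integration by parts formula \eqref{integration-by-parts} and the "formal" computation of $\|u\|_{\gamma,\alpha}^2$ above. The starting point is the elementary identity
\begin{equation*}
u(x)v(x)-u(y)v(y)=u(x)\,[v(x)-v(y)]+v(x)\,[u(x)-u(y)]-[u(x)-u(y)]\,[v(x)-v(y)],
\end{equation*}
valid for all $x,y\in\mathbb R^n$, which one checks by expanding the right-hand side. Multiplying this identity by the kernel $G_{\gamma,\alpha}(x,y)$ and integrating in $y$, the term $u(x)[v(x)-v(y)]G_{\gamma,\alpha}(x,y)$ integrates (in the principal value sense) to $u(x)\,\mathcal L_{\gamma,\alpha}v(x)$, the term $v(x)[u(x)-u(y)]G_{\gamma,\alpha}(x,y)$ integrates to $v(x)\,\mathcal L_{\gamma,\alpha}u(x)$, and the last term produces precisely the correction integral appearing in \eqref{product-formula}. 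This is the whole content of the identity; the remaining work is just to justify the manipulation of the (a priori) principal-value integrals.

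The justification of the splitting is the only genuinely technical point, and it is mild. First I would recall that $\mathcal L_{\gamma,\alpha}u(x)$ and $\mathcal L_{\gamma,\alpha}v(x)$ are well defined as principal values at the points $x$ under consideration — which holds, for instance, when $u,v$ are additionally of class $C^2$ (or $C^{2\gamma+\delta}$) in a neighbourhood of $x$, in addition to belonging to $D_{\gamma,\alpha}$ — so that the first two resulting terms make sense. Second, and crucially, the last integral
\begin{equation*}
\int_{\mathbb R^n}[u(\cdot)-u(y)]\,[v(\cdot)-v(y)]\,G_{\gamma,\alpha}(\cdot,y)\,dy
\end{equation*}
is in fact \emph{absolutely} convergent near the diagonal: for $u,v$ locally $C^2$ one has $[u(x)-u(y)][v(x)-v(y)]=O(|x-y|^2)$ as $y\to x$, which together with the $|x-y|^{-n-2\gamma}$ singularity of $G_{\gamma,\alpha}$ gives a locally integrable integrand since $2>2\gamma$; away from the diagonal the weight $|x|^{-\alpha}|y|^{-\alpha}$ and the decay built into $D_{\gamma,\alpha}$ control the tail. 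Hence no principal value is needed for that term, and subtracting it from $\mathcal L_{\gamma,\alpha}(uv)(x)=\mathrm{PV}\int[u(x)v(x)-u(y)v(y)]G_{\gamma,\alpha}(x,y)\,dy$ leaves the symmetric combination whose principal value is $u(x)\mathcal L_{\gamma,\alpha}v(x)+v(x)\mathcal L_{\gamma,\alpha}u(x)$, by the same truncation-and-limit argument used for \eqref{integration-by-parts}. Combining the three pieces yields \eqref{product-formula}.

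I expect the main (and only) obstacle to be bookkeeping of regularity hypotheses: strictly speaking $uv$ need not lie in $D_{\gamma,\alpha}$ for arbitrary $u,v\in D_{\gamma,\alpha}$, so the cleanest statement is the pointwise identity valid at points where all three displayed integrals are individually meaningful; in practice this is always the case in our applications, where the functions involved are smooth (indeed, the formula will be applied to $u_\ve$, which are smooth by the regularity theory of Section \ref{section:minimizers}). If one prefers a fully rigorous version, one replaces $G_{\gamma,\alpha}$ by the truncated kernel $G_{\gamma,\alpha}\mathbf 1_{\{|x-y|>\epsilon\}}$, for which all integrals converge absolutely and the splitting is trivial, and then lets $\epsilon\downarrow 0$, using the $O(|x-y|^2)$ bound to pass to the limit in the correction term and the definition of the principal value in the other two. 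This completes the proof.
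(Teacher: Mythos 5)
Your proposal is correct and follows the same route as the paper: the identical pointwise expansion of $u(x)v(x)-u(y)v(y)$, multiplied by the kernel and integrated. The paper states only the algebraic identity and concludes "the lemma follows immediately," whereas you additionally spell out the (standard) truncation argument justifying the splitting of the principal-value integral — this is a welcome clarification but not a different approach.
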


\begin{proof}
We expand
\begin{equation*}
u(x)v(x)-u(y)v(y)=[u(x)-u(y)]v(x)+[v(x)-v(y)]u(x)-[u(x)-u(y)][v(x)-v(y)],
\end{equation*}
and the lemma follows immediately.
\end{proof}

\section{ Regularity of minimizers}\label{section:minimizers}

In this section we focus on the regularity of minimizers  of  \eqref{ineq_u}. Although existence was already discussed in Theorem \ref{theorem:properties}, in order to prove our main result we need further regularity properties.  Throughout this section we work with bounded solutions to Equation \eqref{PDE} that additionally satisfy an integral bound.
We show the following regularity result (independent of the parameters).

\begin{theorem} \label{regularity} Let $\theta\in[0,\theta_0)$ for some $0<\theta_0<1$ fixed. Let $u$ be a locally  bounded weak solution of \begin{align}\label{PDE}\LL_{\gamma,\alpha} u=\frac{f}{|x|^{\theta}}\quad\text{in }(-2M,2M), \end{align} for some $M>>1$,   $\alpha\in \left(0,\, \frac12\right)$. Assume, in addition, that $f$ is uniformly bounded in
$(-2M,2M),$ and
$$\|u\|_{L^1_q(\mathbb R)}<\infty, \quad\hbox{with}\quad  q:=1+2\gamma+\alpha.$$
Then there exist  $\gamma_0\in \left(0,\frac12\right)$  (possibly close to $\frac 1 2$) and  $\sigma>0$ (only depending on $\gamma_0,\theta_0$ but not on $\alpha$) such that for  $\gamma_0\leq\gamma<\frac12$ we have
$$\sup_{x,y\in B_\frac M2}\frac{|u(x)-u(y)|}{|x-y|^\sigma}\leq C,$$
where $C$ is a constant that depends on $M$, $ \|u\|_{L^\infty(B_M)} $,  $\|f\|_{L^\infty(B_M)}$ and
$\|u\|_{L^1_q(\mathbb R)}$.
\end{theorem}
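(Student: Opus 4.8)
The plan is to prove the H\"older estimate by a two-step bootstrap: first upgrade local boundedness to an explicit $L^\infty$ bound on a slightly smaller ball together with a uniform $C^0$ modulus of continuity, and then iterate to obtain the $C^\sigma$ bound with an exponent $\sigma$ independent of $\alpha$. The essential point is to compare $\LL_{\gamma,\alpha}$ with the plain fractional Laplacian via the ground state representation \eqref{rescaling L}: writing $\tilde u=|x|^{-\alpha}u$, the equation \eqref{PDE} becomes
\begin{equation*}
(\varsigma_\gamma)^{-1}(-\Delta)^\gamma \tilde u = \frac{|x|^\alpha f}{|x|^\theta} - \frac{C_{\gamma,\alpha}}{|x|^{2\gamma}}\tilde u \quad \text{in } (-2M,2M).
\end{equation*}
Since $0<\alpha<\tfrac12$ and $\theta<\theta_0<1$, the right-hand side is $L^r_{loc}$ for some $r>1/(2\gamma)$ uniformly in the parameters (using $|C_{\gamma,\alpha}|$ bounded, which follows from Lemma \ref{lemma:constant}, and $\tilde u\in L^\infty_{loc}$ away from the origin), so interior regularity estimates for $(-\Delta)^\gamma$ apply; near the origin one exploits $\alpha>0$ so that $\tilde u$ is the one that may blow up but $|x|^{2\gamma}\tilde u^2$ and $|x|^{\alpha-\theta}$ remain integrable. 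The delicate bookkeeping is to keep all constants uniform as $\gamma\uparrow\tfrac12$, which is why $\gamma_0$ must be taken close to $\tfrac12$ rather than arbitrary.

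\textbf{Key steps.} First I would localize: fix a cutoff $\chi\in C^\infty_c(-2M,2M)$ with $\chi\equiv1$ on $(-\tfrac32 M,\tfrac32 M)$ and split $u=\chi u + (1-\chi)u$, so that $\LL_{\gamma,\alpha}(\chi u) = \frac{f}{|x|^\theta} + \LL_{\gamma,\alpha}((1-\chi)u) + [\text{commutator}]$ on a slightly smaller interval; the nonlocal "tail" terms are controlled by $\|u\|_{L^1_q(\mathbb R)}$ since the kernel $G_{\gamma,\alpha}(x,y)$ decays like $|y|^{-(1+2\gamma+\alpha)}$ for $|y|$ large and $x$ in a compact set, which is exactly the weight $q=1+2\gamma+\alpha$ appearing in the hypothesis. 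Second, I would establish the $L^\infty$ bound on $B_M$ in terms of $\|u\|_{L^\infty(B_{2M})}$ and the data — but this is already assumed, so the real content is the \emph{interior regularity}: apply the known $C^\sigma$ estimate for the fractional Laplacian with an $L^\infty$ (or $L^p$) right-hand side (e.g. Silvestre-type or the Schauder/$L^p$ theory in \cite{Hitchhikers}-style references), translated back through $u = |x|^\alpha \tilde u$. Away from $x=0$ the map $x\mapsto|x|^\alpha$ is smooth with derivatives bounded by $C(M)$ uniformly in $\alpha\in(0,\tfrac12)$, so H\"older continuity of $\tilde u$ transfers to $u$; at $x=0$ one uses that $|x|^\alpha$ is itself $\alpha$-H\"older (hence $\sigma$-H\"older for $\sigma\le\alpha$, but we want $\sigma$ \emph{independent} of $\alpha$, so instead one argues directly that $u=|x|^\alpha\tilde u$ with $\tilde u$ bounded near $0$ forces $|u(x)-u(0)|\le C|x|^\alpha$, and combines with the interior estimate for $|x|$ bounded away from $0$ by a standard scaling/covering argument to get a uniform $\sigma$, e.g. $\sigma = \sigma(\gamma_0,\theta_0)$ chosen smaller than both the interior exponent and a fixed positive number). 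Third, I would run the iteration: on dyadic annuli $A_k = \{2^{-k-1}M \le |x| \le 2^{-k}M\}$ rescale $u_k(z) = u(2^{-k}Mz)$, note the rescaled equation has the same structure with the same constants (homogeneity of the kernel and of the weights), apply the fixed-scale interior estimate, and sum the resulting geometric series in the H\"older seminorms to conclude a clean $C^\sigma$ bound up to the origin.

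\textbf{Main obstacle.} I expect the hard part to be \emph{uniformity of constants as $\gamma\to\tfrac12^-$}. The constant $\varsigma_\gamma$ in \eqref{varsigma} involves $\Gamma(1-\gamma)$, which stays bounded and bounded away from $0$ for $\gamma\in[\gamma_0,\tfrac12)$, so that is fine; the genuinely subtle issue is that the Sobolev/Morrey exponent $\frac{1}{1-2\gamma}$ blows up as $\gamma\to\tfrac12$, so one cannot use a fixed Sobolev embedding — this is precisely why the statement only claims $\gamma_0$ \emph{close to} $\tfrac12$ and why $\sigma$ must be allowed to depend on $\gamma_0$. The resolution is to use the critical embedding into $L^r$ for \emph{every finite} $r$ (available since $1+2\gamma-1 = 2\gamma$ is close to $1$, giving integrability of all orders on bounded sets with constants depending only on the chosen $r$ and on $\gamma_0$), pick one fixed large $r$ with $r > 1/(2\gamma_0) \ge 1/(2\gamma)$, and feed the $L^r$ right-hand side into the $L^p\to C^{2\gamma - 1/r}$ regularity theory; then $\sigma := 2\gamma_0 - 1/r - \delta$ for small $\delta>0$ works uniformly. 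A secondary technical point, which I would handle by the splitting above, is that $f/|x|^\theta$ is not bounded but only $L^r_{loc}$ for $r < 1/\theta_0$; since $\theta_0<1$ this is compatible with the requirement $r>1/(2\gamma_0)$ once $\gamma_0$ is close enough to $\tfrac12$, closing the argument.
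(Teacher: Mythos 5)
Your proposal takes a genuinely different route from the paper, but it has a gap that I do not see how to close.

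\textbf{The Hardy term defeats the $L^r$ regularity step.} The core of your plan is to pass to $\tilde u=|x|^{-\alpha}u$ via the ground state representation and apply interior $L^r\to C^{2\gamma-1/r}$ estimates for $(-\Delta)^\gamma$. But on the right-hand side of the transformed equation you pick up the term $C_{\gamma,\alpha}|x|^{-2\gamma}\tilde u$. Near the origin $u$ is bounded but $\tilde u=|x|^{-\alpha}u\sim u(0)\,|x|^{-\alpha}$ (generically $u(0)\neq0$), so this term behaves like $|x|^{-2\gamma-\alpha}$, which lies in $L^r$ near $0$ only for $r<\tfrac{1}{2\gamma+\alpha}<\tfrac{1}{2\gamma}$. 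That is exactly the opposite of the range $r>\tfrac{1}{2\gamma}$ you need for the Morrey-type embedding. You claim ``the right-hand side is $L^r_{loc}$ for some $r>1/(2\gamma)$'' and that ``$|x|^{2\gamma}\tilde u^2$\ldots remain integrable,'' but integrability alone is not enough, and the required higher integrability fails precisely at the origin. Treating the Hardy term instead as a potential on the left does not rescue the argument either without a separate theory for Hardy--Schr\"odinger operators with constants that remain uniform as $\alpha\to0$, $\gamma\uparrow\tfrac12$, and $C_{\gamma,\alpha}$ varies; nothing in the proposal supplies that.

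\textbf{Transfer back to $u$ also loses the $\alpha$-independence.} Even granting H\"older regularity of $\tilde u$ away from $0$, your fallback near the origin is the bound $|u(x)-u(0)|\le C|x|^\alpha$ coming from $u=|x|^\alpha\tilde u$ with $\tilde u$ bounded. This has exponent $\alpha$, and in the relevant regime (see \eqref{choice-parameters}) $\alpha<\tfrac{1-2\gamma}{2}\to0$, so the exponent collapses; a covering argument cannot upgrade it to a uniform $\sigma>0$ unless you also control the H\"older seminorm of $\tilde u$ on annuli $\{2^{-k-1}\le|x|\le2^{-k}\}$ with a constant that decays like $2^{-k\sigma'}$ for a fixed $\sigma'>0$, which is precisely the Campanato decay one would need to prove in the first place. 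The paper avoids both obstructions by never passing to $\tilde u$: it runs a hole-filling/Campanato iteration directly on the localized weighted seminorm $[u]_\rho^2$ of \eqref{seminorm}, using $\psi$ as a test function in the weak formulation of the original equation (Proposition \ref{Prop-C2}, Lemmas \ref{lem-C4}--\ref{lem-38}), so the Hardy potential never appears and the exponent comes out of the geometric decay of the iteration, uniform in $\alpha$.
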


The proof of Theorem \ref{regularity} is based on a regularity argument developed in \cite{Schikorra} for fractional harmonic maps, using the hole-filling trick  introduced by  Widman in \cite{Widman}. We divide this section into two subsections,  the first one (Subsection \ref{section:lemmas}) contains several technical lemmas that are necessary to set up a Campanato iteration. We will show, in particular, that for the seminorm \eqref{seminorm} defined below it holds
\begin{equation*}
[u]_\rho^2\leq\tau [u]^2_{2^{k_0}\rho}+ \textrm{error terms},
\end{equation*}
for some $\tau<1$, $k_0\in\mathbb N$.  The error terms will be explicit in Proposition \ref{Prop-C2}.
We conclude  by proving Theorem \ref{regularity} in Subsection \ref{section:pfregthem}.\\

Throughout this section we use repeatedly the  notation that we collect in the following lines.

\begin{itemize}
\item {\it Weighted measure:} \begin{equation}\label{measure}
|S|_{\mu_\alpha}=\int_S d\mu_\alpha(x),\quad \text{where}\quad  d\mu_\alpha(x):=\frac{dx}{|x|^\alpha}.
\end{equation}
\item {\it Annulus around a point: }
\begin{equation}\label{annuli}A_{\rho,R}(x_0):=[B_R(x_0)\times(B_R(x_0)\setminus B_\rho(x_0))]\cup [(B_R(x_0)\setminus B_\rho(x_0))\times B_R(x_0)]. \end{equation}
\item {\it Localized semi-norm associated to $u\in D_{\gamma,\alpha}(\mathbb R)$}
\begin{equation}\label{seminorm}
[u]^2_{S}:=\int_{S}\int_{S}G_{\gamma,\alpha}(x,y)|u(x)-u(y)|^2\,dxdy.
\end{equation}
If $S=B_r(x_0)$, we denote $[u]^2_{S}=[u]^2_{r}$. 

\item {\it Averages of $u$:} Fix $R>0$ and  $0<\rho<R$. Given a function $u$ we denote
\begin{align*} \bar u_\rho &:=\fint_{B_\rho(x_0)} u (x)\,d\mu_\alpha(x),\\
 \hat u_\rho &:=\fint_{B_R(x_0)\setminus B_\rho(x_0)}u(x)\,d\mu_\alpha (x). \end{align*}

\item {\it Localization of $u$:}
 Fix $\rho>0$ and let $\vp\in C_c^\infty(B_{2\rho}(x_0))$ be such that $\vp\geq 0$, $\vp\equiv 1$ in $B_\rho(x_0)$ and $|\nabla \vp|\leq \frac{2}{\rho}$. Given a function $u$ we set
\begin{equation}
 \psi_u:=(u-\bar u_\rho)\vp. \label{defpsi}\end{equation}
 In the context below the function $u$ will be fixed (as solution to Equation \eqref{PDE}) and we omit the subindex of $\psi$.

\item{\it The tail of a function:} Fix a function $u$, $x_0\in\mathbb R^n$ and $R>0$. Let
 $$\Tail_\alpha(u;x_0,R):=\left[R^{2\gamma +\alpha }\int_{\mathbb{R}^n\setminus B_R(x_0)} |u(x)||x-x_0|^{-n-2\gamma}\, d\mu_\alpha(x) \right].$$

\end{itemize}
\subsection{ Technical lemmas}\label{section:lemmas}

We first state two results that can be obtained by a straightforward computation.

\begin{lemma}\label{lem-C5} There exists $C>0$ such that for every $x_0\in\R$, $r>0$ and $0\leq\alpha\leq\frac12$ we have
\begin{align}
\frac1C|B_r(x_0)|_{\mu_\alpha}\leq\left\{\begin{array}{ll} r^{1-\alpha}&\quad\text{if }|x_0|\leq 2r\\ r|x_0|^{-\alpha}&\quad\text{if }|x_0|\geq2r\end{array}\right\} \leq C|B_r(x_0)|_{\mu_\alpha}.\end{align}\end{lemma}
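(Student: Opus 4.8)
The plan is to estimate the weighted measure $|B_r(x_0)|_{\mu_\alpha}=\int_{B_r(x_0)}|x|^{-\alpha}\,dx$ by splitting into the two geometric regimes indicated in the statement, and in each regime reducing to an explicit one-dimensional integral whose value is controlled by the claimed expression up to a constant independent of $\alpha\in[0,\tfrac12]$.

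\textbf{Case $|x_0|\le 2r$.} Here the ball $B_r(x_0)$ is contained in $B_{3r}(0)$ and, since $\alpha\ge 0$, the weight $|x|^{-\alpha}$ is only singular at the origin. First I would get the lower bound: $B_r(x_0)$ contains a sub-interval of length $r$ (indeed $B_r(x_0)\supset B_{r/2}(x_0)$, and on $B_{r/2}(x_0)$ one has either $|x|\le 3r$ always, giving $|x|^{-\alpha}\ge (3r)^{-\alpha}$, hence the integral is $\ge \tfrac12 r\cdot(3r)^{-\alpha}\ge c\,r^{1-\alpha}$ using $3^{-\alpha}\ge 3^{-1/2}$). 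For the upper bound, enlarge to $B_{3r}(0)$ and compute $\int_{-3r}^{3r}|x|^{-\alpha}\,dx=\frac{2}{1-\alpha}(3r)^{1-\alpha}\le C\,r^{1-\alpha}$, where $C$ is uniform because $\frac{1}{1-\alpha}\le 2$ for $\alpha\le\tfrac12$ and $3^{1-\alpha}\le 3$.

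\textbf{Case $|x_0|\ge 2r$.} Now for every $x\in B_r(x_0)$ we have $|x_0|-r\le |x|\le |x_0|+r$, and since $r\le |x_0|/2$ this gives $\tfrac12|x_0|\le |x|\le \tfrac32|x_0|$. Hence $|x|^{-\alpha}$ is comparable to $|x_0|^{-\alpha}$ with constants $(\tfrac32)^{-\alpha}$ and $(\tfrac12)^{-\alpha}=2^\alpha$, both bounded above and below uniformly for $\alpha\in[0,\tfrac12]$ (between $2^{-1/2}$ and $2^{1/2}$). Integrating over the interval $B_r(x_0)$ of length $2r$ yields $|B_r(x_0)|_{\mu_\alpha}\asymp r\,|x_0|^{-\alpha}$ with a constant independent of $\alpha$.

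\textbf{Combining.} Putting the two cases together with a single constant $C=\max$ of the finitely many constants produced above gives the two-sided bound in the statement. There is no real obstacle here; the only thing to be mildly careful about is tracking that all the $\alpha$-dependent factors ($3^{1-\alpha}$, $\tfrac{1}{1-\alpha}$, $2^{\pm\alpha}$, $3^{-\alpha}$) are uniformly bounded away from $0$ and $\infty$ on $[0,\tfrac12]$, so that the final constant $C$ may be chosen independent of $\alpha$ (and of $x_0$, $r$ by scaling/translation). This is the content the lemma is really asserting, and it follows from the elementary estimates above.
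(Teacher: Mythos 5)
Your proof is correct and is exactly the ``straightforward computation'' the paper alludes to without spelling out: the paper presents Lemma \ref{lem-C5} without proof, stating only that it follows by direct computation, and your two-case split (ball near the origin versus ball away from the origin) with the explicit integral estimates and uniform control of the $\alpha$-dependent constants on $[0,\tfrac12]$ is the natural and correct way to carry that out.
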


\begin{lemma}\label{lem-C3}
We have, for  $0<\rho<R$,
\begin{align} \int_{A_{\rho,R}}G_{\gamma,\alpha}(x,y)|u(x)-u(y)|^2\,dxdy\leq 2\left( [u]^2_{R}-[u]_{\rho}^2\right). \end{align}
 \end{lemma}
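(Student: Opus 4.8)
\textbf{Proof plan for Lemma~\ref{lem-C3}.}

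The plan is to expand the seminorm $[u]^2_R$ over the product set $B_R(x_0)\times B_R(x_0)$ and isolate the contribution of the ``annular'' region $A_{\rho,R}(x_0)$ defined in \eqref{annuli}. First I would observe that, by definition \eqref{seminorm},
\begin{equation*}
[u]^2_R=\int_{B_R(x_0)}\int_{B_R(x_0)}G_{\gamma,\alpha}(x,y)|u(x)-u(y)|^2\,dx\,dy,
\end{equation*}
and I would split the square $B_R(x_0)\times B_R(x_0)$ into the three pieces $B_\rho(x_0)\times B_\rho(x_0)$, $A_{\rho,R}(x_0)$, and the ``far--far'' piece $(B_R(x_0)\setminus B_\rho(x_0))\times(B_R(x_0)\setminus B_\rho(x_0))$. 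The integrand $G_{\gamma,\alpha}(x,y)|u(x)-u(y)|^2$ is nonnegative (since $G_{\gamma,\alpha}>0$ away from the diagonal), so dropping the far--far piece gives
\begin{equation*}
[u]^2_R\ge \int_{B_\rho(x_0)}\int_{B_\rho(x_0)}G_{\gamma,\alpha}(x,y)|u(x)-u(y)|^2\,dx\,dy+\int_{A_{\rho,R}(x_0)}G_{\gamma,\alpha}(x,y)|u(x)-u(y)|^2\,dx\,dy,
\end{equation*}
and the first term on the right is exactly $[u]^2_\rho$ by definition.

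This already yields the cleaner bound
\begin{equation*}
\int_{A_{\rho,R}(x_0)}G_{\gamma,\alpha}(x,y)|u(x)-u(y)|^2\,dx\,dy\le [u]^2_R-[u]^2_\rho,
\end{equation*}
which is stronger than the stated inequality (the factor $2$ is harmless slack, presumably kept because $A_{\rho,R}$ as written in \eqref{annuli} is a union of two overlapping rectangles, and a reader bounding things by adding the two contributions separately would pick up a factor $2$; alternatively it absorbs the overlap $(B_R\setminus B_\rho)\times(B_R\setminus B_\rho)$ counted with multiplicity). To get exactly the factor $2$ as written, I would instead bound the $A_{\rho,R}$ integral by the sum of the two contributions $B_R\times(B_R\setminus B_\rho)$ and $(B_R\setminus B_\rho)\times B_R$, each of which is at most $[u]^2_R-[u]^2_\rho$ by the symmetry of $G_{\gamma,\alpha}$ and the nonnegativity argument above applied once to each rectangle.

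There is no real obstacle here: everything reduces to nonnegativity of the integrand and set additivity of the integral, so the only care needed is bookkeeping of which rectangles overlap and making sure the principal-value interpretation near the diagonal (inside $B_\rho\times B_\rho$ and the diagonal band of $B_R\times B_R$) is handled consistently — but since all these singular pieces appear on both sides with the same sign, they cancel in the subtraction $[u]^2_R-[u]^2_\rho$ and cause no trouble. I would write the short computation in two lines and move on.
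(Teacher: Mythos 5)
Your conclusion is right, but the first decomposition you describe is off. The definition in \eqref{annuli},
\begin{equation*}
A_{\rho,R}=\bigl[B_R\times(B_R\setminus B_\rho)\bigr]\cup\bigl[(B_R\setminus B_\rho)\times B_R\bigr],
\end{equation*}
already \emph{contains} the ``far--far'' rectangle $(B_R\setminus B_\rho)\times(B_R\setminus B_\rho)$: that rectangle sits inside both of the two sets being unioned. So $B_\rho\times B_\rho$, $A_{\rho,R}$, and the far--far piece are not a disjoint three-way partition of $B_R\times B_R$, and ``dropping the far--far piece'' is not a meaningful operation. What is true is the cleaner statement
\begin{equation*}
B_R\times B_R=(B_\rho\times B_\rho)\sqcup A_{\rho,R}\quad\text{(disjoint union)},
\end{equation*}
from which
\begin{equation*}
\int_{A_{\rho,R}}G_{\gamma,\alpha}(x,y)\,|u(x)-u(y)|^2\,dx\,dy=[u]_R^2-[u]_\rho^2
\end{equation*}
holds with \emph{equality}, so the factor $2$ in the lemma is pure slack and your ``$\le$'' is a weakening of an identity, arrived at by faulty bookkeeping rather than by an actual drop.

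Your second route is the correct one for producing exactly the stated bound: $\int_{A_{\rho,R}}\le\int_{R_1}+\int_{R_2}$ where $R_1=B_R\times(B_R\setminus B_\rho)$ and $R_2=(B_R\setminus B_\rho)\times B_R$, and each $\int_{R_i}\le[u]_R^2-[u]_\rho^2$ since $R_i\subset (B_R\times B_R)\setminus(B_\rho\times B_\rho)$ and the integrand is nonnegative. The paper states this lemma without proof (``can be obtained by a straightforward computation''), so there is no author argument to compare against; either the exact identity above or your sum-of-rectangles estimate settles it. Just fix the claim about the three-piece partition before writing it up.
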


In what follows we assume that $u$ is a locally bounded  weak solution to Equation \eqref{PDE}. We also fix  $\rho>0$ and a point  $x_0\in\mathbb{R}^n$  that we assume (in what remains of this section) is the center for  all balls and annuli, but we omit writing it  explicitly to simplify the notation.

The main goal of this subsection is to prove the following proposition, which contains the main idea in the hole filling technique.


\begin{proposition}\label{Prop-C2}Let  $u$ be a solution of  \eqref{PDE} and $M>>1$ be fixed. For $k_0\geq 3$ there  exist constants $C_0$ and  $C(k_0)>1$ such that for every $0\leq\alpha\leq\frac12$, $\frac13\leq\gamma<\frac12$, $0\leq|x_0|\leq\frac M2$ we have
\begin{align}[u]_\rho^2&\leq\tau [u]^2_{2^{k_0}\rho}+\frac{C_0}{1+C(k_0)}\sum_{k=k_0+1}^{N+1}2^{-k\gamma}[u]_{2^k\rho}^2\\&\quad +C_0\rho^{1-\alpha}\|u\|_{L^\infty(B_{2\rho})}\left( \|u\|_{L^\infty(B_{2\rho})}+\Tail(u,x_0,1)\right)+C_0\rho^{1-\theta}\|u\|_{L^\infty(B_{2\rho})}\|f\|_{L^\infty(B_{2\rho})},\end{align}
 where  $ \tau:= \frac{C(k_0)}{1+C(k_0)}<1$, $\theta$ is   given by \eqref{PDE}, $\rho>0$ and $N\geq k_0+1$ satisfies  $1\leq 2^{N+1}\rho\leq 2 $. The constant $C_0$ is independent of $\rho, k_0, N,\alpha,\gamma$. 

\end{proposition}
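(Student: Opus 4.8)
The strategy is the classical hole-filling (Widman–Campanato) argument, adapted to the weighted nonlocal operator $\mathcal L_{\gamma,\alpha}$. The starting point is to test the weak formulation \eqref{weak-formulation} of \eqref{PDE} with the localized function $\psi=(u-\bar u_\rho)\vp$ from \eqref{defpsi}, where $\vp$ is the cutoff supported in $B_{2\rho}$. This produces an identity of the form $\tfrac12\langle u,\psi\rangle_{\gamma,\alpha}=\int |x|^{-\theta} f\,\psi\,dx$. The right-hand side is immediately controlled by $C_0\rho^{1-\theta}\|u\|_{L^\infty(B_{2\rho})}\|f\|_{L^\infty(B_{2\rho})}$ using Lemma \ref{lem-C5} (to bound $\int_{B_{2\rho}}|x|^{-\theta}dx$ up to the weight) and the fact that $|\psi|\le 2\|u\|_{L^\infty(B_{2\rho})}$. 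So the real work is to turn the left-hand side into the seminorm $[u]_\rho^2$ plus admissible error terms.

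Next I would expand $\langle u,\psi\rangle_{\gamma,\alpha}=\int\!\!\int (u(x)-u(y))(\psi(x)-\psi(y))G_{\gamma,\alpha}(x,y)\,dxdy$ by splitting $\mathbb R\times\mathbb R$ into the ``interior'' piece $B_{2\rho}\times B_{2\rho}$ and the ``cross'' piece where at least one variable lies outside $B_{2\rho}$. On the interior piece, writing $\psi=(u-\bar u_\rho)\vp$ and using $\vp\equiv1$ on $B_\rho$, one extracts $[u]_\rho^2$ as the main term; the discrepancy between $\psi(x)-\psi(y)$ and $u(x)-u(y)$ on $B_{2\rho}\setminus B_\rho$ is handled via $|\vp(x)-\vp(y)|\le \tfrac2\rho|x-y|$ and $|u-\bar u_\rho|\le 2\|u\|_{L^\infty(B_{2\rho})}$, producing terms bounded by $C_0\rho^{1-\alpha}\|u\|_{L^\infty(B_{2\rho})}^2$ after integrating $G_{\gamma,\alpha}$ against $|x-y|$ and using Lemma \ref{lem-C5}. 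On the cross piece, $\psi(y)=0$ when $y\notin B_{2\rho}$, so the contribution is $-2\int_{B_{2\rho}}\int_{B_{2\rho}^c}(u(x)-u(y))(u(x)-\bar u_\rho)\vp(x)G_{\gamma,\alpha}(x,y)\,dydx$; this splits into a ``tail'' term controlled by $C_0\rho^{1-\alpha}\|u\|_{L^\infty(B_{2\rho})}\big(\|u\|_{L^\infty(B_{2\rho})}+\Tail(u,x_0,1)\big)$ (decomposing $B_{2\rho}^c$ further into $B_1\setminus B_{2\rho}$ and $B_1^c$, and using the integral bound $\|u\|_{L^1_q(\mathbb R)}<\infty$ with $q=1+2\gamma+\alpha$ for the far region) and a piece involving $(u(x)-\bar u_\rho)^2$ which, after a Poincaré-type control of $\bar u_\rho-\hat u_{2\rho}$ and the layer-cake decomposition of $B_1\setminus B_{2\rho}$ into dyadic annuli $B_{2^{k+1}\rho}\setminus B_{2^k\rho}$, yields the two dyadic sums.

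The decisive point — and the reason for the parameters $k_0\ge 3$, $\tau<1$ — is the hole-filling step. After the above expansion one has $[u]_\rho^2 \le (\text{explicit errors}) + C\sum_{k\ge 1} 2^{-k\gamma}\big([u]_{2^{k+1}\rho}^2-[u]_{2^k\rho}^2\big)$-type quantities coming from Lemma \ref{lem-C3} applied to the annular integrals $\int_{A_{2^k\rho,2^{k+1}\rho}}G_{\gamma,\alpha}|u(x)-u(y)|^2$. Summing the geometric-type series and separating the first $k_0$ terms from the tail: the first block telescopes into a multiple of $[u]_{2^{k_0}\rho}^2$ with a coefficient $C(k_0)$ that one arranges (by choosing the normalization of the telescoping and absorbing constants) to satisfy $\tau=\frac{C(k_0)}{1+C(k_0)}<1$ automatically, while the tail block $\sum_{k\ge k_0+1}2^{-k\gamma}[u]_{2^k\rho}^2$ keeps its small prefactor $\frac{C_0}{1+C(k_0)}$. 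The cutoff $2^{N+1}\rho\in[1,2]$ just marks where the ``near'' decomposition stops and the $\Tail$ term takes over. The main obstacle I anticipate is bookkeeping the $\alpha$-weight uniformly: every estimate on $G_{\gamma,\alpha}$ and on ball measures must be done with constants independent of $\alpha\in[0,\tfrac12]$ and of $\gamma\in[\tfrac13,\tfrac12)$, which is exactly where Lemma \ref{lem-C5} (uniform two-sided bounds on $|B_r(x_0)|_{\mu_\alpha}$ distinguishing $|x_0|\le 2r$ from $|x_0|\ge 2r$) and Lemma \ref{lem-C3} are used repeatedly, and where one must be careful that the geometric series in $2^{-k\gamma}$ converges with a bound uniform down to $\gamma=\tfrac13$.
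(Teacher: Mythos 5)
You have the right high-level framework (test the weak formulation with the localized $\psi$, split the bilinear form into a near region and a far region, decompose the intermediate zone into dyadic annuli, and control the far field with the tail), and Lemmas~\ref{lem-C5}, \ref{lem-C4}, \ref{lem-C6} enter exactly where you expect. Two steps, however, are misidentified, and each one by itself would break the proof.

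First, the cutoff error cannot be controlled crudely by $C_0\rho^{1-\alpha}\|u\|^2_{L^\infty(B_{2\rho})}$. The discrepancy terms contain integrals of the type
$\int\!\!\int G_{\gamma,\alpha}(x,y)\,|\vp(x)-\vp(y)|^2\,|u(y)-\bar u_\rho|^2\,dx\,dy$
over an annulus. If you bound $|u-\bar u_\rho|\le 2\|u\|_{L^\infty}$ and $|\vp(x)-\vp(y)|\le \rho^{-1}|x-y|$, after integrating $G_{\gamma,\alpha}|x-y|^2$ you are left with a factor of order $\rho^{1-2\gamma-2\alpha}$ (or $\rho^{1-2\gamma}|x_0|^{-2\alpha}$ when $|x_0|\gg\rho$), which does not vanish as $\rho\to 0$ when $\gamma$ is close to $\tfrac12$ and hence is not an admissible Campanato error. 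The paper instead replaces the $L^\infty$ bound on $|u-\bar u_\rho|$ by a weighted Poincar\'e-type estimate (estimates \eqref{est-propo-1}--\eqref{est-propo-2}), which converts the entire cutoff error into $C(k_0)\bigl([u]^2_{2^{k_0}\rho}-[u]^2_\rho\bigr)$ through Lemma~\ref{lem-C3} applied to the \emph{single} annulus $A_{\rho,2^{k_0}\rho}$. This is also why the near region must be $B_{2^{k_0}\rho}$ (not $B_{2\rho}$ as you propose): otherwise one only reaches $[u]^2_{2\rho}-[u]^2_\rho$, the dyadic tail starts at $k=1$, and there is no small prefactor $\tfrac{1}{1+C(k_0)}$ to tune in the later iteration.

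Second, and relatedly, your account of the origin of $\tau=\tfrac{C(k_0)}{1+C(k_0)}$ does not hold. Telescoping $\sum 2^{-k\gamma}\bigl([u]^2_{2^{k+1}\rho}-[u]^2_{2^k\rho}\bigr)$ produces a quantity bounded above by a multiple of $[u]^2_{2^{k_0}\rho}$ (all the terms are nonnegative), but it never produces the \emph{negative} term $-C(k_0)[u]^2_\rho$ on the right-hand side. The factor $\tau<1$ comes only after you have
$[u]^2_\rho\le C(k_0)\bigl([u]^2_{2^{k_0}\rho}-[u]^2_\rho\bigr)+\cdots$,
move $-C(k_0)[u]^2_\rho$ to the left and divide by $1+C(k_0)$ — the literal hole-filling. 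That negative $-[u]^2_\rho$ is supplied precisely by the Poincar\'e estimate you skipped. Without it you would only obtain $[u]^2_\rho\le C(k_0)[u]^2_{2^{k_0}\rho}+\cdots$ with $C(k_0)\ge1$, which carries no decay and cannot be iterated.
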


In what remains of this section we need to keep track on the dependence of $k_0$ of the constants,  hence,  the constants that depend on $k_0$ will be denoted by $C_{k_0}$, while $C$ will denote universal constants (that do not depend on the choice of $k_0$).

Before giving the proof of  Proposition \ref{Prop-C2} we need two intermediate lemmas to handle the lower order terms.

\begin{lemma}
\label{lem-C4} Let $u$ and the parameters $\gamma$, $\alpha$  be as in Proposition \ref{Prop-C2}. Fix $R= 2^{k_0}\rho$ for some $\rho>0$, $k_0\geq 3$. Then it holds
\begin{align} \int_{B_1\setminus B_R}\int_{B_{ R}}G_{\gamma,\alpha}(x,y)|u(x)-u(y)||\psi(x)|\,dxdy\leq C\sum_{k=k_0}^N 2^{-k\gamma}[u]^2_{{2^{k+1}\rho}}, \end{align}
where $N\in\mathbb N$ satisfies  $1\leq 2^{N+1}\rho\leq 2 $, and the constant $C$ is independent of $\rho, k_0, N,\alpha,\gamma,u$, $M$.  \end{lemma}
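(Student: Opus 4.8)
\textbf{Proof plan for Lemma \ref{lem-C4}.}

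The plan is to split the double integral over $(B_1\setminus B_R)\times B_{R}$ into dyadic shells in the variable $x$: write $B_1\setminus B_R = \bigcup_{k=k_0}^{N} (B_{2^{k+1}\rho}\setminus B_{2^{k}\rho})$ (up to the outermost shell reaching $B_1$), and on each shell $x\in B_{2^{k+1}\rho}\setminus B_{2^{k}\rho}$, $y\in B_R = B_{2^{k_0}\rho}$ we have $|x-y|\geq |x|-|y|\gtrsim 2^k\rho$, so that $G_{\gamma,\alpha}(x,y)\leq C\,2^{-k(1+2\gamma)}\rho^{-1-2\gamma}|x|^{-\alpha}|y|^{-\alpha}$. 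Since $\psi = (u-\bar u_\rho)\varphi$ is supported in $B_{2\rho}\subset B_R$, the factor $|\psi(x)|$ is nonzero only when $x\in B_{2\rho}$; but we have already placed $x$ outside $B_R = B_{2^{k_0}\rho}$ with $k_0\geq 3$, so $|\psi(x)|\equiv 0$ on the region of integration. This trivializes the integral — so the real content must be that the lemma is stated with $\psi$ replaced by something whose support is not contained in $B_R$, or that the roles are swapped and it is $|\psi(y)|$ with $y\in B_R$. Reading the intended use in Proposition \ref{Prop-C2}, the correct interpretation is that we must bound $\int_{B_1\setminus B_R}\int_{B_R} G_{\gamma,\alpha}(x,y)|u(x)-u(y)|\,|\psi(y)|\,dx\,dy$ (an off-diagonal tail term arising from testing the equation against $\psi$), and I will carry out the estimate in that form.

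First I would use $|\psi(y)|\leq \|u\|_{L^\infty(B_{2\rho})} + |\bar u_\rho| \leq 2\|u\|_{L^\infty(B_{2\rho})}$ on $B_{2\rho}$ and, more importantly, the Cauchy–Schwarz inequality in the form
\begin{align*}
\int_{B_1\setminus B_R}\int_{B_R} G_{\gamma,\alpha}(x,y)|u(x)-u(y)|\,|\psi(y)|\,dx\,dy
&\leq \Big(\int_{B_1\setminus B_R}\int_{B_R} G_{\gamma,\alpha}(x,y)|u(x)-u(y)|^2\,dx\,dy\Big)^{1/2}\\
&\quad\times\Big(\int_{B_1\setminus B_R}\int_{B_R} G_{\gamma,\alpha}(x,y)|\psi(y)|^2\,dx\,dy\Big)^{1/2}.
\end{align*}
The first factor is controlled, via dyadic decomposition of $B_1\setminus B_R$ into shells $2^k\rho\leq|x|\leq 2^{k+1}\rho$ and Lemma \ref{lem-C3} (observing that on such a shell the annular region $A_{\cdot,\cdot}$ contributes $[u]^2_{2^{k+1}\rho}-[u]^2_{2^k\rho}\leq [u]^2_{2^{k+1}\rho}$), by $C\sum_{k=k_0}^N [u]^2_{2^{k+1}\rho}$; the geometric gain $2^{-k\gamma}$ is extracted from the second factor. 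For the second factor, I would use $|\psi(y)|^2\leq C\|u\|^2_{L^\infty(B_{2\rho})}\mathbf 1_{B_{2\rho}}(y)$ and, on the shell $2^k\rho\leq |x|\leq 2^{k+1}\rho$, estimate $\int_{B_{2\rho}} G_{\gamma,\alpha}(x,y)\,dy\leq C\,|x|^{-1-2\gamma-\alpha}\int_{B_{2\rho}}|y|^{-\alpha}dy\leq C\,(2^k\rho)^{-1-2\gamma-\alpha}\rho^{1-\alpha}$; integrating $|x|^{-\alpha}$ over the shell gives another factor $(2^k\rho)^{1-\alpha}$, so the $x$-shell contributes $C(2^k\rho)^{-2\gamma-2\alpha+2-2\alpha}\dots$ — after multiplying by the first-factor bound and summing, the net power works out (using $2^{N+1}\rho\sim 1$ to absorb the remaining $\rho$-powers) to $C\sum_k 2^{-k\gamma}[u]^2_{2^{k+1}\rho}$. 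I would need to do this bookkeeping carefully to check that all the exponents of $\rho$ cancel against the normalization $2^{N+1}\rho\leq 2$ and that $\gamma\geq\tfrac13$ makes the series converge uniformly; this is the one genuinely fiddly part.

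The main obstacle I anticipate is \emph{not} any deep idea but precisely this exponent-tracking: one must verify that the constant $C$ comes out independent of $\rho,k_0,N,\alpha,\gamma,u,M$, which forces one to be scrupulous about (i) using Lemma \ref{lem-C5} to convert measures $|B_r|_{\mu_\alpha}$ into the clean powers $r^{1-\alpha}$ or $r|x_0|^{-\alpha}$ depending on the location of the shell relative to the origin, and (ii) making sure the $\alpha$-dependent powers $r^{-\alpha}$, $r^{1-\alpha}$ recombine into $\alpha$-free quantities after the Cauchy–Schwarz split (they do, because each of $x$ and $y$ carries exactly one weight $|\cdot|^{-\alpha}$ and the two halves of the Schwarz inequality each see one of them). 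Once those cancellations are checked and the geometric series $\sum 2^{-k\gamma}$ is summed, re-indexing $k+1\mapsto k$ absorbs the shift and yields the stated inequality with the sum running from $k=k_0$ to $N$; I would then remark that $\theta$ and $f$ play no role here, so the bound is as claimed.
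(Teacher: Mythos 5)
There are two problems with the proposal, and the first one undercuts the entire framing.

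\textbf{Misreading of the domain of integration.} You claim that since $x$ ranges over $B_1\setminus B_R$ and $\psi$ is supported in $B_{2\rho}\subset B_R$, the integrand vanishes identically. But in the iterated integral $\int_{B_1\setminus B_R}\int_{B_R}\cdots\,dx\,dy$, the inner integral $\int_{B_R}\cdots\,dx$ pairs with the differential $dx$: it is $x$ that ranges over $B_R$ (indeed over $B_{2\rho}$, where $\psi$ lives), and $y$ that ranges over $B_1\setminus B_R$. There is no trivialization. The lemma is exactly the off-diagonal tail term arising from testing the equation against $\psi$ (which lives near the origin) paired against $u(\cdot)$ on a medium-scale annulus; the paper's proof of Proposition~\ref{Prop-C2} uses it in precisely this form. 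Having imagined a bug, you reformulate the statement with $\psi(y)$ in place of $\psi(x)$; this is a different (and not needed) assertion, so the remainder of your argument is aimed at the wrong target.

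\textbf{The Cauchy--Schwarz split loses the $u$-independence of the constant.} Even if one repairs the domain issue, your plan bounds the second Cauchy--Schwarz factor by $|\psi|^2\le C\|u\|^2_{L^\infty(B_{2\rho})}\mathbf 1_{B_{2\rho}}$, which injects $\|u\|_{L^\infty}$ into the final estimate. But the lemma explicitly asserts that the constant $C$ is independent of $u$, so the bound must close entirely in terms of the seminorms $[u]^2_{2^{k+1}\rho}$. The paper avoids this by keeping the factor $|u(x)-\bar u_\rho|$ (note $|\psi(x)|\le|u(x)-\bar u_\rho|$ on $B_{2\rho}$) and then applying Jensen's inequality,
\begin{equation}
|u(x)-\bar u_\rho|^2=\left|\fint_{B_\rho}(u(x)-u(z))\,d\mu_\alpha(z)\right|^2\le\fint_{B_\rho}|u(x)-u(z)|^2\,d\mu_\alpha(z),
\end{equation}
and afterwards converting the weighted average into $[u]^2_{2\rho}$ with the help of Lemma~\ref{lem-C5}. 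This is the key step you are missing. Apart from that, the dyadic decomposition of $B_1\setminus B_R$, the observation $|x-y|\approx 2^k\rho$ on each shell, and the extraction of the geometric factor $2^{-k\gamma}$ from the kernel decay are all present in the paper, so the skeleton of your argument is right; the two concrete gaps are the variable mix-up and the absence of the Jensen step.
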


\begin{proof}
Note that for suitable $k_0$ and $N$ we have $B_1\setminus B_R\subset \displaystyle\cup_{k=k_0}^N(B_{2^{k+1}\rho}\setminus B_{2^k\rho})$. Since we are assuming that $\textrm{supp}(\vp)\subset B_{2\rho}$, the definition of $\psi$ given by \eqref{defpsi} implies
 \begin{align}  \int_{B_1\setminus B_R}\int_{B_R}&G_{\gamma,\alpha}(x,y)|u(x)-u(y)||\psi(x)|\,dxdy\\&\leq \sum_{k=k_0}^N\int_{B_{2^{k+1}\rho}\setminus B_{2^k\rho}} \int_{B_{2\rho}}G_{\gamma,\alpha}(x,y)|u(x)-u(y)||u(x)-\bar u_\rho|\,dxdy.
 \end{align}

  By H\"older's inequality
  \begin{align} \int_{B_{2^{k+1}\rho}\setminus B_{2^k\rho}} \int_{B_{2\rho}}&G_{\gamma,\alpha}(x,y)|u(x)-u(y)||u(x)-\bar u_\rho|\,dxd y\\& \leq [u]_{{2^{k+1}\rho}}\left( \int_{B_{2^{k+1}\rho}\setminus B_{2^k\rho}} \int_{B_{2\rho}}G_{\gamma,\alpha}(x,y)|u(x)-\bar u_\rho|^2\,dxd y\right) ^\frac12. \end{align}
   Since $k\geq k_0\geq3$, for $x\in B_{2\rho}$ and $y\in B_{2^{k+1}\rho}\setminus B_{2^k\rho}$ we have that $|x-y|\approx 2^k\rho$. Therefore, by Jensen's inequality \begin{align}
    \int_{B_{2^{k+1}\rho}\setminus B_{2^k\rho}} \int_{B_{2\rho}}G_{\gamma,\alpha}(x,y)|u(x)-\bar u_\rho|^2\,dxd y \leq &C(2^k\rho)^{-1-2\gamma}\frac{|B_{2^{k+1}\rho}|_{\mu_\alpha}}{|B_\rho|_{\mu_\alpha}}\int_{B_{2\rho}}\int_{B_\rho}|u(x)-u(z)|^2\,d\mu_\alpha(z)d\mu_\alpha(x)\\&\leq C2^{-k(1+2\gamma)} \frac{|B_{2^{k+1}\rho}|_{\mu_\alpha}}{|B_\rho|_{\mu_\alpha}}\int_{B_{2\rho}}  \int_{B_\rho}G_{\gamma,\alpha}(x,z)|u(x)-u(z)|^2\,d zdx\\&\leq C 2^{-2k\gamma}[u]^2_{{2\rho}},\end{align}
    where the last inequality follows from  Lemma \ref{lem-C5} since
    $$\frac{|B_{2^{k+1}\rho}|_{\mu_\alpha}}{|B_\rho|_{\mu_\alpha}}\leq C 2^k.$$

\end{proof}

\begin{lemma} \label{lem-C6} Consider  $R\geq 2\rho$ and the functions $u$ and $
\psi$ as previously defined. Then it holds
\begin{align*}
\int_{\R\setminus B_1}\int_{B_R}G_{\gamma,\alpha}(x,y)|u(x)-u(y)||\psi(x)|\,dxdy\leq   C\rho^{1-\alpha}\|u\|_{L^\infty(B_{2\rho})}\left( \|u\|_{L^\infty(B_{2\rho})}+\Tail(u,x_0,1)\right). \end{align*}
\end{lemma}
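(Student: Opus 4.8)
The plan is to estimate the contribution of the far region $\R\setminus B_1$ directly, exploiting the fact that $\psi = (u-\bar u_\rho)\vp$ is supported in $B_{2\rho}$. Since $\textrm{supp}(\psi)\subset B_{2\rho}$ and we are integrating the first variable $x$ only over $B_R$, only $x\in B_{2\rho}$ contributes, so the left-hand side equals
\[
\int_{\R\setminus B_1}\int_{B_{2\rho}} G_{\gamma,\alpha}(x,y)\,|u(x)-u(y)|\,|\psi(x)|\,dx\,dy .
\]
For $x\in B_{2\rho}$ (recall $2\rho\le 1$, since $2^{N+1}\rho\le 2$ forces $\rho$ small and $k_0\ge 3$) and $y\in\R\setminus B_1$ we have $|x-y|\approx |y|$ and $|x|\le 2\rho\le\tfrac12$, hence
\[
G_{\gamma,\alpha}(x,y)=\frac{1}{|x-y|^{1+2\gamma}|x|^\alpha|y|^\alpha}\le \frac{C}{|y|^{1+2\gamma+\alpha}\,|x|^\alpha}.
\]

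Next I would split $|u(x)-u(y)|\le |u(x)|+|u(y)|$. Bounding $|u(x)|\le \|u\|_{L^\infty(B_{2\rho})}$ and $|\psi(x)|\le 2\|u\|_{L^\infty(B_{2\rho})}$ on $B_{2\rho}$ (using $|u(x)-\bar u_\rho|\le 2\|u\|_{L^\infty(B_{2\rho})}$ and $0\le\vp\le1$), the two resulting pieces are
\[
C\|u\|_{L^\infty(B_{2\rho})}^2\int_{B_{2\rho}}\frac{d\mu_\alpha(x)}{1}\int_{\R\setminus B_1}\frac{dy}{|y|^{1+2\gamma+\alpha}}
\quad\text{and}\quad
C\|u\|_{L^\infty(B_{2\rho})}\int_{B_{2\rho}}d\mu_\alpha(x)\int_{\R\setminus B_1}\frac{|u(y)|}{|y|^{1+2\gamma+\alpha}}\,d\mu_\alpha(y).
\]
The inner $y$-integral in the first term is finite (since $1+2\gamma+\alpha>1$) and bounded by a universal constant; the $y$-integral in the second term is, by the very definition of the tail with $R=1$ and $x_0$ with $|x_0|\le M/2$ so that $B_1$ and $B_1(x_0)$ are comparable, controlled by $C\,\Tail(u,x_0,1)$. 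Wait — I should be careful here: the statement writes $\Tail(u,x_0,1)$ without the subscript $\alpha$, but in the list of notation only $\Tail_\alpha$ is defined, so I will read this as $\Tail_\alpha(u;x_0,1)=\int_{\R\setminus B_1(x_0)}|u(y)|\,|y-x_0|^{-1-2\gamma}\,d\mu_\alpha(y)$ and note the elementary comparison $\int_{\R\setminus B_1}|u|\,|y|^{-1-2\gamma-\alpha}d\mu_\alpha\le C\,\Tail_\alpha(u;x_0,1)$, which holds because for $|x_0|\le M/2$ and $|y|\ge1$ one has $|y-x_0|\approx|y|$ up to a constant depending only on $M$ (absorbed into $C$).

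Finally, $\int_{B_{2\rho}}d\mu_\alpha(x)=|B_{2\rho}|_{\mu_\alpha}\le C\rho^{1-\alpha}$ by Lemma \ref{lem-C5} (using $|x_0|=0$ here since the balls are centered at $x_0$ and the relevant case is $|x_0|\le 2\cdot 2\rho$, as $\rho$ is small — actually since all balls are centered at $x_0$ the quantity is simply $|B_{2\rho}(x_0)|_{\mu_\alpha}$, and if $|x_0|\le 4\rho$ this is $\le C\rho^{1-\alpha}$, while if $|x_0|\ge 4\rho$ it is $\le C\rho|x_0|^{-\alpha}\le C\rho^{1-\alpha}$ using $|x_0|\le M/2$; either way it is $\le C\rho^{1-\alpha}$ with $C$ depending on $M$). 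Collecting the two pieces yields
\[
\int_{\R\setminus B_1}\int_{B_R}G_{\gamma,\alpha}(x,y)|u(x)-u(y)||\psi(x)|\,dxdy\le C\rho^{1-\alpha}\|u\|_{L^\infty(B_{2\rho})}\bigl(\|u\|_{L^\infty(B_{2\rho})}+\Tail(u,x_0,1)\bigr),
\]
as desired. The only mildly delicate point — and the main obstacle — is keeping all the comparison constants ($|x-y|\approx|y|$, $|y-x_0|\approx|y|$, the value of $|B_{2\rho}|_{\mu_\alpha}$) uniform in $\gamma\in[\tfrac13,\tfrac12)$, $\alpha\in[0,\tfrac12]$ and in $\rho$; this is why the hypotheses $\gamma\ge\tfrac13$ (so $1+2\gamma+\alpha\ge\tfrac53>1$, giving a uniform bound on $\int_{\R\setminus B_1}|y|^{-1-2\gamma-\alpha}dy$) and $|x_0|\le M/2$, $2\rho\le1$ are used.
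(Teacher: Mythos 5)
The approach—split $|u(x)-u(y)|\le |u(x)|+|u(y)|$, pull out the $L^\infty$ factors on the support of $\psi$, and identify the $|u(y)|$ contribution with the tail—is the same as the paper's, but the key geometric bound you use is wrong because you have misread where the balls are centered. At the start of Subsection 3.1 the paper fixes $x_0$ and declares that \emph{all} balls and annuli in the section are centered at $x_0$, so $B_{2\rho}=B_{2\rho}(x_0)$ and $B_1=B_1(x_0)$; in particular $x\in\operatorname{supp}\psi$ only gives $|x-x_0|\le 2\rho$, not $|x|\le 2\rho$. The paper's (correct) replacement on the domain $x\in B_{2\rho}(x_0)$, $y\notin B_1(x_0)$ is
\begin{equation*}
|x-y|\ \ge\ \tfrac12\,|x_0-y|,
\end{equation*}
which makes the tail $\Tail_\alpha(u;x_0,1)=\int_{B_1(x_0)^c}|u(y)|\,|y-x_0|^{-1-2\gamma}\,d\mu_\alpha(y)$ appear \emph{exactly}, with no further comparison needed. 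Your bound $|x-y|\approx|y|$ and the claim $|y-x_0|\approx|y|$ uniformly for $|x_0|\le M/2$ both fail: take $x_0$ close to $\partial B_1(0)$ (say $x_0=1$, $y=1.01$, so $|y-x_0|\ll|y|$), or $|x_0|>1$ so that the origin lies in $B_1(x_0)^c$, in which case your first piece $\int_{B_1(x_0)^c}|y|^{-1-2\gamma-\alpha}\,dy$ actually diverges near $y=0$ (the exponent exceeds $1$). There is also a small inconsistency in your second displayed term, where you write $d\mu_\alpha(y)$ even though the factor $|y|^{-\alpha}$ has already been absorbed into $|y|^{-1-2\gamma-\alpha}$.

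The fix is simply to keep $|y-x_0|$ rather than $|y|$ throughout: the $y$-integral becomes
\begin{equation*}
\int_{B_1(x_0)^c}\frac{\|u\|_{L^\infty(B_{2\rho})}+|u(y)|}{|x_0-y|^{1+2\gamma}}\,\frac{dy}{|y|^{\alpha}},
\end{equation*}
the $|u(y)|$ part is literally the tail (recall $R^{2\gamma+\alpha}=1$ for $R=1$), and the remaining constant piece $\int_{B_1(x_0)^c}|x_0-y|^{-1-2\gamma}|y|^{-\alpha}\,dy$ is uniformly bounded (the possible singularity of $|y|^{-\alpha}$ at the origin is integrable since $\alpha<1$, and the decay at infinity is $|y|^{-1-2\gamma-\alpha}$). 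Your treatment of the $x$-integral via Lemma \ref{lem-C5}, giving $|B_{2\rho}(x_0)|_{\mu_\alpha}\le C\rho^{1-\alpha}$, is fine.
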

\begin{proof} We have
\begin{align}
\int_{\R\setminus B_1}\int_{B_R}&G_{\gamma,\alpha}(x,y)|u(x)-u(y)||\psi(x)|\,dxdy\\& \leq  C\|u\|_{L^\infty(B_{2\rho})} |B_{2\rho}|_{\mu_\alpha} \int_{B^c_1}\frac{\|u\|_{L^\infty(B_{2\rho})}+|u(y)|}{|x_0-y|^{1+2\gamma}}\,\frac{dy}{|y|^\alpha}\\&\leq C\rho^{1-\alpha}\|u\|_{L^\infty(B_{2\rho})}\left( \|u\|_{L^\infty(B_{2\rho})}+\Tail(u,x_0,1)\right).
\end{align}
Note that here we have used that, on this domain, $|x-y|\geq\frac12 |x_0-y|$ since  $\psi$ is supported in $B_{2\rho}$.
\end{proof}

We now proceed with the proof of the main result in this subsection.

\begin{proof}[Proof of Proposition \ref{Prop-C2}]
In the notation above, we set $\psi$ as introduced in \eqref{defpsi} and $R= 2^{k_0}\rho$.
 Writing
 \begin{equation}\label{note1}
 \psi(x)-\psi(y)=[u(x)-u(y)]-[1-\vp(x)][u(x)-u(y)]+[\vp(x)-\vp(y)][u(y)-\bar u_\rho],\end{equation}
 and using that $R> \rho$ and $\vp\equiv1$ in $B_\rho$ we obtain
 \begin{equation}\label{eq:hole}
 \begin{split}
 \int_{B_\rho} \int_{B_\rho}&G_{\gamma,\alpha}(x,y)| u(x)-u(y)|^2\,dxdy\\
 &\leq \int_{B_R} \int_{B_R} G_{\gamma,\alpha}(x,y)| \psi(x)-\psi(y)|^2\,dxdy \\
 &=\int_{B_R} \int_{B_R}G_{\gamma,\alpha}(x,y)[u(x)-u(y)][ \psi(x)-\psi(y)]\,dxdy  \\
 &\quad -\int_{B_R} \int_{B_R}G_{\gamma,\alpha}(x,y)[1-\vp(x)][u(x)-u(y)][ \psi(x)-\psi(y)]\,dxdy\\
 &\quad +\int_{B_R} \int_{B_R}G_{\gamma,\alpha}(x,y)[\vp(x)-\vp(y)][u(y)-\bar u_\rho][ \psi(x)-\psi(y)]\,dxdy\\&=:I_1+I_2+I_3.
 \end{split}
 \end{equation}
 In order to estimate $I_2$ we note that the integrand vanishes for $x\in B_\rho$, and by \eqref{note1} and the  Cauchy-Schwartz inequality,
\begin{equation*}
  \begin{split}
 |u(x)-u(y)||\psi(x)-\psi(y)|&\leq |u(x)-u(y)|\left\{|u(x)-u(y)|+|\vp(x)-\vp(y)||u(y)-\bar u_\rho|\right\}\\&\leq 2|u(x)-u(y)|^2+ 2|\vp(x)-\vp(y)|^2|u(y)-\bar u_\rho|^2.
\end{split}
  \end{equation*}
  In a similar way, the integrand in $I_3$ vanishes if both $x$ and $y$ are in $B_\rho$, and
  \begin{align}| \psi(x)-\psi(y)||\vp(x)-\vp(y)||u(y)-\bar u_\rho|\leq 2|u(x)-u(y)|^2+ 2|\vp(x)-\vp(y)|^2|u(y)-\bar u_\rho|^2. \end{align}
 Recalling the definition of the annulus \eqref{annuli} and   Lemma \ref{lem-C3} we have
   \begin{align}
    |I_2|+|I_3|&\leq 4 \int _{A_{\rho,R}} G_{\gamma,\alpha}(x,y)\left(|u(x)-u(y)|^2+ |\vp(x)-\vp(y)|^2|u(y)-\bar u_\rho|^2\right)\,dxdy\\&\leq 8\left( [u]^2_{R}-[u]_{\rho}^2\right)+4 \int _{A_{\rho,R}} G_{\gamma,\alpha}(x,y)  |\vp(x)-\vp(y)|^2|u(y)-\bar u_\rho|^2\,dxdy.  \label{bdi23}
    \end{align}
 In order to estimate this last term  we denote
   \begin{equation}I_4:= \int _{A_{\rho,R}} G_{\gamma,\alpha}(x,y)  |\vp(x)-\vp(y)|^2|u(y)-\bar u_\rho|^2\,dxdy.
\end{equation}
Let us estimate this term $I_4$. First, we observe by a direct computation (which involves expanding the integrand below) that
 \begin{equation*}\begin{split} |B_R\setminus B_\rho|^{-1}_{\mu_\alpha}\,|B_\rho|^{-1}_{\mu_\alpha}&\int_{B_R\setminus B_\rho}\int_{B_\rho}|u(x)-u(z)|^2\,d\mu_\alpha(x)\,d\mu_\alpha(z)\\ &= \fint_{ B_\rho}u^2(x)\,d\mu_\alpha (x)
    +\fint_{B_R\setminus B_\rho}u^2(z)\,d\mu_\alpha (z)-2 \hat u_\rho\,\bar u_\rho.\end{split}\end{equation*}
Second, we will also need the following bound
       \begin{align} |\hat u_\rho-\bar u_\rho|^2&\leq |B_R\setminus B_\rho|^{-1}_{\mu_\alpha}\,|B_\rho|^{-1}_{\mu_\alpha}\int_{B_R\setminus B_\rho}\int_{B_\rho}|u(x)-u(z)|^2\,d\mu_\alpha(x)\,d\mu_\alpha(z)\\&\leq  |B_R\setminus B_\rho|^{-1}_{\mu_\alpha}\,|B_\rho|^{-1}_{\mu_\alpha} (2R)^{1+2\gamma}\int_{B_R\setminus B_\rho}\int_{B_\rho}G_{\gamma,\alpha}(x,z)|u(x)-u(z)|^2\,dx dz\\
    &\leq 8R^{1+2\gamma}|B_R\setminus B_\rho|^{-1}_{\mu_\alpha}\,|B_\rho|^{-1}_{\mu_\alpha} \left( [u]^2_{R}-[u]_{\rho}^2\right). \label{est-propo-1}
    \end{align}
Note that     in the previous computation we used H\"older's inequality, the fact  that  $|z-y|\leq 2R$ for $z\in  B_\rho$, $y \in B_R\setminus 2B_\rho$, $\gamma\leq 1$,
    and Lemma  \ref{lem-C3}.

Third,  we observe from the gradient bound of $\vp$, and taking into account that  $|x-y|\leq 2R$ for $x\in  B_\rho$, $y \in B_R\setminus B_\rho$ that it holds
    \begin{equation*}
     \begin{split}
     \int_{A_{\rho,R}}G_{\gamma,\alpha}(x,y)&|\vp(x)-\vp(y)|^2|u(y)-\hat u_\rho|^2\,dxdy\\
    &\leq R^{1-2\gamma}\frac{4}{\rho^2}\int_{A_{\rho,R}} |u(y)-\hat u_\rho|^2\,d\mu_\alpha(x)d\mu_\alpha(y)\\&\leq CR^{1-2\gamma}\frac{|B_R|_{\mu_\alpha}}{\rho^2}\int_{B_R} |u(y)-\hat u_\rho|^2\,d\mu_\alpha(y)\\
   &=   CR^{1-2\gamma}\frac{|B_R|_{\mu_\alpha}}{\rho^2}\int_{B_R} \left|
   |B_R\setminus B_\rho|_{\mu_\alpha}^{-1}\int_{B_R\setminus B_\rho} (u(y)- u(z)) d\mu_\alpha(z) \right|^2\,d\mu_\alpha(y)\\
     &\leq CR^{1-2\gamma}\frac{|B_R|_{\mu_\alpha}}{\rho^2|B_R\setminus B_\rho|_{\mu_\alpha}}\int_{B_R}\int_{B_R\setminus B_\rho} |u(y)- u(z)|^2\,d\mu_\alpha(z)d\mu_\alpha(y) \\&\leq CR^{2}\frac{|B_R|_{\mu_\alpha}}{\rho^2|B_R\setminus B_\rho|_{\mu_\alpha}}\int_{B_R}\int_{B_R\setminus B_\rho} G_{\gamma,\alpha}(y,z)|u(y)- u(z)|^2\,dzd y.
     \end{split}
      \end{equation*}
From Lemma \ref{lem-C5}  and our assumption that $\frac{R}{\rho}= 2^{k_0} $ (see the statement of Lemma \ref{lem-C4}), we have \\$R^{2}\frac{|B_R|_{\mu_\alpha}}{\rho^2|B_R\setminus B_\rho|_{\mu_\alpha}}\leq C_{k_0}$, which combined with Lemma
  \ref{lem-C3} implies
  \begin{equation}   \label{est-propo-2} \int_{A_{\rho,R}}G_{\gamma,\alpha}(x,y)|\vp(x)-\vp(y)|^2|u(y)-\hat u_\rho|^2\,dxdy\leq C_{k_0}\left( [u]^2_{R}-[u]_{\rho}^2\right).
   \end{equation}
  Combining  estimates \eqref{est-propo-1} and \eqref{est-propo-2}, and taking into account that $$|u(y)-\bar u_\rho|^2\leq 2(|u(y)-\hat u_\rho|^2+2|\bar u_\rho-\hat u_\rho|^2),$$ we deduce the desired bound for $I_4,$
       \begin{equation}
       \begin{split}
       I_4&\leq C_{k_0}\left( [u]^2_{R}-[u]_{\rho}^2\right)\left(1+R^{1+2\gamma}|B_R\setminus B_\rho|^{-1}_{\mu_\alpha}\,|B_\rho|^{-1}_{\mu_\alpha} \int_{A_{\rho,R}} G_{\gamma,\alpha}(x,y)|\vp(x)-\vp(y)|^2\,dxdy\right) \\&\leq C_{k_0}\left( [u]^2_{R}-[u]_{\rho}^2\right)\left(1+\frac{R^{2}}{\rho^2} \frac{ |B_R|_{\mu_\alpha}}{ |B_\rho|_{\mu_\alpha}}\right)\\&\leq C_{k_0}\left( [u]^2_{R}-[u]_{\rho}^2\right) 
       \end{split}\label{bdi4}\end{equation} 
  where  $C_{k_0}$ is a constant that changes from line to line, and for the second inequality we have used
 \begin{equation*}   G_{\gamma,\alpha}(x,y)|\vp(x)-\vp(y)|^2\leq C G_{\gamma,\alpha}(x,y)\frac{|x-y|^2}{\rho^2}\leq C\frac{|x-y|^{1-2\gamma}}{\rho^2|x|^\alpha|y|^\alpha}\leq  C\frac{R^{1-2\gamma}}{\rho^2|x|^\alpha|y|^\alpha}.
 \end{equation*}

To conclude the proof we are left to estimate  $I_1$. Using $\psi$ as a test function in the weak formulation  of the PDE \eqref{PDE} (note that, as $\psi$ is compactly supported, it is equivalent to look at the weak formulation \eqref{weak-formulation} on $\mathbb R$) we obtain
  \begin{align*}
  \int_{\R} \frac{f(x)}{|x|^\theta} \psi(x) \,dx=\frac12I_1+\int_{\R\setminus B_R}\int_{B_R}G_{\gamma,\alpha}(x,y)[u(x)-u(y)]\psi(x)\,dxdy. \quad \end{align*}
%
Notice that the left hand side above can be bounded by
  \begin{align}\label{estimate-ff} \int_{\R} \frac{|f(x)|}{|x|^\theta} |\psi(x)| \,dx\leq 2 \|u\|_{L^\infty(B_{2\rho})} \|f\|_{L^\infty(B_{2\rho})}\int_{B_{2\rho}}\,\frac{dx}{|x|^\theta}\leq C  \|u\|_{L^\infty(B_{2\rho})} \|f\|_{L^\infty(B_{2\rho})}\rho^{1-\theta}.  \end{align}

  In summary,  going back to  $\frac{R}{\rho}= 2^{k_0} $,  \eqref{eq:hole}, \eqref{bdi23} and \eqref{bdi4} we have proved that
  \begin{equation*}
  \begin{split}
[u]_{\rho}^2=   \int_{B_\rho} \int_{B_\rho}G_{\gamma,\alpha}(x,y)| u(x)-u(y)|^2\,dxdy&\leq C(k_0)\left( [u]^2_{R}-[u]_{\rho}^2\right)+
    C_0  \|u\|_{L^\infty(B_{2\rho})} \|f\|_{L^\infty(B_{2\rho})}\rho^{1-\theta}\\
   &\quad +C_0\int_{\R\setminus B_R}\int_{B_R}G_{\gamma,\alpha}(x,y)|u(x)-u(y)||\psi(x)|\,dxdy.
  \end{split}
  \end{equation*}

Now we use  Lemmas \ref{lem-C4} and \ref{lem-C6} to bound the last integral above, and this completes the proof of  Proposition \ref{Prop-C2}.


\end{proof}

We conclude this subsection with one last technical lemma that will be used in the proof of Theorem \ref{regularity}.

\begin{lemma}\label{lem-38} We have
\begin{align*} [u]_{2}^2\leq   C\|u\|_{L^\infty(B_3)} \left( \|u\|_{L^\infty(B_3)} + \|f\|_{L^\infty(B_3)}+\Tail(u,x_0,3)\right).
\end{align*}
\end{lemma}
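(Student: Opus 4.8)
The plan is to estimate $[u]_2^2$ by using a cutoff function supported in $B_3$ together with the weak formulation of \eqref{PDE}. First I would fix $\vp\in C^\infty_c(B_3)$ with $0\le\vp\le 1$, $\vp\equiv 1$ on $B_2$ and $|\nabla\vp|\le C$, and set $w:=(u-\bar u)\vp$ where $\bar u:=\fint_{B_3}u\,d\mu_\alpha$; since $w$ is compactly supported it is an admissible test function in \eqref{weak-formulation} on $\mathbb R$. Writing (as in \eqref{note1})
$$w(x)-w(y)=[u(x)-u(y)]-[1-\vp(x)][u(x)-u(y)]+[\vp(x)-\vp(y)][u(y)-\bar u],$$
I would split the global energy $\int_{\mathbb R}\int_{\mathbb R}G_{\gamma,\alpha}(x,y)[u(x)-u(y)][w(x)-w(y)]\,dxdy$ and notice that, because $\vp\equiv1$ on $B_2$, the piece over $B_2\times B_2$ equals $[u]_2^2$; all other pieces are either controlled by the testing identity or by the gradient bound on $\vp$ on the annulus $B_3\setminus B_2$.

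The key steps in order: (i) use $w$ as a test function to get
$$\tfrac12\int_{\mathbb R}\int_{\mathbb R}G_{\gamma,\alpha}(x,y)[u(x)-u(y)][w(x)-w(y)]\,dxdy=\int_{\mathbb R}\frac{f(x)}{|x|^\theta}w(x)\,dx,$$
and bound the right-hand side (exactly as in \eqref{estimate-ff}) by $C\|u\|_{L^\infty(B_3)}\|f\|_{L^\infty(B_3)}$, absorbing the $|x|^{-\theta}$ singularity since $\theta<1$ and integrating over $B_3$; (ii) on the left side, isolate $[u]_2^2$ from the $B_2\times B_2$ contribution of the first bracket in the decomposition above; (iii) control the cross terms involving $[1-\vp(x)]$ and $[\vp(x)-\vp(y)]$ by Cauchy–Schwarz, using $G_{\gamma,\alpha}(x,y)|\vp(x)-\vp(y)|^2\le C|x-y|^{1-2\gamma}|x|^{-\alpha}|y|^{-\alpha}\le C|x|^{-\alpha}|y|^{-\alpha}$ on $B_3\times B_3$ (here $1-2\gamma>0$ and $|x-y|\le 6$), so these reduce to $C\|u\|_{L^\infty(B_3)}^2\,|B_3|_{\mu_\alpha}^2\le C\|u\|_{L^\infty(B_3)}^2$; (iv) handle the tail interaction $\int_{\mathbb R\setminus B_3}\int_{B_3}G_{\gamma,\alpha}(x,y)|u(x)-u(y)||w(x)|\,dxdy$, which appears when passing from the global energy to the localized one, exactly as in Lemma \ref{lem-C6}: since $w$ is supported in $B_3$ we have $|x-y|\ge\frac12|x-x_0|$ on this domain (recall $|x_0|\le\frac M2$ so $B_3$ is a fixed ball, or more simply $x_0$ is a fixed center), and the integral is bounded by $C\|u\|_{L^\infty(B_3)}(\|u\|_{L^\infty(B_3)}+\Tail(u,x_0,3))$.

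Collecting these bounds gives
$$[u]_2^2\le C\|u\|_{L^\infty(B_3)}\big(\|u\|_{L^\infty(B_3)}+\|f\|_{L^\infty(B_3)}+\Tail(u,x_0,3)\big),$$
as claimed. The only mild subtlety — the part I would be most careful about — is making sure the $|x|^{-\theta}$ and $|x|^{-\alpha}$ weights are genuinely harmless on $B_3$ uniformly in $\alpha\in[0,\tfrac12)$ and $\theta\in[0,\theta_0)$: this is where one uses $\theta_0<1$ and $\alpha<\tfrac12$ so that $\int_{B_3}|x|^{-\theta}\,dx$ and $|B_3|_{\mu_\alpha}$ are bounded by a constant independent of the parameters, and where one uses $1-2\gamma>0$ (valid for $\gamma<\tfrac12$) to discard the factor $|x-y|^{1-2\gamma}$ after bounding $|x-y|\le 6$ on $B_3\times B_3$. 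Everything else is a direct repetition of the Cauchy–Schwarz/cutoff bookkeeping already carried out in the proof of Proposition \ref{Prop-C2}.
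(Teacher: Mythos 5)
There is a gap in the treatment of the cross term, and it stems from a structural difference between your test function and the paper's.

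You test the weak formulation with $w=(u-\bar u)\vp$ and decompose $w(x)-w(y)$ as in \eqref{note1}. On $B_3\times B_3$ this yields
\begin{equation*}
\int\!\!\int_{B_3\times B_3} G_{\gamma,\alpha}[u(x)-u(y)][w(x)-w(y)]\,dxdy
=\int\!\!\int_{B_3\times B_3} G_{\gamma,\alpha}\,\vp(x)|u(x)-u(y)|^2\,dxdy + T,
\end{equation*}
where
\begin{equation*}
T:=\int\!\!\int_{B_3\times B_3} G_{\gamma,\alpha}\,[\vp(x)-\vp(y)][u(y)-\bar u][u(x)-u(y)]\,dxdy.
\end{equation*}
The first integral is $\geq [u]_2^2$, so the inequality closes only if $|T|$ is controlled by $C\|u\|_{L^\infty(B_3)}^2$ plus a term that can be reabsorbed. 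Cauchy--Schwarz as you describe gives
\begin{equation*}
|T|\leq \Big(\int\!\!\int_{B_3\times B_3}G_{\gamma,\alpha}|u(x)-u(y)|^2\Big)^{1/2}\Big(\int\!\!\int_{B_3\times B_3}G_{\gamma,\alpha}|\vp(x)-\vp(y)|^2|u(y)-\bar u|^2\Big)^{1/2}\leq [u]_3\cdot C\|u\|_{L^\infty(B_3)},
\end{equation*}
and the $[u]_3$ factor is \emph{not} under control: it is precisely the quantity you are trying to estimate (only at a slightly larger scale), and in the iteration of Proposition \ref{Prop-C2} it is exactly this factor that survives as the $\tau[u]^2_{2^{k_0}\rho}$ term. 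Bounding $T$ directly by replacing $|u(x)-u(y)|$ with $2\|u\|_{L^\infty(B_3)}$ does not help either, because then only one power of $|\vp(x)-\vp(y)|$ remains and $\int\!\!\int_{B_3\times B_3}G_{\gamma,\alpha}|\vp(x)-\vp(y)|\,dxdy\sim (1-2\gamma)^{-1}$ blows up as $\gamma\uparrow \tfrac12$. Your remark that $G_{\gamma,\alpha}|\vp(x)-\vp(y)|^2\leq C|x|^{-\alpha}|y|^{-\alpha}$ uniformly is correct, but it only controls the \emph{square} factor; the other factor in Cauchy--Schwarz is still $[u]_3$.

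To close the argument with your test function one must absorb: using $|\vp(x)-\vp(y)|\leq \vp(x)+\vp(y)$ (valid since $\vp\geq 0$) together with Young's inequality and the symmetrization $\int\!\!\int G_{\gamma,\alpha}\vp(x)|u(x)-u(y)|^2=\tfrac12\int\!\!\int G_{\gamma,\alpha}[\vp(x)+\vp(y)]|u(x)-u(y)|^2$, one gets $|T|\leq \tfrac12\int\!\!\int_{B_3\times B_3}G_{\gamma,\alpha}\vp(x)|u(x)-u(y)|^2 + C\|u\|_{L^\infty(B_3)}^2$, and the first piece can then be moved to the left. That step is missing from your write-up. The paper avoids the whole issue by testing with $\phi^2 u$ rather than $(u-\bar u)\vp$: the Caccioppoli identity \eqref{C7-1} turns the bad part into $|\phi(x)-\phi(y)|^2[u^2(x)+u^2(y)]$, which has \emph{two} powers of the cutoff difference and \emph{no} $u$-difference, so it is immediately bounded by $C\|u\|^2_{L^\infty(B_3)}$ uniformly in $\gamma<\tfrac12$. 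The rest of your plan (the treatment of the right-hand side, the tail piece, and the uniform-in-$\alpha,\theta$ integrability observations) matches the paper.
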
\label{lem-C7}

\begin{proof}
Let $\phi\in C_c^{\infty}(B_3)$ be such that $\phi\equiv 1$ in $B_\frac 52$ and $|\nabla \phi|\leq 4$. We write
 \begin{equation*}\begin{split}
 [u(x)-u(y)]&[\phi^2(x)u(x)-\phi^2(y)u(y)]\\&
 =\phi^2(x)[u(x)-u(y)]^2+ [u(x)-u(y)][\phi(x)+\phi(y)] [\phi(x)-\phi(y)]u(y)\\&\geq |u(x)-u(y)|^2\left\{\phi^2(x)-\frac 1 8(\phi(x)+\phi(y))^2\right\}-2|\phi(x)-\phi(y)|^2u^2(y).
 \end{split}\end{equation*}
 Interchanging $x$ with  $y$ in the above inequality, and then summing with it we get
 \begin{equation}\begin{split} &2[u(x)-u(y)][\phi^2(x)u(x)-\phi^2(y)u(y)]\\&\geq |u(x)-u(y)|^2[\phi^2(x)+\phi^2(y)-\frac14(\phi(x)+\phi(y))^2]-2|\phi(x)-\phi(y)|^2[u^2(y)+u^2(x)]\\ & \geq \frac12|u(x)-u(y)|^2[\phi^2(x)+\phi^2(y)]-2|\phi(x)-\phi(y)|^2[u^2(y)+u^2(x)] \label{C7-1}.
 \end{split}\end{equation}
 Taking $\phi^2 u$ as a test function in the weak formulation of \eqref{PDE} (again, it is enough to look at the weak formulation  \eqref{weak-formulation}  on $\mathbb R$),
  \begin{equation}\label{equation100}\begin{split}
2    \int \frac{f(x)}{|x|^{\theta}}&u(x)\phi^2(x)\,dx\\
    &=\left(\int_{B_3}\int_{B_3} +\int_{\R\setminus B_3}\int_{B_3} +\int_{B_3}\int_{\R\setminus B_3}    \right)G_{\gamma,\alpha}(x,y)[u(x)-u(y)][\phi^2(x)u(x)-\phi^2(y)u(y)]\,dxdy\\
   &=:I_1+I_2+I_3.
   \end{split}\end{equation}
   It follows from \eqref{C7-1} that
   $$I_1\geq \frac12[u]_{2}^2-C  \|u\|^2_{L^\infty(B_3)},$$
    and, in addition,
    \begin{align}
    |I_2|+|I_3|\leq  C\|u\|_{L^\infty(B_3)} \left( \|u\|_{L^\infty(B_3)} +\Tail(u,x_0,3)\|\right),
    \end{align}
   thanks an argument similar to the one to obtain Lemma \ref{lem-C6}. For the left hand side of \eqref{equation100} we argue as in \eqref{estimate-ff}, which yields the desired conclusion.
\end{proof}


\subsection{Proof of Theorem \ref{regularity}} \label{section:pfregthem}


 With the same notations as in Proposition  \ref{Prop-C2}  we have
  \begin{align}\label{Thm-3.4.1} [u]_\rho^2&\leq\tau [u]^2_{2^{k_0}\rho}+\frac{C_0}{1+C(k_0)}\sum_{k=k_0+1}^{N+1}2^{-k\gamma}[u]_{2^k\rho}^2 +C_1\rho^{\sigma_1}, \end{align}
    for some $\sigma_1\in(0,1)$ (which can be made independent of $\theta$ and $\alpha$), the constant $C_1\geq1$ depends on $\|u\|_{L^\infty(B_M)}$, $\Tail(u,x_0,1)$ and $\|f\|_{L^\infty(B_M)}$.

    We claim that \eqref{Thm-3.4.1} implies that there exist $C_2\geq 1$  and  $\sigma_2\in (0,\sigma_1)$  such that
    \begin{align}\label{claim-C2} [u]_\rho^2\leq C_2\rho^{\sigma_2} \left([u]_2^2+1\right)\end{align}  for every  $0<\rho\leq2$.  In order to prove our claim,  for given constants $C_2\geq 1$ and $\sigma_2\in (0,\sigma_1)$ we set
       $$\mathcal B=\mathcal B_{C_2,\sigma_2}:=\big\{\tilde\rho\in(0,2]:\text{ \eqref{claim-C2} holds true for every }\rho\in[\tilde\rho,2]\big\}.$$
    Note that for $C_2\geq 8$ and for any $\sigma_2\in [0,1]$ we have  $[\frac18,2]\subset\mathcal B$. Our claim would follow if we show that for some suitable $C_2$ and $\sigma_2$ we have that the infimum
    $$\rho_0:=\inf {\mathcal B}$$
     is equal to $0$.
%
We    proceed by contradiction assuming that for this choice of parameters
it holds  $\rho_0>0$ .
Then we can evaluate  \eqref{Thm-3.4.1} at $\rho=\delta \rho_0 $ with  $\delta\in (2^{-k_0},1]$.
    Observing that    if $2^{N+1}\rho\leq 2$ then
        $[u]_{2^k\rho}^2\leq [u]_{2}^2$ for $k\leq N+1$, we have
    \begin{align} [u]_{\delta\rho_0}^2& \leq C_2 (\delta\rho_0)^{\sigma_2} \left( \tau  2^{k_0\sigma_2}+\frac{C_0}{1+C(k_0)}\sum_{k=k_0+1}^{N+1}2^{-k(\gamma-\sigma_2)} +\frac{C_1}{C_2}(\delta\rho_0)^{\sigma_1-\sigma_2}\right)\left([u]_2^2+1\right).  \end{align}
      We first fix  $k_0\geq3$ (independent of $\gamma\geq\frac34$ and $\sigma_2\leq\frac14$) satisfying
       $$ \sum_{k=k_0+1}^\infty 2^{-k(\gamma-\sigma_2)}\leq\frac{1}{4 C_0},$$
       and then fix $\sigma_2\in \left(0,\min\{\sigma_1,\frac14\}\right)$ such that $$C_{k_0}\left(2^{k_0\sigma_2}-1\right)\leq\frac14.$$
        Finally, taking $C_2=8C_1(1+C(k_0))$, and recalling that $\tau=\frac{C(k_0)}{1+C(k_0)}$ we obtain
         $$\tau  2^{k_0\sigma_2}+\frac{C_0}{1+C(k_0)}\sum_{k=k_0+1}^N2^{-k(\gamma-\sigma_2)} +\frac{C_1}{C_2}(\delta\rho_0)^{\sigma_1-\sigma_2}\leq1.$$
         Therefore, for these choice of parameters we  have that $(2^{-k_0}\rho_0,\rho_0]\subset\mathcal B$, which contradicts that $\rho_0$ is the infimum.
        Hence necessarily $\inf \mathcal B=0$,  concluding the proof of \eqref{claim-C2}.

Finally we prove our theorem by observing that for $|x_0|<\frac M4$ and $0<\rho\leq 1$ we have
\begin{align}
\int_{B_\rho}\left|u(x)   -\fint _{B_\rho}u(y)\,dy\right|^2\,dx
 &\leq \frac 1 {2\rho}\int_{B_\rho} \int_{B_\rho} |u(x)-u(y)|^2 \,dxdy \\&\leq C(M) \rho^{2\gamma}\int_{B_\rho} \int_{B_\rho}G_{\gamma,\alpha} |u(x)-u(y)|^2 \,dxdy \\&\leq C(M,C_2) \rho^{2\gamma+\sigma_2} ([u]_2^2+1),
 \end{align}
 where the last inequality is equivalent to estimate \eqref{claim-C2}.
The theorem follows immediately  from Lemma \ref{lem-38} and  the Campanato embedding (see e.g. \cite[Theorem 5.5]{GM}) with $\sigma=\frac14\sigma_2$ and $\gamma_0=\frac12(1-\frac12\sigma_2) $.

\section{Compactness results}\label{section:compactness}

We  now work in the context of Theorem \ref{main-theorem}, namely we fix
$n=1$ and for $\ve>0$ small and we make  suitable choices of  $\gamma=\gamma_\ve\uparrow\frac12$, $\alpha=\alpha_\ve\to0$, $\beta=\beta_\ve\to0$ and $p=p_\ve\to\infty$. More precisely, we assume
\begin{align}\label{relations}   \ve p_\ve\to a\in(0,\infty),\quad \beta_\ve p_\ve\to b\in [0,1),\quad  \text{as} \quad\ve\downarrow 0.   \end{align}

 We denote  $\LL_\ve:=\LL_{\gamma_\ve,\alpha_\ve}$, $S_\ve:=S(\alpha_\ve,\beta_\ve)$ , $E_{\ve}:=E_{\gamma_\ve,\alpha_\ve,\beta_\ve}$, $G_\ve:=G_{\gamma_\ve,\alpha_\ve}$, where these quantities are given by \eqref{formula-L},  \eqref{eq-S}  \eqref{defi-E}, \eqref{defiG}, respectively.\\

In the context of Theorem \ref{main-theorem} we may assume that there exists an even  positive minimizer $u_\ve$ of \eqref{eq-S} which, in particular, is a solution of equation  \eqref{eq-extremal} (for the existence and properties of such minimizers recall Remark \ref{rem:existenceofparameters} and Theorem \ref{theorem:properties}).
In this case, we also know (from  Theorem \ref{theorem:properties}) that $\tilde u_\ve(x):=|x|^{-\alpha_\ve} u_\ve(x)$ is radially decreasing
 and behaves like $\tilde u_\ve(x)\sim |x|^{-\alpha_\ve}$ as $|x|\to 0$.   The last statement follows from a minor modification of Proposition 4.5 in \cite{Dipierro-Montoro-Peral-Sciunzi}, where the authors show that $u_\ve\in L^\infty(\mathbb{R})$  in the case  $\alpha_\ve=\beta_\ve$.
By
 replacing $u_\ve$ by $x\mapsto c_\ve u_\ve(R_\ve x)$ for some suitable $c_\ve>0$ and $R_\ve>0$ we can also assume that $u_\ve$ satisfies
\begin{align}\label{eq-uepsilon}   \LL_\ve u_\ve=\frac{1}{p_\ve}  \frac{1}{|x|^{\beta_\ve p_\ve}} (u_\ve)^{p_\ve-1},\quad   \max_{\overline {B_1}}u_\ve =1,\end{align}
as in the statement of Theorem \ref{main-theorem}.

The main objective of this section is to show that the function
\begin{equation}\label{eta-epsilon}\eta_\ve:=p_\ve(u_\ve-1)
\end{equation}
converges as $\ve\downarrow 0$.
Note that $\eta_\ve$ is a solution to
\begin{align}  \LL_\ve \eta_\ve=\frac{\left(1+\frac{\eta_\ve}{p_\ve}\right)^{p_\ve-1}}{|x|^{\beta_\ve p_\ve}}.
\end{align}

Our main result is, thus,

\begin{theorem}\label{thm-limit-function}  Up to a subsequence,     $\eta_\ve\to\eta_0$ in $C^0_{loc}(\R)$. Moreover, if we define
$$\eta:=\eta_0+\log\zeta_\frac12,$$
then $\eta\in L^1_\frac 12(\R)$ and it is a solution to
\begin{equation}\label{statement-thm}
(-\D)^\frac12\eta=|x|^{-b} e^\eta\quad\text{in }\R,\quad \kappa:= \int_\R |x|^{-b} e^\eta \,dx=2 \pi(1-b)<\infty.
\end{equation}
\end{theorem}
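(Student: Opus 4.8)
\emph{Proof plan.} Unlike \cite{DET} there is no explicit extremal, so the whole argument is built on \emph{parameter‑independent a priori estimates}, with Theorem \ref{regularity} as the main engine. First I would prove regularity of $u_\ve$ and identify its limit. Since $u_\ve$ is even, $\tilde u_\ve=|x|^{-\alpha_\ve}u_\ve$ is radially non‑increasing with $\tilde u_\ve(x)\sim|x|^{-\alpha_\ve}$ as $x\to0$, and $\max_{\overline{B_1}}u_\ve=1$, one gets $0\le u_\ve(x)\le C(1+|x|)^{\alpha_\ve}$ on $\R$. Because $\alpha_\ve p_\ve\le\beta_\ve p_\ve$ stays bounded and $\theta_\ve:=\beta_\ve p_\ve\le\theta_0<1$, the right‑hand side $\tfrac{1}{p_\ve}|x|^{-\theta_\ve}(u_\ve)^{p_\ve-1}$ of \eqref{eq-uepsilon} is uniformly bounded on every $B_M$, and $\|u_\ve\|_{L^1_q(\R)}\le C$ with $q=1+2\gamma_\ve+\alpha_\ve>1$; hence Theorem \ref{regularity} gives $\|u_\ve\|_{C^\sigma(B_M)}\le C(M)$ uniformly. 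By Arzel\`a--Ascoli and diagonal extraction, $u_\ve\to u_0$ in $C^0_{loc}(\R)$ along a subsequence, with $0\le u_0\le1$. Passing to the limit in the weak formulation of \eqref{eq-uepsilon}, the prefactor $\tfrac1{p_\ve}$ kills the nonlinearity, while $\alpha_\ve\to0$, $C_{\gamma_\ve,\alpha_\ve}\to0$ (Lemma \ref{lemma:constant}) and $\gamma_\ve\to\tfrac12$ turn $\LL_\ve$ into $\varsigma_{1/2}^{-1}(-\Delta)^{1/2}$; thus $u_0$ is a bounded solution of $(-\Delta)^{1/2}u_0=0$ on $\R$, hence $u_0\equiv1$ by the Liouville theorem for the half‑Laplacian, i.e.\ $u_\ve\to1$ in $C^0_{loc}(\R)$.

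Next I would establish uniform bounds on $\eta_\ve$. The upper bound is easy: $\eta_\ve\le0$ on $\overline{B_1}$ and $\eta_\ve(x)\le p_\ve((1+|x|)^{\alpha_\ve}-1)\le C\log(2+|x|)$ for $|x|\ge1$, so $g_\ve:=(u_\ve)^{p_\ve-1}$ is uniformly bounded on $B_M$ and $\|\eta_\ve^+\|_{L^1_q}\le C$. The \emph{lower} bound on $\eta_\ve$ --- equivalently, the exclusion of concentration/blow‑down of the profiles $u_\ve$ --- is the crux. I would argue by contradiction: if $\inf_{B_M}\eta_\ve\to-\infty$, then a blow‑down rescaling of $u_\ve$ about a concentration point, combined with the classification of entire solutions of the (singular, when the point is $0$) equation $(-\Delta)^{1/2}w=c|x|^{-b}e^w$, produces a quantized concentrated mass $\ge 2\pi(1-b)$ for $h_\ve:=|x|^{-\theta_\ve}g_\ve$, incompatible with the normalization $\max_{\overline{B_1}}u_\ve=1$ (which anchors the profile in $\overline{B_1}$ at the finite height $u_\ve=1$) and with the evenness of $u_\ve$. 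This Brezis--Merle‑type dichotomy yields $\|\eta_\ve\|_{L^\infty(B_M)}\le C(M)$; en route it gives a uniform mass bound $\int_\R h_\ve\le C$, obtained by evaluating at the maximum point of $u_\ve$ the Green potential representation $p_\ve u_\ve=\int_\R\mathcal G_\ve(\cdot,y)|y|^{-\theta_\ve}g_\ve(y)\,dy$ (licit because $2\gamma_\ve<1$, so the fundamental solution of $(-\Delta)^{\gamma_\ve}$ on $\R$ decays). Granting the $L^\infty_{loc}$ and $L^1_q$ bounds, Theorem \ref{regularity} applied to $\LL_\ve\eta_\ve=h_\ve$ yields a uniform $C^\sigma(B_M)$ bound on $\eta_\ve$, so $\eta_\ve\to\eta_0$ in $C^0_{loc}(\R)$ up to a subsequence.

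Finally I would pass to the limit in the equation and compute the mass. On compacts $g_\ve=(1+\eta_\ve/p_\ve)^{p_\ve-1}\to e^{\eta_0}$ uniformly and $|x|^{-\theta_\ve}\to|x|^{-b}$, while $\LL_\ve\eta_\ve\rightharpoonup\varsigma_{1/2}^{-1}(-\Delta)^{1/2}\eta_0$ as before, the Hardy term $C_{\gamma_\ve,\alpha_\ve}|x|^{-2\gamma_\ve}\eta_\ve$ vanishing in the limit. Setting $\eta:=\eta_0+\log\zeta_\frac12$, with $\log\zeta_\frac12$ absorbing the limiting multiplicative constant of the operator, this gives $(-\Delta)^{1/2}\eta=|x|^{-b}e^\eta$ weakly in $\R$; that $\eta\in L^1_\frac12(\R)$ follows from the upper bound on $\eta$ and the decay of $e^\eta$ afforded by the potential representation. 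Then $\kappa_b=\int_\R|x|^{-b}e^\eta<\infty$ by Fatou together with $\int_\R h_\ve\le C$, and the exact value $\kappa_b=2\pi(1-b)$ follows from the classification of finite‑mass solutions of the singular Liouville equation \cite{Galvez-Jimenez-Mira,Zhang-Zhou}, or directly from the scaling (Pohozaev) identity for $(-\Delta)^{1/2}\eta=|x|^{-b}e^\eta$.

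The main obstacle is the uniform lower bound on $\eta_\ve$ in the second step, i.e.\ ruling out concentration of the extremals $u_\ve$: this is exactly where the absence of an explicit bubble (available to \cite{DET}) bites, and is the reason the parameter‑independent regularity estimate of Theorem \ref{regularity} is needed.
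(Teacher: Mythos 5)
Your plan correctly identifies the overall architecture (uniform a priori bounds plus Theorem \ref{regularity} substituting for an explicit bubble, followed by the classification of singular Liouville solutions for the mass), and several pieces line up with the paper. But the crucial step where you yourself locate the difficulty — ruling out $\inf_{B_M}\eta_\ve\to-\infty$ — is where the plan breaks down.

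You propose a Brezis--Merle blow-down with mass quantization. That machinery addresses the \emph{blow-up} scenario ($\max\eta_\ve\to+\infty$), where the nonlinear term $e^{\eta_\ve}$ concentrates and the rescaled limit solves a nonlinear Liouville equation with quantized mass. Here the normalization $\max_{\overline{B_1}}u_\ve=1$ forces $\eta_\ve\le 0$ on $\overline{B_1}$ and $\eta_\ve\le C\log(2+|x|)$ globally, so blow-up of $\eta_\ve^+$ is excluded from the start. What must be excluded is \emph{vanishing}, and in that regime the right-hand side $\frac{1}{p_\ve}|x|^{-\beta_\ve p_\ve}(1+\eta_\ve/p_\ve)^{p_\ve-1}$ shrinks rather than concentrates; a blow-down rescaling therefore produces a \emph{linear} (half-Laplacian) equation, not a nonlinear Liouville profile, and there is no quantized mass to contradict. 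Moreover, the total mass $\int_\R h_\ve$ is already bounded by a universal constant (Proposition \ref{propo-1}), so a lower bound $\ge 2\pi(1-b)$ on a concentrated portion would not by itself be contradictory. The paper's argument (Lemma \ref{lem-L1/2}) is exactly the linear analogue you need: one sets $\mu_\ve:=\|\eta_\ve\|_{L^2(B_3)}$ and, if $\mu_\ve\to\infty$, divides to form $\tilde\eta_\ve:=\eta_\ve/\mu_\ve$; the normalization makes the nonlinearity $o_\ve(1)$ uniformly on compacts, the monotonicity of the radial profile converts the resulting $L^1_{q_\ve}$ control into local $L^\infty$ control, Theorem \ref{regularity} gives compactness, and the limit solves $(-\Delta)^{1/2}\tilde\eta=0$ in $\R$ with $\tilde\eta\in L^1_2$, hence $\tilde\eta$ is constant (Lemma~2.4 in \cite{Hyd}); this contradicts $\max_{\overline{B_1}}\tilde\eta=0$ together with $\|\tilde\eta\|_{L^2(B_3)}=1$. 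A nontrivial preliminary ingredient, which your sketch glosses over, is the test-function estimate of Lemma \ref{lem-est-1} combined with the duality pairing $\int\eta_\ve\,\LL_\ve\vp=\int\vp\,\LL_\ve\eta_\ve$, which is what converts the one-sided bound from Lemma \ref{lem-outside} into a two-sided $L^1_{q_\ve}$ bound on $\eta_\ve$ outside $B_3$ in terms of $\int_{B_3}|\eta_\ve|/|x|^{\alpha_\ve}$; the compensated structure $\LL_\ve\vp<0$ outside $B_2$ is essential there.

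Two further remarks. The uniform mass bound $\int_\R|x|^{-\beta_\ve p_\ve}(u_\ve)^{p_\ve}\le C$ is obtained in the paper from the variational estimate $S_\ve\le C/p_\ve$ via the explicit competitor $(1+x^2)^{-\delta_\ve}$ (Proposition \ref{propo-1}); the Green-potential route you suggest is not developed in the paper and would require constructing and estimating the fundamental solution of the weighted operator $\LL_\ve$ uniformly in $\ve$, which is a nontrivial detour. Finally, the intermediate step in which you first prove $u_\ve\to1$ by applying Theorem \ref{regularity} directly to $u_\ve$ is sound but superfluous: it is an immediate corollary of $\eta_\ve\to\eta_0$ and does not feed back into obtaining the necessary bounds on $\eta_\ve$.
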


Note that the convergence statement of the previous result follows from Theorem \ref{regularity} as long as we can prove its hypotheses, namely a local uniform bound for $\eta_\varepsilon$ and a bound (independent of $\varepsilon$) for $\|\eta_\varepsilon \|_{L^1_q(\mathbb R)}$ with  $q=1+2\gamma_\varepsilon+\alpha_\varepsilon.$  We divide the proof of this last statement into several intermediate steps.\\

\begin{proposition} \label{propo-1}
We have, as $\ve \downarrow 0$,
\begin{equation}\label{S-bounded}S_\ve\leq \frac {C}{p_\ve}
\end{equation} and, in particular,
\begin{equation}
\label{estimate-u}
\int_\R \frac{(u_\ve)^{p_\ve}}{|x|^{\beta_\ve p_\ve}}\, dx\leq C.
\end{equation}\end{proposition}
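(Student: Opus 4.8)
The plan is to first establish the bound \eqref{S-bounded} on the best constant by testing the quotient $E_\ve$ against a carefully chosen competitor, and then to read off \eqref{estimate-u} from \eqref{S-bounded} using the Euler--Lagrange equation \eqref{eq-uepsilon} and the scale invariance of $E_\ve$.

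For \eqref{S-bounded} I would use the logarithmic cut-off, which is the natural capacity-type competitor near the critical exponent $\gamma=\tfrac12$: with $N_\ve:=e^{p_\ve}$, put
\[
\chi_\ve(x):=\max\Big\{0,\ \min\Big\{1,\ \tfrac{\log(N_\ve/|x|)}{\log N_\ve}\Big\}\Big\},
\]
so that $\chi_\ve\equiv1$ on $B_1$, $\chi_\ve\equiv0$ outside $B_{N_\ve}$, $0\le\chi_\ve\le1$, and $|\nabla\chi_\ve(x)|\le(|x|\log N_\ve)^{-1}$ on $B_{N_\ve}\setminus B_1$. Being Lipschitz with compact support, $\chi_\ve\in D_{\gamma_\ve,\alpha_\ve}(\R)\setminus\{0\}$, hence $S_\ve\le E_\ve[\chi_\ve]$. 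The denominator is immediate: since $\chi_\ve\equiv1$ on $B_1$ and $0\le\beta_\ve p_\ve<1$,
\[
2\le\frac{2}{1-\beta_\ve p_\ve}=\int_{B_1}\frac{dx}{|x|^{\beta_\ve p_\ve}}\le\int_\R\frac{\chi_\ve^{p_\ve}}{|x|^{\beta_\ve p_\ve}}\,dx\le\int_{B_{N_\ve}}\frac{dx}{|x|^{\beta_\ve p_\ve}}=\frac{2\,N_\ve^{1-\beta_\ve p_\ve}}{1-\beta_\ve p_\ve},
\]
and after raising to the power $2/p_\ve$ and using $N_\ve^{1-\beta_\ve p_\ve}=e^{p_\ve(1-\beta_\ve p_\ve)}$ with $\beta_\ve p_\ve\to b<1$, this quantity stays between two positive constants independent of $\ve$.

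The crux is the numerator estimate $\|\chi_\ve\|_{\gamma_\ve,\alpha_\ve}^2\le C/p_\ve$. The relevant algebraic identity, from \eqref{pdef} with $n=1$, is
\[
1-2\gamma_\ve-2\alpha_\ve=\frac{2}{p_\ve}-2\beta_\ve=\frac{2(1-\beta_\ve p_\ve)}{p_\ve},
\]
which is comparable to $1/p_\ve$; with $N_\ve=e^{p_\ve}$ this makes $N_\ve^{1-2\gamma_\ve-2\alpha_\ve}=e^{2(1-\beta_\ve p_\ve)}$ bounded while $\log N_\ve=p_\ve$. I would split the Gagliardo integral into $\{|x-y|\le1\}$ and $\{|x-y|>1\}$. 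On the diagonal region one uses $(\chi_\ve(x)-\chi_\ve(y))^2\le C|x-y|^2\big((1+|x|)^2\log^2 N_\ve\big)^{-1}$ together with the uniform bound on $\int_{-1}^1|t|^{1-2\gamma_\ve}\,dt$, giving a contribution of order $(\log N_\ve)^{-2}=p_\ve^{-2}$. On the far region one combines the Lipschitz bound $(\chi_\ve(x)-\chi_\ve(y))^2\le(\log N_\ve)^{-2}\big(\log(|x|/|y|)\big)^2$, the pointwise bound $\chi_\ve\le1$, and the support restriction; after elementary one-dimensional estimates each surviving piece is, up to uniformly bounded factors, either $(\log N_\ve)^{-2}\int_1^{N_\ve}t^{-2\gamma_\ve-2\alpha_\ve}\,dt\asymp p_\ve^{-2}\cdot\tfrac{N_\ve^{1-2\gamma_\ve-2\alpha_\ve}}{1-2\gamma_\ve-2\alpha_\ve}\asymp p_\ve^{-1}$, or exponentially small in $p_\ve$ (the contributions involving $|x|>N_\ve$, controlled by $N_\ve^{-2\gamma_\ve-\alpha_\ve}=e^{-p_\ve(2\gamma_\ve+\alpha_\ve)}\le e^{-p_\ve/2}$). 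Putting the three steps together yields $S_\ve\le E_\ve[\chi_\ve]\le C/p_\ve$.

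Finally, \eqref{estimate-u} follows from \eqref{S-bounded}: testing the weak formulation of \eqref{eq-uepsilon} with $u_\ve$ itself — equivalently, using the identity $\|u\|_{\gamma,\alpha}^2=2\int u\,\mathcal L_{\gamma,\alpha}u$ — gives $\|u_\ve\|_{\gamma_\ve,\alpha_\ve}^2=\tfrac{2}{p_\ve}\int|x|^{-\beta_\ve p_\ve}u_\ve^{p_\ve}\,dx=:\tfrac{2}{p_\ve}I_\ve$, where $I_\ve<\infty$ because $u_\ve\in D_{\gamma_\ve,\alpha_\ve}$ and the CKN inequality \eqref{ineq_u} forces the denominator of $E_\ve$ to be finite on nonzero elements. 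Since $E_\ve$ is invariant under dilations and positive rescalings, the normalized $u_\ve$ is still a minimizer, so $S_\ve=E_\ve[u_\ve]=\|u_\ve\|^2/I_\ve^{2/p_\ve}=\tfrac{2}{p_\ve}I_\ve^{1-2/p_\ve}$; then \eqref{S-bounded} gives $I_\ve^{1-2/p_\ve}\le C/2$, i.e. $I_\ve\le(C/2)^{p_\ve/(p_\ve-2)}$, which is bounded since $p_\ve\to\infty$. The main obstacle is the far-region part of the numerator estimate: each such term diverges as the cut-off radius $N_\ve\to\infty$ (for $\gamma<\tfrac12$ a point has positive $H^\gamma$-capacity), and it is only the simultaneous degeneration $\gamma_\ve\to\tfrac12$, quantified by $1-2\gamma_\ve-2\alpha_\ve\asymp1/p_\ve$ and matched by the choice $N_\ve=e^{p_\ve}$, that keeps all of them of size $O(1/p_\ve)$.
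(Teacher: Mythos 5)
Your proof is correct, but it takes a genuinely different route from the paper for the key estimate \eqref{S-bounded}. The paper's test function is the power--law bubble $\vp_\ve(x)=(1+x^2)^{-\delta_\ve}$ with $\delta_\ve\asymp 1/p_\ve$ chosen so that $2\gamma_\ve+2\alpha_\ve+4\delta_\ve=1+4/p_\ve$; the heart of its argument is the pointwise bound $f_\ve(x):=\int_0^\infty\frac{(\vp_\ve(x)-\vp_\ve(y))^2}{|x-y|^{1+2\gamma_\ve}|y|^{\alpha_\ve}}\,dy\lesssim\frac{\delta_\ve^2}{1+x^{2\gamma_\ve+\alpha_\ve+4\delta_\ve}}$, obtained by splitting the $y$-integral at $\frac{x}{2}$ and $2x$ and integrating by parts, after which $\mathcal E_\ve(\vp_\ve)\lesssim\delta_\ve^2\int_0^\infty\frac{dx}{1+x^{1+4/p_\ve}}\lesssim\delta_\ve^2 p_\ve\lesssim 1/p_\ve$. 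You instead use the logarithmic (capacity-type) cutoff with $N_\ve=e^{p_\ve}$ and observe that the regime is calibrated exactly so that $N_\ve^{1-2\gamma_\ve-2\alpha_\ve}=e^{2(1-\beta_\ve p_\ve)}$ stays bounded while $1-2\gamma_\ve-2\alpha_\ve\asymp 1/p_\ve$, so that the dangerous Hardy-type integral $\int_1^{N_\ve}x^{-2\gamma_\ve-2\alpha_\ve}\,dx\asymp p_\ve$ is exactly compensated by the prefactor $(\log N_\ve)^{-2}=p_\ve^{-2}$. Both competitors reflect the near-critical degeneration and both give $O(1/p_\ve)$; the paper's bubble is smooth (no corner terms) and is closer in profile to the eventual extremal after the ground-state change of variables, while your cutoff is more elementary at the level of computations (mostly power and log integrals) but requires a case analysis around the two kinks at $|x|=1$ and $|x|=N_\ve$, which you treat correctly but somewhat schematically. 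Two minor points to make rigorous in a full write-up: first, $\chi_\ve$ is Lipschitz with compact support, not $C^\infty$, so admissibility in $D_{\gamma_\ve,\alpha_\ve}(\R)$ should be justified by a short mollification argument; second, the transition in the "far region" between the bound $|\chi_\ve(x)-\chi_\ve(y)|\le(\log N_\ve)^{-1}|\log(|x|/|y|)|$ and the crude bound $\chi_\ve\le 1$ must be made at the scale $|y|\asymp N_\ve$ (i.e.\ at the cutoff edge), else the naive estimate $(\chi_\ve(x)-\chi_\ve(y))^2\le\chi_\ve(x)^2$ over the whole far range produces a diverging $O(p_\ve)$ term; the way you describe the surviving pieces does implicitly perform this split, but it is worth stating explicitly. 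Your derivation of \eqref{estimate-u} from \eqref{S-bounded} is the same as the paper's.
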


\begin{proof}  For ${\delta_\ve}>0$ small (to be chosen later) we set
$$\vp_\ve(x)=(1+x^2)^{-{\delta_\ve}},$$
which will be used as a test function in \eqref{defi-E}.
Since $\vp$ is even
$$\mathcal E_\ve(\vp_\ve):=\int_{\R}\int_\R\frac{(\vp_\ve(x)-\vp_\ve(y))^2}{|x-y|^{1+2{\gamma_\ve}}|x|^{\alpha_\ve}|y|^{\alpha_\ve}}\,dydx
\leq4\int_0^\infty\int_0^\infty \frac{(\vp_\ve(x)-\vp_\ve(y))^2}{|x-y|^{1+2{\gamma_\ve}}|x|^{\alpha_\ve}|y|^{\alpha_\ve}}\,dydx. $$
For $x\geq0$ we set
$$f_\ve(x):=\int_0^\infty \frac{(\vp_\ve(x)-\vp_\ve(y))^2}{|x-y|^{1+2{\gamma_\ve}}|y|^{\alpha_\ve}}\,dy.$$
We claim that
\begin{align}  \label{est-6.4} f_\ve(x)\leq \frac{C{(\delta_\ve)}^2}{1+x^{2{\gamma_\ve}+{\alpha_\ve}+4{\delta_\ve}}},\quad x\geq0.\end{align}
First we prove the claim for $x\geq 4$. We write \begin{align*}   f_\ve(x)&= \left(\int_0^\frac x2 +\int_\frac x2^{2x}+\int_{2x}^\infty\right)   \frac{ (\vp_\ve(x)-\vp_\ve(y))^2}{|x-y|^{1+2{\gamma_\ve}}|y|^{\alpha_\ve}}\,dy\\&=:I_1+I_2+I_3.\end{align*}
We use that  for $y\geq x\geq4$,
\begin{align*}  |\vp_\ve(x)-\vp_\ve(y)|&=\left|\int_ x^y  \vp_\ve'(t)\,dt\right|\leq C{\delta_\ve}\int_x ^y t^{-1-2{\delta_\ve}}\,dt \\&\leq C\left( \frac{1}{x^{2{\delta_\ve}}}-\frac{1}{ y ^{2{\delta_\ve}}}\right) \\&=:C g_\ve(x,y).\end{align*}
  Changing the variable  as $y\mapsto x y$ yields
  \begin{align*}  I_3 &\leq C\int_{2x}^\infty \frac{1}{y^{1+2{\gamma_\ve}+{\alpha_\ve}}} g_\ve^2(x,y)\, dy\\&=\frac{ C}{x^{2{\gamma_\ve}+{\alpha_\ve}+4{\delta_\ve}}}\int_{2}^\infty \frac{1}{y^{1+2{\gamma_\ve}+{\alpha_\ve}}}g_\ve^2(1,y)\,dy.
  \end{align*}
    Now, integration by parts gives, taking into account that the constant $C$ stays uniformaly bounded in $\epsilon$ when we take the limit ${\gamma_\ve}\to\frac12$, ${\alpha_\ve}\to 0$ and ${\delta_\ve}\to 0$,  
  \begin{align*}
  \int_{2}^\infty \frac{1}{y^{1+2{\gamma_\ve}+{\alpha_\ve}}}g_\ve^2(1,y)\,dy&
  \leq C \int_{2}^\infty \frac{1}{y^{2{\gamma_\ve}+{\alpha_\ve}}}g_\ve(1,y)\partial_y g_\ve(1,y)\,dy
  {+ C\left(1-\frac{1}{2^{2\delta_\ve}}\right)^2} \\
  & \leq C {\delta_\ve} \int_{2}^\infty\frac{dy}{y^{2{\gamma_\ve}+{\alpha_\ve}+1+2{\delta_\ve}}}g_\ve(1,y)\,dy+ { C\delta_\ve^2}\\
  & \leq C\,{\delta_\ve} \int_{2}^\infty \frac{1}{y^{2{\gamma_\ve}+{\alpha_\ve}+2{\delta_\ve}}} \partial_y g_\ve(1,y)\,dy  {+ C\delta_\ve^2}\\\ &
  \leq  C\, ({\delta_\ve})^2\end{align*}
  and hence,
  $$I_3\leq \frac{C({\delta_\ve})^2}{x^{2{\gamma_\ve}+{\alpha_\ve}+4{\delta_\ve}}}.$$
    In a similar way, changing the variable $y\mapsto \frac xy$, we obtain
     \begin{align*}I_1&\leq  \frac{C}{x^{1+2{\gamma_\ve}}} \int_0^\frac x2 y^{-{\alpha_\ve}}(\vp_\ve(x)-\vp_\ve(y))^2\,dy\\&= \frac{C}{x^{2{\gamma_\ve}+{\alpha_\ve}+4{\delta_\ve}}} \int_2^\infty y^{{\alpha_\ve}-2}  g_\ve^2(1,\tfrac 1y)\,dy  \\ &\leq \frac{C({\delta_\ve})^2}{x^{2{\gamma_\ve}+{\alpha_\ve}+4{\delta_\ve}}}.
     \end{align*}
     Finally, using that
     $$|\vp_\ve(x)-\vp_\ve(y)|\leq C{\delta_\ve}\frac{|x-y|}{x^{1+2{\delta_\ve}}},\quad \text{for}\quad \frac x2\leq y\leq 2x,$$
     we bound
     \begin{align*}
     I_2\leq \frac{C{(\delta_\ve)}^2}{x^{2+4{\delta_\ve}+{\alpha_\ve}}}\int_\frac x2^{2x} |x-y|^{1-2{\gamma_\ve}}\,dy
     \leq \frac{C({\delta_\ve})^2}{x^{2{\gamma_\ve}+{\alpha_\ve}+4{\delta_\ve}}}.
      \end{align*}
      Combining the above estimates  yields the inequality in \eqref{est-6.4} for $x\geq4$.  Let us now  consider  the case $0\leq x\leq4$. We write
      \begin{align*}   f_\ve(x)&= \left(\int_0^8 +  \int_{8}^\infty\right)   \frac{ (\vp_\ve(x)-\vp_\ve(y))^2}{|x-y|^{1+2{\gamma_\ve}}|y|^{\alpha_\ve}}\,dy\\
      &=:J_1+J_2.
      \end{align*}
    As in the estimate of $I_2$, taking into account that $|\vp_\ve(x)-\vp_\ve(y)|\leq C\delta_\ve|x-y|$, one  obtains $J_1\leq C({\delta_\ve})^2. $
    To bound $J_2$ note first that for $0\leq x\leq y$ holds
    \begin{align*}
   | \vp_\ve(x)-\vp_\ve(y)|=\left|2{\delta_\ve}\int_x^y \frac{t}{(1+t^2)^{1+{\delta_\ve}}}\, dt \right|.
    \end{align*}
    Since for
  $0\leq x\leq t\leq y$ holds
  $$\frac{t}{(1+t^2)^{1+\delta_\ve}}\leq  C\frac{t^\frac14 t^\frac 34}{(1+t)^{2+2\delta_\ve}}\leq Cy^\frac14 \frac{(1+t)^\frac34}{(1+t)^{2+2\delta_\ve}} =Cy^\frac14 \frac{ 1}{(1+t)^{\frac 54+2\delta_\ve}},$$
  we have
       \begin{align*}
   | \vp_\ve(x)-\vp_\ve(y)|
   \leq C{\delta_\ve} y^\frac14\int_0^\infty\frac{dt}{(1+t)^{\frac 54+2{\delta_\ve}}}\leq C{\delta_\ve} y^\frac14.
    \end{align*}

    Then, as $\frac12+2{\gamma_\ve}+{\alpha_\ve}\to \frac32$,
    $$J_2\leq C \int_8^\infty  \frac{ (\vp_\ve(x)-\vp_\ve(y))^2}{y^{1+2{\gamma_\ve}+{\alpha_\ve}} }\,dy\leq  C{(\delta_\ve)}^2  \int_8^\infty  \frac{  1}{y^{\frac12+2{\gamma_\ve}+{\alpha_\ve}} }\,dy\leq C{(\delta_\ve)}^2.$$
    This proves  inequality   \eqref{est-6.4}.\\

Now we chose $\delta_\ve>0$ such that $2\gamma_\ve+2\alpha_\ve+4\delta_\ve =1+\frac{4}{p_\ve}$.  Then, recalling the definition for $p_\ve$ in \eqref{pdef}, one obtains
$$2\delta_\ve=\frac{3-p_\ve\beta_\ve}{p_\ve}\in\left( \frac{2}{p_\ve},\frac{4-b}{p_\ve}\right),$$
thanks to our choice \eqref{relations}.  Therefore, on one hand, using \eqref{est-6.4} we get that
\begin{align*} \mathcal E_\ve(\vp_\ve) \leq C(\delta_\ve)^2 \int_0^\infty \frac{1}{1+x^{1+\frac{4}{p_\ve}}}\,dx \leq C (\delta_\ve)^2 \, p_\ve\leq\frac{C}{p_\ve}.\end{align*}
On the other hand,
\begin{align*}
 \int_0^\infty \frac{(\vp_\ve)^{p_\ve}}{x^{\beta_\ve p_\ve}}\,dx  \approx \int_0^\infty \frac{1}{x^{\beta_\ve p_\ve}} \frac{1}{(1+x)^{3-\beta_\ve p_\ve}}\,dx,
 \end{align*}
 and this integral is bounded above and below by a positive constant independent of $\ve$.
  Combining the above estimates yields \eqref{S-bounded}. The second part of the proposition follows from
   $$\frac{C}{p_\ve}\geq S_\ve=\frac{\displaystyle\int_\R u_\ve\LL_\ve u_\ve\, dx}{\displaystyle\left( \int_\R  \frac{(u_\ve)^{p_\ve}}{|x|^{\beta_\ve p_\ve}} \, dx\right)^\frac{2}{p_\ve}}=\frac{1}{p_\ve}\left( \int_\R  \frac{(u_\ve)^{p_\ve}}{|x|^{\beta_\ve p_\ve}}\, dx\right)^{1-\frac{2}{p_\ve}},$$
  and this completes the proof.\\
\end{proof}

Now we give estimates for the function $\eta_\ve$ defined in \eqref{eta-epsilon}.

\begin{lemma}  We have  that
\begin{align}\label{bound1}  \LL_\ve \eta_\ve=\frac{\left(1+\frac{\eta_\ve}{p_\ve}\right)^{p_\ve-1}}{|x|^{\beta_\ve p_\ve}} =\frac{O(1)}{|x|^{\beta_\ve p_\ve}},
\end{align}
and
\begin{align}\label{bound2} \eta_\ve\leq C(r)\,\text{ in } B_r,\qquad \max_{\overline{B_1}}\eta_\ve=0,
\end{align} for every $r>0$.
\end{lemma}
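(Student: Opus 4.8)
The plan is to first record the equation, which is a one-line consequence of linearity, and then to extract the pointwise bounds from the radial monotonicity of $\tilde u_\ve$ together with the normalization $\max_{\overline{B_1}}u_\ve=1$. Since $\LL_\ve$ is linear and annihilates constants (the integrand in \eqref{formula-L} vanishes identically when $u$ is constant), equation \eqref{eq-uepsilon} gives
\[
\LL_\ve\eta_\ve=p_\ve\,\LL_\ve u_\ve=p_\ve\cdot\frac1{p_\ve}\,\frac{u_\ve^{p_\ve-1}}{|x|^{\beta_\ve p_\ve}}=\frac{u_\ve^{p_\ve-1}}{|x|^{\beta_\ve p_\ve}},
\]
and writing $u_\ve=1+\eta_\ve/p_\ve$ this is exactly the first identity in \eqref{bound1}. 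Moreover $\max_{\overline{B_1}}\eta_\ve=p_\ve\big(\max_{\overline{B_1}}u_\ve-1\big)=0$ is immediate from \eqref{eq-uepsilon}, and (since $u_\ve\ge0$) the same normalization yields $u_\ve\le1$ on $\overline{B_1}$. So it remains to control $u_\ve$ outside $\overline{B_1}$ and to check that the numerator $u_\ve^{p_\ve-1}$ is $O(1)$ locally.

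The key point is that $\tilde u_\ve(x)=|x|^{-\alpha_\ve}u_\ve(x)$ is even and radially non-increasing (Theorem \ref{theorem:properties}); since $\tilde u_\ve(\pm1)=u_\ve(\pm1)\le\max_{\overline{B_1}}u_\ve=1$, monotonicity gives $\tilde u_\ve(x)\le1$, hence $u_\ve(x)\le|x|^{\alpha_\ve}$ for all $|x|\ge1$. Now fix $r>0$. On $\overline{B_1}$ we have $\eta_\ve\le0$, while for $1\le|x|\le r$
\[
\eta_\ve(x)=p_\ve\big(u_\ve(x)-1\big)\le p_\ve\big(r^{\alpha_\ve}-1\big)=(p_\ve\alpha_\ve)\,\frac{r^{\alpha_\ve}-1}{\alpha_\ve}\le C(r),
\]
because $p_\ve\alpha_\ve<p_\ve\beta_\ve\to b<1$ by \eqref{relations} so $p_\ve\alpha_\ve$ stays bounded, while $\tfrac{r^{\alpha_\ve}-1}{\alpha_\ve}\to\log r$ as $\alpha_\ve\to0$; this proves \eqref{bound2}. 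In the same way, on $B_r$ (with $r\ge1$) one gets $u_\ve^{p_\ve-1}\le\max\{1,r^{\alpha_\ve(p_\ve-1)}\}\le r^{\alpha_\ve p_\ve}$, which is bounded uniformly in $\ve$ since $\alpha_\ve p_\ve$ is bounded; hence $\big(1+\eta_\ve/p_\ve\big)^{p_\ve-1}=u_\ve^{p_\ve-1}$ is bounded on compact sets uniformly in $\ve$, i.e.\ it is $O(1)$, which finishes \eqref{bound1}.

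There is essentially no real obstacle here: the only non-elementary inputs are the radial monotonicity of $\tilde u_\ve$ and the behaviour $\tilde u_\ve\sim|x|^{-\alpha_\ve}$ near the origin, both of which are quoted from Theorem \ref{theorem:properties} and the cited adaptation of \cite{Dipierro-Montoro-Peral-Sciunzi}. The one point worth emphasising is that the estimate is as sharp as it looks — it amounts to $u_\ve\le1+C(r)/p_\ve$ on $B_r$ — and the reason this is affordable is precisely the slow growth $u_\ve(x)\le|x|^{\alpha_\ve}$ combined with $\alpha_\ve p_\ve<\beta_\ve p_\ve\to b<1$.
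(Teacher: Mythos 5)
Your proof is correct and follows essentially the same argument as the paper: both use the normalization $\max_{\overline{B_1}}u_\ve=1$ together with the radial monotonicity of $\tilde u_\ve(x)=|x|^{-\alpha_\ve}u_\ve(x)$ to derive $u_\ve(x)\le|x|^{\alpha_\ve}$ for $|x|\ge1$, and then exploit the boundedness of $\alpha_\ve p_\ve$ (via $\alpha_\ve<\beta_\ve$ and $\beta_\ve p_\ve\to b<1$) to control both $\eta_\ve$ and $u_\ve^{p_\ve-1}$ on compact sets. Your write-up is only slightly more explicit about the elementary inequality $p_\ve(r^{\alpha_\ve}-1)=(p_\ve\alpha_\ve)\tfrac{r^{\alpha_\ve}-1}{\alpha_\ve}\le C(r)$ and about why the constants are uniform, but the underlying ideas are identical.
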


\begin{proof}
 Recall that  $u_\ve\leq1$ on $B_1$
 and,  since $x\mapsto |x|^{-\alpha_\ve}u_\ve(x)$ is monotone decreasing in $|x|$, we also have
\begin{align}\label{outside}  u_\ve(x)\leq u_\ve(1) |x|^{\alpha_\ve}\leq |x|^{\alpha_\ve}, \quad \text{for }|x|\geq 1.
\end{align}
As a consequence one obtains \eqref{bound1} since, for $1\leq |x|\leq r$, we have
\begin{equation}\label{bound4}(u_\ve)^{p_\ve-1}\leq |x|^{\alpha_\ve (p_\ve-1)}\leq r^{\beta_\ve p_\ve -\alpha_\ve}\leq r^{b+1},
\end{equation}
  as $\beta_\ve p_\ve\to b $. As for the bound \eqref{bound2}, just  note that $\eta_\ve\leq0$ on $B_1$, while   for $|x|\geq1$,
\begin{equation}\label{bound3}\eta_\ve(x)\leq p_\ve(|x|^{\alpha_\ve}-1) \leq\frac{C}{\alpha_\ve}(|x|^{\alpha_\ve}-1).
\end{equation}
\end{proof}

\begin{lemma} \label{lem-outside}  For any $q>1$ we have
\begin{equation}\label{eta-bound1}\int_\R\frac{\eta_\ve^+(x)}{1+|x|^q}\,dx\leq C(q).
\end{equation}
\end{lemma}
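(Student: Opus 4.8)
The plan is to use that $\eta_\ve^+$ is supported away from the unit ball, together with the pointwise bound on $u_\ve$ outside $B_1$. By \eqref{bound2} we have $\eta_\ve\le 0$ on $\overline{B_1}$, so $\eta_\ve^+\equiv 0$ there and it suffices to bound $\int_{\R\setminus B_1}\frac{\eta_\ve^+(x)}{1+|x|^q}\,dx$. On $\{|x|\ge 1\}$ the monotonicity of $|x|^{-\alpha_\ve}u_\ve$ gives \eqref{outside}, i.e. $u_\ve(x)\le|x|^{\alpha_\ve}$, hence $\eta_\ve^+(x)\le p_\ve\big(|x|^{\alpha_\ve}-1\big)$; equivalently one may simply invoke \eqref{bound3}.

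The essential quantitative input is that $\alpha_\ve p_\ve$ stays bounded as $\ve\downarrow 0$: since $\alpha_\ve<\beta_\ve$ by \eqref{choice-parameters}, we get $\alpha_\ve p_\ve<\beta_\ve p_\ve\to b<1$, so $\alpha_\ve p_\ve\le 1$ for $\ve$ small. Writing $|x|^{\alpha_\ve}-1=e^{\alpha_\ve\log|x|}-1$ and using the elementary inequality $e^t-1\le t\,e^t$ for $t\ge 0$, we obtain for $|x|\ge 1$
\begin{equation*}
p_\ve\big(|x|^{\alpha_\ve}-1\big)\le (\alpha_\ve p_\ve)\,|x|^{\alpha_\ve}\log|x|\le |x|^{\alpha_\ve}\log|x|.
\end{equation*}

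It then remains to integrate. Since $\alpha_\ve\to 0$, for $\ve$ small enough (depending only on $q$) we have $\alpha_\ve\le\frac{q-1}{2}$, so that on $\{|x|\ge 1\}$
\begin{equation*}
\frac{\eta_\ve^+(x)}{1+|x|^q}\le\frac{|x|^{\alpha_\ve}\log|x|}{|x|^q}\le\frac{\log|x|}{|x|^{(q+1)/2}},
\end{equation*}
and the right-hand side is integrable over $\R\setminus B_1$, with integral a finite constant depending only on $q$; combined with $\eta_\ve^+\equiv 0$ on $B_1$ this yields \eqref{eta-bound1}. The argument is short; the only point that needs care — and the reason one cannot simply discard powers of $p_\ve$ — is keeping the constant uniform as $\gamma_\ve\to\frac12$, $\alpha_\ve\to 0$ and $p_\ve\to\infty$, which is exactly what the bound $\alpha_\ve p_\ve\le 1$ provides.
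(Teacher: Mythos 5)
Your proof is correct and follows essentially the same route as the paper: both reduce to the region $|x|\ge 1$ via \eqref{bound2}, both invoke \eqref{bound3} and the key uniform bound on $\alpha_\ve p_\ve$ (which in the paper appears as $p_\ve\le C/\alpha_\ve$), and both then integrate against $|x|^{-q}$ after fixing a threshold exponent $(q-1)/2$. The only cosmetic difference is how you control $p_\ve(|x|^{\alpha_\ve}-1)$: you use $e^t-1\le t e^t$, while the paper uses the monotonicity in $\alpha$ of $\tfrac{1}{\alpha}(|x|^{\alpha}-1)$ for $|x|\ge 1$; these are interchangeable elementary estimates yielding the same integrable majorant.
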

\begin{proof} We first recall the bound \eqref{bound3}  and our previous discussion.
 Notice that  $$\frac{1}{\alpha_\ve}(|x|^{\alpha_\ve}-1)\leq \frac{1}{\alpha_0}(|x|^{\alpha_0}-1)\quad\text{for }|x|\geq1,\, 0<\alpha_\ve\leq\alpha_0.$$
 Therefore, choosing  $\alpha_0 =\frac12(q-1)$   we get that, for $\alpha_\ve<\frac12(q-1)$,
 $$\int_{\R} \frac{\eta_\ve^+(x)}{1+|x|^{q}}\,dx\leq C(q)\int_{\R} \frac{ 1}{1+|x|^{\frac{1+q}{2}}}\,dx\leq C(q), $$
 as desired.
 \end{proof}

In order to improve the previous bound to obtain the necessary result to apply Theorem \ref{regularity} we need
the following technical lemma, which gives estimates for suitable  test functions, when $\gamma_\ve\geq\frac14$.
We will obtain the desired  (uniform in $\varepsilon$) bound for $\|\eta_\varepsilon \|_{L^1_q(\mathbb R)}$  in Lemma  \ref{lem-L1/2}.

\begin{lemma}\label{lem-est-1}Let $\vp\in C_c^\infty(B_2)$ be such that $\vp\geq0$ and $\vp\equiv1$ on $B_1$. Then $\LL_\ve\vp(x)<0$ for $|x|\geq2$ and, for some $C_1>0$,
 $$ \frac{1}{C_1} \frac{\chi_{B_3^c}(x)}{|x|^{1+2\gamma_\ve+\alpha_\ve}}\leq|\LL_\ve \vp(x)|\leq C_1\left(\frac{\chi_{B_1}(x)}{|x|^{\alpha_\ve}}+\frac{\chi_{B_1^c}(x)}{|x|^{1+2\gamma_\ve+\alpha_\ve}}\right).$$
\end{lemma}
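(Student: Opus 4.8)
The plan is to compute $\LL_\ve\vp(x)$ directly from the definition \eqref{formula-L} in the three regimes $|x|\leq 1$, $1\leq |x|\leq 3$, and $|x|\geq 3$, exploiting that $\vp\equiv 1$ on $B_1$, $\vp\equiv 0$ on $B_2^c$, and $0\leq\vp\leq 1$ with $\vp$ Lipschitz. Recall that
\[
\LL_\ve\vp(x)=\int_\R\frac{\vp(x)-\vp(y)}{|x-y|^{1+2\gamma_\ve}|x|^{\alpha_\ve}|y|^{\alpha_\ve}}\,dy,
\]
so everything reduces to bounding the one-dimensional singular integral $\int_\R (\vp(x)-\vp(y))|x-y|^{-1-2\gamma_\ve}|y|^{-\alpha_\ve}\,dy$ and then multiplying by $|x|^{-\alpha_\ve}$. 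First I would treat $|x|\geq 3$: here $\vp(x)=0$, so $\LL_\ve\vp(x)=-|x|^{-\alpha_\ve}\int_{B_2}\vp(y)|x-y|^{-1-2\gamma_\ve}|y|^{-\alpha_\ve}\,dy$, which is manifestly negative, and since $|x-y|\approx|x|$ for $y\in B_2$, $|x|\geq 3$, one gets $|\LL_\ve\vp(x)|\approx |x|^{-1-2\gamma_\ve-\alpha_\ve}\int_{B_2}\vp(y)|y|^{-\alpha_\ve}\,dy$; the remaining integral $\int_{B_2}\vp|y|^{-\alpha_\ve}\,dy$ is bounded above and below by universal constants (uniformly in $\alpha_\ve\in[0,\tfrac12)$, using $\vp\equiv 1$ on $B_1$ for the lower bound), giving the two-sided bound on $B_3^c$. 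This also handles the claim that $\LL_\ve\vp<0$ for $|x|\geq 2$, since on $2\leq|x|<3$ one again has $\vp(x)=0$ and the same sign computation applies.

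Next I would handle $|x|\leq 1$: here $\vp(x)=1\geq\vp(y)$ for all $y$, so $\LL_\ve\vp(x)=|x|^{-\alpha_\ve}\int_\R(1-\vp(y))|x-y|^{-1-2\gamma_\ve}|y|^{-\alpha_\ve}\,dy$. The integrand is supported in $B_1^c$ (where $1-\vp\leq 1$), and on $B_1^c$ with $|x|\leq 1$ we have $|x-y|\gtrsim |y|$, so the $y$-integral is controlled by $\int_{B_1^c}|y|^{-1-2\gamma_\ve-\alpha_\ve}\,dy\leq C$ uniformly (since $1+2\gamma_\ve+\alpha_\ve>1$); hence $|\LL_\ve\vp(x)|\leq C|x|^{-\alpha_\ve}$, which is the stated upper bound on $B_1$. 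Finally, for $1\leq |x|\leq 3$ (the transitional region), $|\vp(x)-\vp(y)|\leq \min\{1,\|\nabla\vp\|_\infty|x-y|\}$ and one splits the $y$-integral into $|x-y|\leq 1$ (where the Lipschitz bound gives $\int_{|x-y|\leq 1}|x-y|^{1-2\gamma_\ve}\,dy\leq C$, using $\gamma_\ve<\tfrac12$) and $|x-y|\geq 1$ (where $|\vp(x)-\vp(y)|\leq 1$ and $\int_{|x-y|\geq 1}|x-y|^{-1-2\gamma_\ve}\,dy\leq C$); since $|x|\approx 1$ on this region, $|x|^{-\alpha_\ve}\approx 1$ and the bound $|\LL_\ve\vp(x)|\leq C\leq C|x|^{-1-2\gamma_\ve-\alpha_\ve}\cdot\text{const}$ is absorbed into the claimed upper bound (the constant on $B_1^c$ can be taken to cover $1\leq|x|\leq 3$ by enlarging $C_1$).

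The main point to watch — and the only place real care is needed — is the \emph{uniformity of all constants as $\gamma_\ve\uparrow\tfrac12$ and $\alpha_\ve\downarrow 0$}. The potentially dangerous integrals are $\int_{|x-y|\leq 1}|x-y|^{1-2\gamma_\ve}\,dy$ and $\int_{|y|\geq 1}|y|^{-1-2\gamma_\ve}\,dy$; the first stays bounded because $1-2\gamma_\ve>-1$ is kept away from $-1$ for $\gamma_\ve<\tfrac12$ (it is in fact bounded by $\tfrac{2}{2-2\gamma_\ve}\leq 2$ when $\gamma_\ve\geq 0$), and the second by $\tfrac{1}{2\gamma_\ve+\alpha_\ve}\leq \tfrac{1}{2\gamma_\ve}\leq 2$ when $\gamma_\ve\geq\tfrac14$ — which is precisely why the hypothesis $\gamma_\ve\geq\tfrac14$ appears. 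For the lower bound on $B_3^c$ one additionally needs $\int_{B_2}\vp(y)|y|^{-\alpha_\ve}\,dy\geq \int_{B_1}|y|^{-\alpha_\ve}\,dy\geq |B_1|=2$, which is trivially uniform. Assembling these pieces with a single $C_1$ large enough to dominate every constant that appeared completes the proof.
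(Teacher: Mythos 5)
Your decomposition of $\R$ into an inner region, an outer region, and a transitional annulus mirrors the paper's strategy, and the outer part is handled correctly: for $|x|\geq 2$ one has $\vp(x)=0$, the sign is immediate, and for $|x|\geq 3$ the two--sided bound follows from $|x-y|\approx|x|$ on the support of $\vp$. The sign claim and the lower bound are fine.

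The transitional region is where the argument breaks down, and this is precisely the crux of the lemma. You invoke the Lipschitz bound $|\vp(x)-\vp(y)|\leq C|x-y|$ and claim it controls the near-diagonal part by $\int_{|x-y|\leq 1}|x-y|^{1-2\gamma_\ve}\,dy$. But dividing a Lipschitz bound by $|x-y|^{1+2\gamma_\ve}$ gives $|x-y|^{-2\gamma_\ve}$, not $|x-y|^{1-2\gamma_\ve}$, and
\[
\int_{|z|\leq 1}|z|^{-2\gamma_\ve}\,dz=\frac{2}{1-2\gamma_\ve}\xrightarrow{\gamma_\ve\uparrow 1/2}\infty,
\]
so the resulting constant blows up exactly in the limit you are trying to take. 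A first--order (Lipschitz) bound cannot yield the required uniformity: one must use the \emph{cancellation} of the linear term in the Taylor expansion of $\vp$ against the odd kernel. Because of the weight $|y|^{-\alpha_\ve}$ this cancellation is not automatic; the paper recovers it by symmetrizing, writing
\[
2|x|^{\alpha_\ve}\LL_\ve\vp(x)=\int_\R\frac{1}{|y|^{1+2\gamma_\ve}}\left(\frac{\vp(x)-\vp(x-y)}{|x-y|^{\alpha_\ve}}+\frac{\vp(x)-\vp(x+y)}{|x+y|^{\alpha_\ve}}\right)dy,
\]
and observing that the symmetrized bracket is $O(|y|^2)$ for $(x,y)\in(B_3\setminus B_{1/2})\times B_{1/4}$ (this is the second--order Taylor expansion; the linear contributions of $\vp$ and of the weight both cancel). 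Only then does one land on $\int_{|y|\leq 1/4}|y|^{1-2\gamma_\ve}\,dy$, which is uniformly bounded. Without this step the claimed constant $C_1$ is not independent of $\ve$, and the estimate is useless for the subsequent compactness argument.

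A smaller issue: in the inner region you use ``$|x-y|\gtrsim|y|$ for $|x|\leq 1$, $|y|\geq 1$'', which is false when $|x|$ and $|y|$ are both close to $1$. This is why the paper takes $B_{1/2}$, not $B_1$, as the easy inner region, and absorbs the annulus $\tfrac12\leq|x|\leq 1$ into the symmetrization argument. (On $B_{1/2}$ your inequality does hold, since $|x-y|\geq|y|-\tfrac12\geq|y|/2$.) If you insist on splitting at $|x|=1$, the correct replacement for your inequality is the observation that $1-\vp$ vanishes to high order at $\partial B_1$ because $\vp\equiv 1$ on $B_1$ forces all derivatives to vanish there; but this is a different argument from the one you wrote.
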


\begin{proof}
For $ x\in (B_\frac12\setminus\{0\})\cup B_3^c$ we easily get from the definition that
$$|\LL_\ve \vp(x)|\leq C\left(\frac{\chi_{B_\frac12}(x)}{|x|^{\alpha_\ve}}+\frac{\chi_{B_3^c}(x)}{|x|^{1+2\gamma_\ve+\alpha_\ve}}\right),\quad\text{for }  \gamma_\ve\geq\frac14.$$
    To complete the proof we note that
\begin{align*}
|x|^{\alpha_\ve}\LL_\ve\vp(x)&=
\int_\R\frac{\vp(x)-\vp(y)}{|x-y|^{1+2\gamma_\ve} |y|^{\alpha_\ve}}\,dy=\int_\R\frac{\vp(x)-\vp(x-y)}{|y|^{1+2\gamma_\ve} |x-y|^{\alpha_\ve}}\,dy
=\int_\R\frac{\vp(x)-\vp(x+y)}{|y|^{1+2\gamma_\ve} |x+y|^{\alpha_\ve}}\,dy,\end{align*} which leads to $$2|x|^{\alpha_\ve}\LL_\ve \vp(x)=\int_\R\frac{1}{|y|^{1+2\gamma_\ve}} \left(\frac{\vp(x)-\vp(x-y)}{ |x-y|^{\alpha_\ve}}+\frac{\vp(x)-\vp(x+y)}{ |x+y|^{\alpha_\ve}}\right)\,dy.$$ As $\vp$ is smooth, we get that $$\frac{\vp(x)-\vp(x-y)}{ |x-y|^{\alpha_\ve}}+\frac{\vp(x)-\vp(x+y)}{ |x+y|^{\alpha_\ve}}=O(|y|^2)\quad\text{on }(x,y)\in (B_3\setminus B_\frac12)\times B_\frac14.$$
The result follows immediately. 
\end{proof}

Now we  can improve the bound \eqref{eta-bound1}.

\begin{lemma}\label{lem-L1/2}
We have   $$ \int_{\R}\frac{|\eta_\ve|}{1+|x|^{1+2\gamma_\ve+\alpha_\ve}}\,dx\leq C,$$
for some constant $C$ independent of $\ve$.
\end{lemma}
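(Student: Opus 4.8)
The plan is to combine the one-sided bound already established in Lemma \ref{lem-outside} (which controls $\eta_\ve^+$) with a bound on the negative part $\eta_\ve^-$, and the latter I would obtain by testing the equation for $\eta_\ve$ against the cutoff function $\vp$ from Lemma \ref{lem-est-1}. Recall that $\eta_\ve$ solves $\LL_\ve\eta_\ve = |x|^{-\beta_\ve p_\ve}(1+\eta_\ve/p_\ve)^{p_\ve-1}$, and that the right-hand side is $O(1)|x|^{-\beta_\ve p_\ve}$ on $B_r$ for every fixed $r$ by \eqref{bound1}, while globally it is nonnegative; in fact it is integrable against $dx$ on all of $\R$ since $(1+\eta_\ve/p_\ve)^{p_\ve-1} = (u_\ve)^{p_\ve-1}$ and, using $(u_\ve)^{p_\ve-1}\le (u_\ve)^{p_\ve}\cdot u_\ve^{-1}$ together with the decay $u_\ve(x)\le|x|^{\alpha_\ve}$ from \eqref{outside} and the uniform bound $\int_\R (u_\ve)^{p_\ve}|x|^{-\beta_\ve p_\ve}\,dx\le C$ from Proposition \ref{propo-1}, one gets $\int_\R |x|^{-\beta_\ve p_\ve}(u_\ve)^{p_\ve-1}\,dx\le C$ uniformly in $\ve$ (this is the estimate one should state carefully).

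Next I would use integration by parts \eqref{integration-by-parts}: testing $\LL_\ve\eta_\ve$ against $\vp$ gives
\[
\int_\R \vp(x)\,\LL_\ve\eta_\ve(x)\,dx \;=\; \int_\R \eta_\ve(x)\,\LL_\ve\vp(x)\,dx.
\]
The left-hand side equals $\int_\R \vp(x)\,|x|^{-\beta_\ve p_\ve}(u_\ve)^{p_\ve-1}\,dx$, which is bounded above uniformly by the previous paragraph (and is nonnegative). For the right-hand side I split $\eta_\ve = \eta_\ve^+ - \eta_\ve^-$ and use the two-sided control on $\LL_\ve\vp$ from Lemma \ref{lem-est-1}: since $\LL_\ve\vp(x)<0$ for $|x|\ge 2$, the term $\int_{|x|\ge 2}\eta_\ve^-(x)\,|\LL_\ve\vp(x)|\,dx$ appears with a favorable sign, and the lower bound $|\LL_\ve\vp(x)|\ge C_1^{-1}|x|^{-1-2\gamma_\ve-\alpha_\ve}$ on $B_3^c$ turns it into a multiple of $\int_{B_3^c}\eta_\ve^-(x)(1+|x|^{1+2\gamma_\ve+\alpha_\ve})^{-1}\,dx$. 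Rearranging, this quantity is bounded by $\int_\R \vp\,|x|^{-\beta_\ve p_\ve}(u_\ve)^{p_\ve-1}\,dx$ plus $\int_\R \eta_\ve^+(x)\,|\LL_\ve\vp(x)|\,dx$ plus the contribution of $\eta_\ve$ on $B_3$; the first is uniformly bounded, the second is controlled by Lemma \ref{lem-outside} together with the upper bound on $|\LL_\ve\vp|$ in Lemma \ref{lem-est-1}, and the third is handled by the local bounds \eqref{bound2} on $\eta_\ve$ in $B_3$ (which bound $\eta_\ve^+$ on $B_3$) together with a separate bound on $\eta_\ve^-$ in $B_3$.

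The one genuinely delicate point — and the step I expect to be the main obstacle — is controlling $\eta_\ve^-$ on the \emph{bounded} region $B_3$, i.e.\ ruling out that $u_\ve$ degenerates to $0$ too fast near some interior point (note that $u_\ve$ is normalized only by $\max_{\overline{B_1}}u_\ve=1$, with no a priori positive lower bound). One way around this: the monotonicity of $\tilde u_\ve=|x|^{-\alpha_\ve}u_\ve$ forces $u_\ve$ to be comparable to $1$ near the origin and decreasing away from it, so on $B_3$ one has $u_\ve(x)\ge u_\ve(3)\cdot 3^{-\alpha_\ve}\ge c\,u_\ve(3)$; a lower bound for $u_\ve(3)$ — equivalently a bound $\eta_\ve(3)\ge -Cp_\ve$, i.e.\ $\int_{B_3}\eta_\ve^-\le Cp_\ve$ — follows crudely from $\eta_\ve\ge -p_\ve$ pointwise (since $u_\ve\ge 0$). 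That crude bound $\int_{B_3}\eta_\ve^-\,dx \le C p_\ve$ is too weak on its own, so one must instead feed it back: insert it into the right-hand side $\int_\R\eta_\ve\,\LL_\ve\vp\,dx$ only through the $B_3$-region where $|\LL_\ve\vp|\le C_1|x|^{-\alpha_\ve}$ is bounded, giving a contribution $\le C_1\int_{B_3}|\eta_\ve|\,dx$; but this reintroduces the quantity we are trying to bound only on a compact set, so actually the clean route is to observe that on $B_3$ one already has $|\eta_\ve|\le C$ from below as well, because $\LL_\ve\eta_\ve\ge 0$ implies (by the maximum principle / Harnack-type estimate for $\LL_\ve$, or directly from the integral representation and the uniform tail bound on $\eta_\ve^+$) a uniform lower bound for $\eta_\ve$ on $B_2$. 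I would establish this interior lower bound first as a preliminary claim — using that $u_\ve>0$ solves $\LL_\ve u_\ve\ge 0$ together with $\max_{\overline{B_1}}u_\ve=1$ and the uniform $L^1$-type bounds above — and then the rest of the argument is the bookkeeping outlined above.
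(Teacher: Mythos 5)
Your opening reduction is the right one and matches the paper's: test the equation for $\eta_\ve$ against $\vp$, use integration by parts together with Lemma \ref{lem-est-1}, control the right-hand side by \eqref{bound4} on $\operatorname{supp}\vp$ (you do not need the global bound $\int_\R |x|^{-\beta_\ve p_\ve}(u_\ve)^{p_\ve-1}\,dx\le C$ — and in fact that bound does not follow from the ingredients you cite, since $u_\ve^{p_\ve-1}\le u_\ve^{p_\ve}u_\ve^{-1}$ is useless when $u_\ve$ is small, and $u_\ve\le|x|^{\alpha_\ve}$ for $|x|\ge1$ only gives the exponent $(\alpha_\ve-\beta_\ve)p_\ve-\alpha_\ve$ which in general is not below $-1$; fortunately $\vp$ has compact support, so only \eqref{bound4} is needed). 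In this way you arrive, exactly as in the paper's \eqref{est-Ls}, at the reduction
\[
\int_{B_3^c}\frac{|\eta_\ve|}{1+|x|^{1+2\gamma_\ve+\alpha_\ve}}\,dx\;\le\; C+C\int_{B_3}\frac{|\eta_\ve|}{|x|^{\alpha_\ve}}\,dx,
\]
and the remaining issue is to bound $\int_{B_3}|\eta_\ve|\,|x|^{-\alpha_\ve}\,dx$. You correctly flag this as the delicate point.

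But the device you propose to close the gap — a uniform-in-$\ve$ interior lower bound $\eta_\ve\ge -C$ on $B_2$ from a Harnack inequality or maximum principle for $\LL_\ve$ — is precisely the step that does not go through. A weak Harnack estimate for a supersolution ($\LL_\ve\eta_\ve\ge 0$) of a nonlocal operator gives $\inf_{B_1}\eta_\ve$ bounded below in terms of local positivity \emph{minus the tail of $\eta_\ve^-$}; it is the negative tail, not $\eta_\ve^+$, that enters here, and a uniform bound on the negative tail is exactly the content of the lemma — so the argument is circular. (The tail bound on $\eta_\ve^+$ from Lemma \ref{lem-outside} points the wrong way for a supersolution.) Moreover, no Harnack inequality for the weighted operator $\LL_{\gamma,\alpha}$, uniform in $\gamma\uparrow\frac12$ and $\alpha\downarrow 0$, is established in the paper; Theorem \ref{regularity} gives only H\"older continuity, which is strictly weaker. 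The paper resolves the issue by a genuinely different route: it argues by contradiction, normalizes $\tilde\eta_\ve:=\eta_\ve/\|\eta_\ve\|_{L^2(B_3)}$, checks that $\tilde\eta_\ve$ satisfies the hypotheses of Theorem \ref{regularity} (here the reduction inequality above is essential to get the uniform $L^1_q$-bound for $\tilde\eta_\ve$), extracts a $C^0_{loc}$ limit $\tilde\eta$, shows that $\tilde\eta$ solves $(-\D)^{1/2}\tilde\eta+c_0=0$ on $\R$ and that $c_0=0$, and then invokes the Liouville theorem for the half-Laplacian to conclude $\tilde\eta$ is constant — contradicting $\max_{\bar B_1}\tilde\eta=0$ and $\|\tilde\eta\|_{L^2(B_3)}=1$. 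So the "uniform lower bound on $B_3$" that you want is a \emph{consequence} of the lemma (via the post-lemma $C^0_{loc}$ convergence), not an available ingredient for proving it.
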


\begin{proof} Let $\vp$ be a test function satisfying $\vp\in C_c^\infty(B_2)$, $\vp\geq0$ and $\vp\equiv1$ on $B_1$.  It holds
$$\int_\R \eta_\ve \LL_\ve \vp \,dx=\int_{\R}\vp\LL_\ve \eta_\ve \, dx=\int_\R \frac{\left(1+\frac{\eta_\ve}{p_\ve}\right)^{p_\ve-1}}{|x|^{\beta_\ve p_\ve}} \vp \,dx\leq C\int _\R \frac{\vp}{|x|^{\beta_\ve p_\ve}}\,dx\leq C,$$ where the first inequality follows using the bound \eqref{bound4}.
Hence,  by Lemmas \ref{lem-outside} and  \ref{lem-est-1} we obtain \begin{align}\label{est-Ls}\int_{B_3^c}\frac{|\eta_\ve|}{1+|x|^{1+2\gamma_\ve+\alpha_\ve}}\,dx\leq C+C\int_{B_3}\frac{|\eta_\ve|}{|x|^{\alpha_\ve}}\,dx.\end{align}

We now assume by contradiction that the lemma is false. Then necessarily
$$\int_{B_3}\frac{|\eta_\ve|}{|x|^{\alpha_\ve}}\,dx\to\infty\quad\text{as}\quad \ve\to 0.$$
Since
\begin{equation}\label{Holder}\int_{B_3}\frac{|\eta_\ve|}{|x|^{\alpha_\ve}}\,dx\leq C\|\eta_\ve\|_{L^2(B_3)},
\end{equation}
thanks to H\"older's inequality,
we also have, in particular, that
$$\mu_\ve:=\|\eta_\ve\|_{L^2(B_3)}\to\infty,$$
as $\ve\downarrow 0$.
Then, setting
$$\tilde\eta_\ve:=\frac{\eta_\ve}{\mu_\ve},$$
 one obtains that for every $r>0$ (recall \eqref{bound1} and \eqref{bound4})
$$\LL_\ve\tilde\eta_\ve=\frac{o_\ve(1)}{|x|^{\alpha_\ve}}\quad\text{in }B_r$$
and
$$\int_{\R}\frac{|\tilde \eta_\ve|}{1+|x|^{q_\ve}}\,dx\leq C,\quad q_\ve:=1+2\gamma_\ve+\alpha_\ve, $$
where this last inequality follows from \eqref{est-Ls} and \eqref{Holder}. Moreover, using the monotonicity of $u_\ve$ one gets that for any $M\geq 1$ \begin{align} \|\tilde\eta_\ve
\|_{L^\infty(B_M)}\leq C(M)\left(1+\int_{\R}\frac{|\tilde \eta_\ve|}{1+|x|^{q_\ve}}\,dx \right)\leq C(M).\end{align}
 Also note that $$\max_{\bar B_1}\tilde\eta_\ve=0,\quad \tilde\eta_\ve ^+=o_\ve(1)\quad\text{in }B_R,$$ for every $R>0$.
 Therefore, by \eqref{claim-1}, up to a subsequence,  $\tilde\eta_\ve\to\tilde\eta$ in $C^0_{loc}(\R)$, where $$\tilde\eta\leq 0,\quad\max_{\bar B_1}\tilde\eta=0,\quad  \|\tilde\eta\|_{L^2(B_3)}=1,\quad \int_\R\frac{|\tilde\eta|}{1+x^2}\,dx<\infty. $$
In order to obtain a contradiction we will show that
\begin{equation}\label{eq-tilde}(-\D)^\frac12\tilde\eta=0 \quad\text{in } \R.
\end{equation}
Since the only solution of this equation is  $\tilde\eta\equiv const$ (see e.g. proof of Lemma 2.4 in \cite{Hyd}), this yields a contradiction to the fact that $\max_{\bar B_1}\tilde \eta =0$ and  $\|\tilde\eta\|_{L^2(B_3)}=1$. Thus, it remains to obtain \eqref{eq-tilde}, the equation  satisfied by $\tilde \eta$.

To see this note that, up to a further  subsequence, the following limit exists
 \begin{equation*}\label{pc_0}
\lim_{R\to\infty}\lim_{\ve\to0}\int_{B_R^c}\frac{|\tilde\eta_\ve|}{|x|^{q_\ve}}\,dx=:c_0\geq 0.
\end{equation*}
Note that we can rewrite $c_0$ as
\begin{equation}\label{c_00}
c_0=-\lim_{R\to\infty}\lim_{\ve\to0}\int_{B_R^c}\frac{\tilde\eta_\ve}{|x|^{q_\ve}}\,dx. \end{equation}
This follows as $q_\ve\to 2$, and therefore
$$\int_{\{|x|\geq R\}} \frac{\eta_\ve^+}{|x|^{q_\ve}}\,dx\leq \frac{1}{R^\frac14}\int_{B_R^c}\frac{\eta_\ve^+}{|x|^\frac 54}\, dx\leq \frac{C}{R^\frac14}\xrightarrow{R\to\infty}0,$$
thanks to Lemma \ref{lem-outside}. Let us first show that $\tilde\eta $ satisfies the equation
$${(\varsigma_{1/2})^{-1}}(-\D)^\frac12\tilde\eta+c_0=0\quad\text{in }\R,$$
in the sense that for every $\vp\in C_c^\infty(\R)$,
 \begin{align}\label{eq-bareta}
{(\varsigma_{1/2})^{-1}}\int_{\R}\tilde\eta(-\D)^\frac12\vp \,dx+c_0\int_{\R}\vp \,dx=0.\end{align}
To this end we fix $R>>1$ such that $\vp$ is supported in $B_R$. Then for $|x|\geq r>>R$ we see that $$\LL_\ve\vp(x)=\int_{\mathbb R} \frac{-\vp(\tilde x)}{|x-\tilde x|^{1+2\gamma_\ve}|x|^{\alpha_\ve}|\tilde x|^{\alpha_\ve}}\,d\tilde x=-\frac{1+o_\ve(1)+o_r(1)}{|x|^{q_\ve}}\int_{\R}\vp \,d\tilde x.$$
We write
\begin{align*} o_\ve(1)=\frac{1}{\mu_\ve}\int_{\R}\frac{\left(1+\frac{\eta_\ve}{p_\ve}\right)^{p_\ve-1}}{|x|^{b_\ve p_\ve}}\vp \,dx&=\int_\R \tilde\eta_\ve \LL_\ve\vp \,dx\\ &=\int_{B_r} \tilde\eta_\ve \LL_\ve\vp \,dx -(1+o_r(1)+o_\ve(1))\int_\R\vp \,d\tilde x\int_{B_r^c}   \frac{\tilde\eta_\ve}{|x|^{q_\ve}}\,dx.
\end{align*}
Taking   $\ve\to0$ first, and then taking $r\to\infty$ we obtain \eqref{eq-bareta}. Recall that we have used the expression for $c_0$  given in  \eqref{c_00}.\\

It remains to show that  $c_0=0$. In order to prove this we fix $\vp\in C_c^\infty(B_2)$ such that $\vp\geq0$ and $\vp\equiv 1$ in $B_1$. For $t>0$ small, setting $\vp_t(x):=\vp(t x)$ we see that $\int_\R\vp_t \,dx\approx\frac1t$, and
$$ (-\D)^\frac12\vp_t (x)=t [ (-\D)^\frac12\vp] (t x)=O(1)\frac{t}{1+t^2x^2}\leq C \frac{t}{1+t^2x^2} ,$$
where we have used that for a smooth compactly supported $\varphi$ it holds
 $$|(-\Delta)^\frac12\vp(x)|\leq \frac{C}{1+|x|^2}$$
Hence, using it as a test function in \eqref{eq-bareta} we obtain
$$c_0\int_{\R}\vp_t\, dx=- {(\varsigma_{1/2})^{-1}}\int_\R\tilde\eta(-\D)^\frac12\vp_t \,dx\leq C \int_{\R}|\tilde \eta| |(-\D)^\frac12\vp_t |\,dx.$$

Now note that for $t>0$ small $$\frac{t^2|\tilde\eta(x)| }{1+t^2 x^2}\leq \frac{|\tilde\eta(x)|}{1+x^2}, \quad x\in\R.$$
Then,
if $c_0>0$, then  multiplying both sides  by $t$ and using the dominated convergence theorem as $t\to 0$, 
 we get
$$c_0\leq C  \int_{\R}\frac{|\tilde\eta(x)|t^2}{1+t^2x^2}\,dx
\xrightarrow{t\to0}0,$$
which is a contradiction.
This concludes the proof of Lemma \ref{lem-L1/2}.\\
\end{proof}

 The bound from Lemma \ref{lem-L1/2} allow us to apply Theorem \ref{regularity} to solutions to  Equation \eqref{bound1}, obtaining the following H\"older estimate.

 \begin{align}\label{claim-1}\|\eta_\ve\|_{C^{0,\sigma}(B_R)}\leq C(\sigma,R) \end{align}
 for some $\sigma>0$ (which depends on $b:=\lim_{\ve\to0} \beta_\ve p_\ve$). Then, as a consequence of Proposition \ref{propo-1}, Lemma \ref{lem-L1/2} and \eqref{claim-1} we get that, up to a subsequence,{ $$\eta_\ve\to\eta_0\quad\text{in }C^0_{loc}(\R),\quad  \quad \int_\R\frac{|\eta_0|}{1+x^2}\,dx<\infty.$$
Now define
 $$\eta:=\eta_0+\log\varsigma_\frac12\in L^1_\frac 12(\R)$$
in order to simplify the multiplicative constant.}
To conclude the proof of Theorem  \ref{thm-limit-function} we are left to show that the limit function $\eta$ satisfies the desired equation. However, proceeding as in the proof of Lemma \ref{lem-L1/2}
before,  one can show that  this limit function $\eta$ satisfies
 $$(-\D)^\frac12\eta+c_1=  \frac{e^\eta}{|x|^b}\quad\text{in }\R  ,$$ for some $c_1\geq 0$. Since the $e^\eta |\cdot|^{-b}\in L^1(\R)$, following again  the proof of the above lemma we would get that $c_1=0$, and this shows \begin{align}\label{eq-app-1}   (-\D)^\frac12 \eta=|x|^{-b} e^\eta\quad\text{ in }\R,\quad  \kappa:=\int_\R |x|^{-b} e^\eta\, dx<\infty,\end{align} for some $b<1$.
Finally, note that all solutions in $L^1_{1/2}(\mathbb R)$ are of the form
are given by \eqref{solution-eta} (see \cite{Galvez-Jimenez-Mira,Zhang-Zhou}). Moreover, a direct calculation from the explicit expression shows that $\kappa=2\pi(1-b)$, and this finishes the proof of Theorem \ref{thm-limit-function}.
\qed

\section{Proof of Theorem \ref{main-theorem} }\label{section:proof}

We take $u_\ve$ the minimizer of \eqref{eq-S} and all the parameters as in Section \ref{section:compactness}. For $v\in C_c^\infty(\R)$ we set  $$w_\ve :=(1+\ve v)u_\ve ,\quad \lambda_\ve:=\frac12E_\ve[u_\ve]=\int_{\mathbb R} u_\ve \LL_\ve u_\ve \, dx=\frac{1}{p_\ve}\int_{\mathbb R} \frac{(u_\ve)^{p_\ve}}{|x|^{\beta_\ve p_\ve}}\,dx.  $$ 

Using the integration by parts and the product formulas  \eqref{integration-by-parts} and \eqref{product-formula}, respectively, we compute
\begin{equation}\label{long-integral}
\begin{split}
\int_{\mathbb R} w_\ve\mathcal L_\ve w_\ve\,dx&=\int_{\mathbb R} u_\ve\mathcal L_\ve u_\ve\,dx+\ve \int_{\mathbb R} u_\ve\mathcal L_\ve(u_\ve v)\,dx+\ve \int_{\mathbb R} u_\ve v\mathcal L_\ve u_\ve\,dx+\ve ^2\int_{\mathbb R} u_\ve v\mathcal L_\ve (u_\ve v)\,dx\\
&=\int_{\mathbb R} u_\ve \mathcal L_\ve u_\ve  \,dx+2\ve \int_{\mathbb R} u_\ve v\mathcal L_\ve u_\ve\,dx+\ve  ^2\int_{\mathbb R} u_\ve v\mathcal L_\ve (u_\ve v)\,dx\\
&=:\lambda_\ve +2\ve  I_1+\ve ^2 I_2.
\end{split}
\end{equation}
For the $I_1$ integral, we recall  that  $u_\ve$ satisfies equation \eqref{eq-uepsilon}, so that
\begin{equation*}
I_1=\frac{1}{p_\ve}\int_{\mathbb R}  v\frac{(u_\ve)^{p_\ve  }}{|x|^{\beta_\ve p_\ve }}\,dx.
\end{equation*}
To estimate $I_2$ we use the product formula \eqref{product-formula}, thus
\begin{align*}  I_2&=\int_{\mathbb R} v^2 u_\ve  \LL_\ve  u_\ve \,dx+\int_{\mathbb R} (u_\ve)^2 v\LL_\ve v \,dx-\int_{\mathbb R}\int_{\mathbb R} u_\ve(x)v(x)[u_\ve(x)-u_\ve(y)][v(x)-v(y)]G_\ve(x,y)\,dydx\\ &=: I_{2,1}+I_{2,2}-I_{2,3}. \end{align*}

We have shown in Theorem \ref{thm-limit-function}  that for some  { $\eta_0\in C^0(\R)$ we have    \begin{equation}\label{limit}p_\ve(u_\ve-1)\to \eta_0\quad\text{ in }C^0_{loc}(\R),\end{equation}} as $\ve\downarrow0$. In particular, $u_\ve\to1$ in $C^0_{loc}(\R)$. Therefore, recalling that $u_\ve\leq |x|^{\alpha_\ve}$ on $B_1^c$, again by equation \eqref{eq-uepsilon} we get
 $$I_{2,1}=\frac{1}{p_\ve}\int_{\mathbb R} v^2\frac{(u_\ve)^{p_\ve}}{|x|^{\beta_\ve p_\ve}} \,dx=o_\ve(1)\quad \text{and }I_{2,3}=o_\ve(1).$$  Consequently, $$ I_2\xrightarrow{\ve\to0} \int_{\mathbb R} v\,\LL_{\frac 12,0} v \,dx.$$
Moreover, from \eqref{limit} we also have
$$ (u_\ve)^{p_\ve}=\big(1+\tfrac{1}{p_\ve} p_\ve (u_\ve-1)\big)^{p_\ve}\to { e^{\eta_0}}$$
and
$$|1+\ve v|^{p_\ve}=\big|1+\tfrac{1}{p_\ve} (\ve p_\ve v)\big|^{p_\ve}\to e^{av},$$
as $\ve \downarrow 0$. Therefore, by Fatou's lemma
 \begin{align}\label{conv-1} \int e^{av} \frac{{e^{\eta_0}}}{|x|^b}\,dx \leq \liminf_{\ve\to0}\int \frac{|w_\ve|^{p_\ve}}{|x|^{\beta_\ve p_\ve}}\,dx.
 \end{align}
In addition, we have shown in Proposition \ref{propo-1}  that
 $$\int_\R \frac{(u_\ve)^{p_\ve}}{|x|^{\beta_\ve p_\ve}}\,dx\leq C,$$
 then, up to a subsequence, we also have
 \begin{align} \label{conv-2}\int_{\mathbb R} \frac{(u_\ve)^{p_\ve}}{|x|^{\beta_\ve p_\ve}}\,dx =\bar \kappa+o_\ve(1),
 \end{align} for some constant $\bar\kappa\geq0$.
 Using again Fatou's lemma we can conclude
{ $$\bar\kappa\geq \kappa_0=:\int_\R \frac{e^{\eta_0}}{|x|^b}\,dx.$$} Indeed
  $$\kappa_0=\int_\R \frac{e^{\eta_0}}{|x|^b} \,dx\leq\liminf_{\ve\to0}\int_\R \frac{\left(1+\frac{\eta_\ve}{p_\ve}\right)^{p_\ve}}{|x|^{\beta_\ve p_\ve}}\, dx=\liminf_{\ve\to0}\int_{\mathbb R} \frac{(u_\ve)^{p_\ve}}{|x|^{\beta_\ve p_\ve}}\,dx =\bar \kappa. $$
  At the end of the proof we shall show that, actually,  {$\bar\kappa=\kappa_0$} and equality holds in \eqref{conv-1}.\\

Now, from equation \eqref{eq-uepsilon} we see that
 $$\lambda_\ve=\int_{\mathbb R} u_\ve\LL_\ve u_\ve \,dx=\frac{1}{p_\ve}\int_{\mathbb R} \frac{(u_\ve)^{p_\ve}}{|x|^{\beta_\ve p_\ve}}\,dx
 =\frac{1}{p_\ve}(\bar\kappa+o_\ve(1)). $$
Note also that
$$\int_\R v \frac{(u_\ve)^{p_\ve}}{|x|^{\beta_\ve p_\ve}}\,dx\to\int_\R {v\frac{e^{\eta_0}}{|x|^b}}\,dx$$ as $v$ is compactly supported.
 Combining these estimates we deduce from \eqref{long-integral} that
  $$\int_{\mathbb R} w_\ve \LL_\ve w_\ve \,dx=\lambda_\ve + \frac{2\ve}{p_\ve}(1+o_\ve(1))\int_{\mathbb R} v  {\frac{e^{\eta_0}}{|x|^b}}\,dx+\ve^2 (1+o_\ve(1))\int_{\mathbb R} v\LL_{\frac12,0}v \,dx.$$
Since  $u_\ve$ is a minimizer of \eqref{eq-S}, we get that
  \begin{align*}
\frac{  \displaystyle\int_{\mathbb R} w_\ve\LL_\ve w_\ve \,dx}
{  \displaystyle\left( \int_{\mathbb R} \frac{|w_\ve|^{p_\ve}}{|x|^{\beta_\ve p_\ve}} \,dx \right)^\frac{2}{p_\ve}}  \geq  \frac{  \displaystyle\int_{\mathbb R} u_\ve\LL_\ve u_\ve \,dx}{  \displaystyle\left( \int_{\mathbb R} \frac{(u_\ve)^{p_\ve}}{|x|^{\beta_\ve p_\ve}} \,dx \right)^\frac{2}{p_\ve}} =\frac{\lambda_\ve}{\left( \bar \kappa+o_\ve(1)\right)^{\frac{2}{p_\ve}}}, \end{align*}
 which together with $\lambda_\ve p_\ve\to\bar \kappa$ and  $\ve p_\ve \to a$ leads to
  \begin{align*}
   \frac{1+o_\ve(1)}{\bar \kappa}\int_{\mathbb R}  \frac{|w_\ve|^{p_\ve}}{|x|^{\beta_\ve p_\ve}}\,dx   &\leq \left(\frac{1}{\lambda_\ve}\int_{\mathbb R} w_\ve \LL_\ve w_\ve \,dx\right)^\frac{p_\ve}{2} \\ &=\left(  1+     \frac{2\ve}{\lambda_\ve p_\ve}(1+o_\ve(1))\int_{\mathbb R} v {\frac{e^{\eta_0}}{|x|^b}}\,dx+\frac{\ve^2}{\lambda_\ve} (1+o_\ve(1))\int_{\mathbb R} v\LL_{\frac12,0} v\, dx \right)^\frac{p_\ve}{2} \\ &=\left(  1+     \frac{2 a}{\bar\kappa p_\ve}(1+o_\ve(1))\int_{\mathbb R} v { \frac{e^{\eta_0}}{|x|^b}}\,dx+\frac{a^2}{\bar\kappa p_\ve} (1+o_\ve(1))\int_{\mathbb R} v\LL_{\frac12,0}v \,dx \right)^\frac{p_\ve}{2}.
   \end{align*}
  Taking $\ve\downarrow 0$, and by \eqref{conv-1}, we arrive to
   $$\frac{ 1}{\bar \kappa }\int_{\mathbb R} e^{av} {\frac{e^{\eta_0}}{|x|^b}}\,dx\leq
   \exp\left\{{\frac{a}{\bar \kappa} \int_{\mathbb R}  v {\frac{e^{\eta_0}}{|x|^b}}\,dx+\frac{a^2}{2\bar\kappa}  \int_{\mathbb R} v\LL_{\frac12,0}v \,dx}\right\}.$$
   Replacing $av$ by $v$ we get that
    \begin{align}\label{main-inequality}
    \log\left(\frac{1}{\bar \kappa} \int_{\mathbb R} e^{v} {\frac{e^{\eta_0}}{|x|^b}}\,dx\right)
    \leq  \frac{1}{\bar \kappa} \int_{\mathbb R}  v {\frac{e^{\eta_0}}{|x|^b}}\,dx+\frac{1}{2\bar\kappa}  \int_{\mathbb R} v\LL_{\frac12,0}v \,dx  , \end{align}
     for every $v\in C_c^\infty(\R)$. By density arguments we see that the above inequality holds in the space
   \begin{equation}\label{Xb}
   \mathcal X_b:=\left\{ v\in L^1_{loc}(\R):  \int_{\mathbb R} | v|  {\frac{e^{\eta_0}}{|x|^b}}\,dx+\frac{1}{2\bar\kappa}  \int_{\mathbb R} v\LL_{\frac12,0}v \,dx <\infty\right\}.
   \end{equation}

           Note that if constant functions were  in  $\mathcal X_b$ we would have
          by  taking $v\equiv C_0>>1$ in \eqref{main-inequality} that $$C_0+\log  {\frac{\kappa_0}{\bar\kappa}}\leq C_0 {\frac{\kappa_0}{\bar \kappa}},$$ and hence {$\bar \kappa=\kappa_0$} (as we already know that  {$\bar\kappa\geq\kappa_0$}). Finally, recalling from Theorem \ref{thm-limit-function} that $\kappa=2\pi(1-b)$, Theorem \ref{main-theorem} is proved  taking into account the shift of notation \eqref{shift}, up to the statement that constants are in  $\mathcal X_b$. This will be shown  in Lemma \ref{const}.
            \qed



\begin{lemma}\label{const} Constant functions are in the space $\mathcal X_b$ described  by  \eqref{Xb}.
\end{lemma}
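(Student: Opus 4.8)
The plan is to verify the two defining conditions of \eqref{Xb} directly for a constant function $v\equiv c$, $c\in\R$, and then to supply the one non-routine ingredient that makes inequality \eqref{main-inequality} genuinely applicable to such $v$: an approximation of $c$ by functions in $C_c^\infty(\R)$ whose seminorm tends to $0$. This last point is delicate precisely because in the critical regime $n=2\gamma=1$ the seminorm $\|v\|_{\frac12,0}^2=\int_\R\int_\R\frac{(v(x)-v(y))^2}{|x-y|^{2}}\,dx\,dy$ is invariant under dilations, so the naive truncations $c\,\varphi(\cdot/R)$ do \emph{not} have vanishing seminorm.

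First I would handle the weighted $L^1$ term: $\int_\R|c|\,\frac{e^{\eta_0}}{|x|^b}\,dx=|c|\,\kappa_0$, so it suffices to check $\kappa_0<\infty$. By the shift \eqref{shift} one has $e^{\eta_0}=\varsigma_{\frac12}^{-1}e^{\eta}$, whence $\kappa_0=\varsigma_{\frac12}^{-1}\int_\R|x|^{-b}e^{\eta}\,dx=\varsigma_{\frac12}^{-1}\,2\pi(1-b)<\infty$ by Theorem \ref{thm-limit-function} (this uses $b<1$). For the quadratic term, taking $\alpha=0$ in \eqref{rescaling L} together with $C_{\frac12,0}=0$ shows that $\LL_{\frac12,0}$ is a multiple of $(-\Delta)^{\frac12}$, which annihilates constants; equivalently $\int_\R c\,\LL_{\frac12,0}(c)\,dx=\tfrac12\|c\|_{\frac12,0}^2=\tfrac12\int_\R\int_\R\frac{(c-c)^2}{|x-y|^{2}}\,dx\,dy=0$. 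Hence both quantities in \eqref{Xb} are finite, i.e. $c\in\mathcal X_b$.

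The substantive step is the approximation needed to pass from $C_c^\infty$ to constants in \eqref{main-inequality}. I would take $\psi_R\colon\R\to[0,1]$ to be a mollification of the Lipschitz profile equal to $1$ on $[-R,R]$, equal to $2-\frac{\log|x|}{\log R}$ on $R\le|x|\le R^2$, and equal to $0$ for $|x|\ge R^2$, and set $v_R:=c\,\psi_R\in C_c^\infty(\R)$. Extending the same radial profile to $\R^2$ and using the trace inequality one obtains
\begin{equation*}
\|\psi_R\|_{\frac12,0}^2=\int_\R\int_\R\frac{(\psi_R(x)-\psi_R(y))^2}{|x-y|^{2}}\,dx\,dy\;\le\;\frac{C}{(\log R)^2}\int_{\{R<|x|<R^2\}}\frac{dx}{|x|^{2}}\;=\;\frac{C'}{\log R}\;\xrightarrow{R\to\infty}\;0 .
\end{equation*}
Since $0\le\psi_R\le1$ and $\psi_R\to1$ pointwise, dominated convergence with the integrable majorant $e^{|c|}\,\frac{e^{\eta_0}}{|x|^b}$ (again by $\kappa_0<\infty$) yields $\int_\R e^{v_R}\,\frac{e^{\eta_0}}{|x|^b}\,dx\to e^{c}\kappa_0$ and $\int_\R v_R\,\frac{e^{\eta_0}}{|x|^b}\,dx\to c\,\kappa_0$, while $\frac{1}{2\bar\kappa}\int_\R v_R\,\LL_{\frac12,0}v_R\,dx=\frac{c^2}{4\bar\kappa}\|\psi_R\|_{\frac12,0}^2\to0$. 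Passing to the limit $R\to\infty$ in \eqref{main-inequality} applied to $v_R$ therefore produces \eqref{main-inequality} for the constant $v\equiv c$; combined with $\bar\kappa\ge\kappa_0$ (already known), letting $c\to+\infty$ forces $\bar\kappa=\kappa_0$, which closes the proof of Theorem \ref{main-theorem}.

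I expect the borderline truncation to be the only real obstacle; once $\kappa_0<\infty$ and the vanishing-seminorm approximation are available, everything else reduces to dominated convergence. (The same verification yields the two finiteness conditions for $b<0$ as well, but then the limiting profile $\eta$, and hence $\kappa_0$, need not be finite once $b\le-1$, consistent with Remark \ref{remark-nottrue}.)
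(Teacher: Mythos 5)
Your proof is correct, and you correctly identify the real content of the lemma: it is not the (trivial) finiteness of the two integrals in \eqref{Xb} for a constant, but the existence of an approximating sequence in $C_c^\infty(\R)$ converging to the constant with vanishing $\dot H^{1/2}$-seminorm, so that the inequality \eqref{main-inequality} survives the passage to the limit. Your construction of this sequence differs from the paper's. You use the classical logarithmic cutoff $\psi_R$ equal to $1$ on $B_R$, to $2-\log|x|/\log R$ on the annulus $R\le|x|\le R^2$, and to $0$ beyond, extend it radially to $\R^2$, and invoke the trace inequality to conclude that $\|(-\Delta)^{1/4}\psi_R\|_{L^2(\R)}^2 \le C\|\nabla\Psi_R\|_{L^2(\R^2)}^2=2\pi C/\log R\to0$. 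The paper instead averages a fixed bump at $5^j$-adic scales, setting $\psi_k(x)=\frac1k\sum_{j=1}^k\vp(x/5^j)$, and estimates $\int_\R\psi_k(-\Delta)^{1/2}\psi_k\,dx\le C/k$ directly from the quadratic form, exploiting dilation invariance of the diagonal terms and the off-diagonal bound $\int_{\{|x|\le 2\cdot 5^i\}}|(-\Delta)^{1/2}\vp_j|\,dx\le C\,5^{i-j}$. Both constructions encode the same logarithmic transition; yours is shorter to state but imports the harmonic-extension and trace machinery, whereas the paper's stays intrinsic to $\R$ and is fully self-contained. The remaining steps --- the integrable majorant $e^{|c|}e^{\eta_0}|x|^{-b}$, dominated convergence, passage to the limit in \eqref{main-inequality} --- match the paper's argument. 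One minor notational wrinkle: the middle quantity in your display, $\frac{C}{(\log R)^2}\int_{\{R<|x|<R^2\}}|x|^{-2}\,dx$, is meant as the two-dimensional Dirichlet integral of the radial extension (hence equals $C'/\log R$), but as typeset it reads as a one-dimensional integral over $\R$, which would instead be $O(R^{-1}(\log R)^{-2})$; both vanish, but the intermediate step should make the $\R^2$ context explicit.
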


\begin{proof}
  Let us first construct a sequence of functions $\psi_k\in C_c^\infty(\R)$  such that $\psi_k\to1$ in $C^0_{loc}(\R)$ and $\|(-\D)^\frac14 \psi_k\|_{L^2(\R)}\to0$.
   To this end we fix $\vp\in C_c^\infty(B_2)$ such that $0\leq\vp\leq1$, $\vp\equiv1$ on $B_1$. For $k\geq1$ we set $$\psi_k(x):=\frac1k\sum_{j=1}^k\vp_j(x),\quad \vp_j(x):=\vp\Big(\frac{x}{5 ^j}\Big).$$
   It follows easily that $\psi_k\to1$ in $C^0_{loc}(\R)$. For $|x|\leq\frac{ 5^j}{2}$ we see that $$|(-\D)^\frac12\vp_j(x)|\leq C 5^{-j}.$$  Using integration by parts one obtains
   $$\int_\R|(-\D)^\frac14\psi_k|^2\,dx=\int_\R \psi_k(-\D)^\frac12\psi_k \,dx,\quad \int_\R\vp_i(-\D)^\frac12\vp_j \,dx=\int_\R\vp_j(-\D)^\frac12\vp_i \,dx.$$
    Therefore, as $\|(-\D)^\frac14 \vp _i\|_{L^2(\R)}=\|(-\D)^\frac14 \vp\|_{L^2(\R)}$,
    \begin{align*}  \int_\R \psi_k(-\D)^\frac12\psi_k \,dx&=\frac{1}{k^2}\sum_{i,j=1}^k \int_\R\vp_i(-\D)^\frac12\vp_j \,dx \\&=\frac{1}{k^2}\sum_{i=1}^k \int_\R\vp_i(-\D)^\frac12\vp_i \,dx+\frac{2}{k^2}\sum_{i=1}^{k-1}\sum_{j=i+1}^k \int_\R\vp_i(-\D)^\frac12\vp_j \,dx
    \\&\leq \frac1k \|(-\D)^\frac14 \vp\|^2_{L^2(\R)}+\frac{2}{k^2}\sum_{i=1}^{k-1}\sum_{j=i+1}^k \int_{\{|x|\leq 2(5)^i\}}|(-\D)^\frac12\vp_j(x)| \,dx
    \\ &\leq \frac C k +\frac{C}{k^2}\sum_{i=1}^{k-1}\sum_{j=i+1}^k 5^{i-j}\\&\leq\frac Ck.
    \end{align*}
    Thus,   $\|(-\D)^\frac14 \psi_k\|_{L^2(\R)}\to0$. Also note that by dominated convergence theorem,  $$\int_\R|1-\psi_k|{\frac{e^{\eta_0}}{|x|^b}}\,dx\xrightarrow{k\to\infty}0,$$
and this finishes  the proof of the lemma.
\end{proof}

We conclude the paper by proving Remark \ref{remark-nottrue} from the introduction.

\begin{proof}[Proof of Remark \ref{remark-nottrue}] We fix $b\in(-1,0)$ and  two nonnegative, even  functions $\vp_1\in C_c^\infty(-1,1)$ and $\vp_2\in C_c^\infty(-2,2)$ such that $0\leq \vp_1,\vp_2\leq 1$ and  $$\vp_1\equiv 1\quad\text{on }\left(-\frac34,\frac34\right),\quad \vp_2\equiv 1\quad\text{on }(-1,1). $$
 For $ t>0$ small we set
 \begin{align*}
 \psi_ t(s)=\begin{cases}     1-\vp_2(\frac s t)&\quad\text{if }0\leq s\leq\frac12,\\ \vp_1(s)&\quad\text{if }s\geq\frac12,
 \end{cases}
 \end{align*} and
 $$v_ t(x)=\frac{1}{\sqrt\pi}\left(\log\frac1 t\right)^{-\frac12}\left(  \vp_2\Big(\frac{|x|}{ t}\Big)\log\frac1 t +\psi_ t(|x|)\log\frac{1}{|x|}\right).$$
 It has been shown in \cite[Proposition 2.1]{Hyd-MT} that $$\|(-\D)^\frac14 v_ t\|^2_{L^2(\R)}\leq 1+C\left(\log\frac1 t\right)^{-1}.$$
 Then setting
 $$\tilde v_ t(x)=2(1-b)\sqrt{\pi}\left(\log\frac1 t\right)^\frac12 v_ t(x-1),$$
  and observing that $1-b$ is uniformly bounded (since $b\in(-1,0)$ is fixed),  we see that
  $$\|(-\D)^\frac14 \tilde v_ t\|^2_{L^2(\R)}\leq 4\pi(1-b)^2\log\frac1 t+C.$$
  Moreover, by recalling that  $(-\D)^\frac14  ((-\D)^\frac14 \tilde v_t)= (-\D)^\frac12 \tilde v_t$ and duality we have that
  $$\|(-\D)^\frac14 \tilde v_ t\|^2_{L^2(\R)}=   \int_{\mathbb R} v(-\Delta_{\mathbb R})^{\frac{1}{2}}v \,dx.$$

  Notice that $$\tilde v_ t(x)=2(1-b)\log\frac1 t\quad\text{on }B_ t(1),$$ and therefore,
  $$\int_\R e^{\tilde v_ t}\frac{e^\eta}{|x|^b}\,dx\geq \frac12 e^{\eta(1)}\int_{1- t}^{1+ t} e^{\tilde v_t}\,dx=  e^{\eta(1)} \frac{t }{ t^{2(1-b)}} \geq \frac1C \frac{1}{ t^{1-2b}}.$$  On the other hand, it follows easily that
   $$\int_\R  \tilde v_ t \frac{e^\eta}{|x|^b}\,dx\leq C.$$

      Plugging these estimates in \eqref{conclusion}, and together with $\kappa=2\pi(1-b)$,  we deduce that $$(1-2b)\log\frac1 t\leq C+(1-b)\log\frac1 t,$$  which is impossible for $ t>0$ sufficiently small if $b<0$. \end{proof}

\appendix

\section{Extending the inequality}\label{subsection:extending}
We can extend the range of \eqref{parameter} to
\begin{equation}\label{parameter-modified}
\alpha\leq \beta\leq \alpha+\gamma, \quad \frac{n-2\gamma}{2}<\alpha<n.
\end{equation}
To see this, consider the inversion $\bar x=\frac{x}{|x|^2}$, $\bar y=\frac{y}{|y|^2}$ and make the change of variable, taking into account that $dx=|x|^{2n}d\bar x$, $dy=|y|^{2n}d\bar y$. Another important ingredient is formula \eqref{Kelvin}, that says
 \begin{equation*}
|\bar x-\bar y|=\frac{|x-y|}{|x|\, |y| }.
\end{equation*}
If we  set $u(x)=\bar u\left(\frac{x}{|x|^2}\right)$ then, on the one hand,
\begin{equation*}
\begin{split}\int_{\r^n}\int_{\r^n}\frac{(u(x)-u(y))^2}{|x-y|^{n+2\gamma}|x|^{{\alpha}}|y|^{{\alpha}}}\,dy\,dx&
=\int_{\r^n}\int_{\r^n}\frac{\left(\bar u(\frac{x}{|x|^2})-\bar u(\frac{y}{|y|^2})\right)^2}{|x-y|^{n+2\gamma}|x|^{{\alpha}}|y|^{{\alpha}}}\,dy\,dx\\
&=\int_{\r^n}\int_{\r^n}\frac{\left(\bar u(\bar x)-\bar u(\bar y)\right)^2}{|\bar x-\bar y|^{n+2\gamma}|\bar x|^{{\bar \alpha}}|\bar y|^{{\bar\alpha}}}\,d\bar y\,d\bar x,
\end{split}
\end{equation*}
where we have set
$$\bar \alpha=n-2\gamma-\alpha.$$
On the other hand, if we take
$$\beta p+\bar \beta p =2n,$$
which implies
$$p=\frac{2n}{n-2\gamma+2(\beta-\alpha)}=\frac{2n}{n-2\gamma+2(\bar \beta-\bar\alpha)},$$
 then we have that
\begin{equation*}
\int_{\r^n}\frac{|u(x)|^{p}}{|x|^{{\beta} {p}}}\,dx=\int_{\r^n}\frac{|\bar u(\bar x)|^{p}}{|\bar x|^{{\bar\beta} {p}}}\,d\bar x.
\end{equation*}
Thus, if the inequality is true for $\bar u$, $\bar \alpha$, $\bar \beta$, then it is also valid for $u$, $\alpha$ and $\beta$ in the range \eqref{parameter-modified}.

\end{document}